\theoremstyle{plain}
\newtheorem{theorem}{Theorem}[section]
\newtheorem{corollary}[theorem]{Corollary}
\newtheorem{proposition}[theorem]{Proposition}
\newtheorem{lemma}[theorem]{Lemma}
\newtheorem{remark}[theorem]{Remark}
\numberwithin{equation}{section}
\numberwithin{equation}{section}
\def\XXint#1#2#3{{\setbox0=\hbox{$#1{#2#3}{\int}$}
\vcenter{\hbox{$#2#3$}}\kern-.5\wd0}}
\begin{document}
\title[Classification of bifurcation structure]{Classification of bifurcation structure for semilinear elliptic equations in a ball}
\author{Kenta Kumagai}
\address{Department of Mathematics, Institute of Science Tokyo}
\thanks{This work was supported by JSPS KAKENHI Grant Number 23KJ0949}
\email{kumagai.k.ah@m.titech.ac.jp}
\date{\today}

\begin{abstract}
We consider the Gelfand problem with Sobolev supercritical nonlinearities $f$ in the unit ball. In the case where $f$ is a power type nonlinearity or the exponential nonlinearity, it is well-known that the bifurcation curve has infinitely many turning points when the growth rate of $f$ is smaller than that of the specific nonlinearity (called the Joseph-Lundgren critical nonlinearity), while the bifurcation curve has no turning point when the growth rate of $f$ is greater than or equal to that of the Joseph-Lundgren critical nonlinearity. 

In this paper, we give a new type of nonlinearity $f$ such that the growth rate is greater than or equal to that of the Joseph-Lundgren critical nonlinearity, while the bifurcation curve has infinitely many turning points. This result shows that the bifurcation structure is not determined solely by the comparison between the growth rate of $f$ and that of the Joseph-Lundgren critical nonlinearity. In fact, we 
find a general criterion which determines the bifurcation structure; and give a classification of the bifurcation structure.
\end{abstract}
\keywords{Semilinear elliptic equation, Joseph-Lundgren exponent, Bifurcation diagram, Turning points, Singular solution}
    \subjclass[2020]{Primary 35J61, 35B32; Secondary 35J25, 35B35}

\maketitle

\raggedbottom

\section{Introduction}
Let $N\ge 3$ and $B_1\subset \mathbb{R}^N$  
be the unit ball. We are interested in the global bifurcation diagram of the elliptic equation
\begin{equation}
\label{gelfand}
-\Delta u=\lambda f(u) \hspace{2mm} \text{in } B_1, \hspace{4mm}
u>0 \hspace{2mm}\text{in } B_1, \hspace{4mm}
u=0  \hspace{2mm}\text{on } \partial B_1,
\end{equation}
where $\lambda>0$ is a parameter. Throughout this paper, we suppose without further mentioning the following condition on $f$:
\begin{equation}
\label{asf0}
    \text{$f\in C^2[0,\infty)$}\hspace{4mm}\text{and}\hspace{4mm}\text{$f>0$ in $[0,\infty)$}.
\tag{$f_1$}
\end{equation}
By the symmetric result of Gidas, Ni and Nirenberg \cite{Gidas}, every solution of \eqref{gelfand} is radially symmetric and decreasing. Moreover, it is known by \cite{korman, Mi2015} that 
the set of solutions of \eqref{gelfand} is an unbounded $C^2$ curve 
described by $\{(\lambda(\alpha), u(r, \alpha)) ;\alpha>0\}$, where $\lVert u(r, \alpha) \rVert_{L^{\infty}(B_1)}=u(0,\alpha)=\alpha$ and $\lambda(\alpha)\to 0$ as $\alpha\to 0$. 

A celebrated result of Joseph and Lundgren \cite{JL} states that the growth rate of $f$ and the dimension $N$ make a change to the bifurcation structure of \eqref{gelfand} when $f=(1+u)^p$ or $f=e^u$. More precisely, for the exponential nonlinearity, they showed that the bifurcation curve oscillates around some $\lambda^{*}>0$ and $\lambda(\alpha)\to \lambda^{*}$ as $\alpha\to\infty$ if $3\le N\le 9$. We call the property of the curve Type I. On the contrary, $\lambda(\alpha)$ is monotonically increasing and $\lambda(\alpha)\to\lambda^{*}$ as $\alpha\to\infty$ with some $\lambda^{*}>0$ if $N\ge 10$. We call the property of the curve Type II. For the case $f=(1+u)^p$, they showed that the bifurcation diagram is of Type I if $\frac{N+2}{N-2}<p<p_{\mathrm{JL}}$ and the bifurcation diagram is of Type II if $p\ge p_{\mathrm{JL}}$, where
\begin{equation*}
p_{\mathrm{JL}}=
\begin{cases}
    1+\frac{4}{N-4-2\sqrt{N-1}}&\text{if $N\ge 11$,}\\
    \infty & \text{if $N\le 10$}.
\end{cases}
\end{equation*}
In addition to these types, Miyamoto \cite{Mi2014} proved that when $N\ge 11$ and
$f=(u+\varepsilon)+(u+\varepsilon)^p$ with large $p>p_{\mathrm{JL}}$ and small $\varepsilon>0$, the bifurcation curve exhibits another type such that 
the curve turns finitely many times and $\lambda(\alpha)\to \lambda^{*}$ with some $\lambda^{*}>0$. We call the property of the curve Type III. Here, we remark that bifurcations of Type III can be regarded as variants of bifurcations of Type II. Indeed, if the bifurcation diagram is of Type III with some analytic function $f$ on $[0,\infty)$ satisfying \eqref{asf1} below, then there exists $c_0>0$ so that the bifurcation diagram is of Type II for $f_c(u):=f(u+c)$ with $c>c_0$ (see Proposition \ref{apenprop}).

Motivated from the studies stated above, we arrive at the fundamental question: when the growth rate of $f$ is greater than or equal to that of the Joseph-Lundgren critical nonlinearity (i.e, $f=e^u$ with $N=10$ or $f=(1+u)^{p_{\mathrm{JL}}}$ with $N\ge 11$), does the bifurcation curve always exhibit Type II or Type III? The following result gives a negative answer to this question. 
\begin{theorem}
\label{main1}
Let $N\ge 10$. We assume that $\gamma\in \mathbb{R}$ satisfies 
$\gamma>-\log 2$ if $N\ge 11$.
We define
\begin{equation*}
    f_{N,\gamma}(u):=
    \begin{cases}
        e^u(1+u)^{\gamma} &\text{if $N=10$,}\\
    (1+u)^{p_{\mathrm{JL}}}\left(1+\frac{\gamma}{\log (2+u)}\right)  &\text{if $N\ge 11$.}
    \end{cases}
\end{equation*}
Then, the bifurcation diagram of \eqref{gelfand} is of 
\begin{enumerate}
    \item[{(i)}] Type I if $\gamma>\frac{1}{4\sqrt{N-1}}$;
    \item[{(ii)}] Type II or Type III if $\gamma<\frac{1}{4\sqrt{N-1}}$.
\end{enumerate}
In addition, in the case $\gamma<\frac{1}{4\sqrt{N-1}}$, there exists $c_0\ge 0$ such that the bifurcation diagram of \eqref{gelfand} is of Type II when $f(u)=f_{N,\gamma}(u+c)$ and $c>c_0$. Moreover, in the case $\gamma>\frac{1}{4\sqrt{N-1}}$, the bifurcation diagram of \eqref{gelfand} is always of Type I when $f(u)=f_{N,\gamma}(u+c)$ with $c\in\mathbb{R}$ satisfies $f(0)>0$.
\end{theorem}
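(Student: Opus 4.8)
The plan is to reduce the statement to the asymptotics of a singular solution $u^{*}$ of \eqref{gelfand} together with the classical dictionary between the shape of the branch and the stability of $u^{*}$. For the first ingredient I would pass to Emden--Fowler variables: with $\tau=\log(1/r)$ and $u(r)=2\tau+\psi(\tau)$ when $N=10$, resp.\ $1+u(r)=r^{-m}v(\tau)$ with $m=\tfrac{2}{p_{\mathrm{JL}}-1}$ when $N\ge 11$, equation \eqref{gelfand} becomes a second order autonomous Joseph--Lundgren equation perturbed by a term varying slowly in $\tau$ (entering through $(1+u)^{\gamma}=(2\tau)^{\gamma}(1+O(\tau^{-1}))$, resp.\ $1+\tfrac{\gamma}{\log(2+u)}=1+\tfrac{\gamma}{m\tau}+O(\tau^{-2})$). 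I would then construct a singular solution $u^{*}$ at a unique $\lambda^{*}=\lambda^{*}(N,\gamma)>0$, with expansion $u^{*}(r)=2\tau-\gamma\log(2\tau)+w_{0}+o(1)$ (and an analogue for $N\ge 11$), carried far enough that --- after some bookkeeping in which the $(\log(1/r))^{-1}$ and $(\log(1/r))^{-2}\log\log(1/r)$ contributions cancel identically --- one obtains
\[
\lambda^{*}f_{N,\gamma}'(u^{*}(r))\,r^{2}=\frac{(N-2)^{2}}{4}+\frac{\kappa_{N}(\gamma)}{(\log(1/r))^{2}}+o\!\left(\frac{1}{(\log(1/r))^{2}}\right)\qquad(r\to 0).
\]
The leading coefficient is exactly the Hardy constant precisely because $f_{N,\gamma}$ has Joseph--Lundgren critical growth; $\kappa_{N}(\gamma)$ is an explicit function with $\kappa_{N}(0)=0$ that is independent of $\lambda^{*}$ (equivalently of $w_{0}$), and $\kappa_{N}(\gamma)>\tfrac14$ iff $\gamma>\tfrac{1}{4\sqrt{N-1}}$ (for $N=10$ one finds $\kappa_{10}(\gamma)=3\gamma$). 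Finally, by the a priori estimates and monotonicity arguments standard for \eqref{gelfand}, I would show $u(\cdot,\alpha)\to u^{*}$ locally uniformly in $\overline{B_{1}}\setminus\{0\}$ and $\lambda(\alpha)\to\lambda^{*}$ as $\alpha\to\infty$.

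For the second ingredient I would invoke the well-known equivalence (Joseph--Lundgren, with refinements of Dancer, Merle--Peletier and Miyamoto): the branch oscillates around $\lambda^{*}$ with infinitely many turning points --- Type I --- iff the quadratic form $\phi\mapsto\int_{B_{1}}(|\nabla\phi|^{2}-\lambda^{*}f_{N,\gamma}'(u^{*})\phi^{2})$ is unbounded below on $C_{c}^{\infty}(B_{1})$; otherwise the number of turning points is finite, which together with $\lambda(\alpha)\to\lambda^{*}$ is exactly Type II or Type III. As the $r^{-2}$-coefficient of the potential equals the Hardy constant, the improved (logarithmic) Hardy inequality, with its sharp constant $\tfrac14$, shows that the form is unbounded below iff $\kappa_{N}(\gamma)>\tfrac14$ and bounded below iff $\kappa_{N}(\gamma)<\tfrac14$; this gives (i) and (ii), the borderline $\gamma=\tfrac{1}{4\sqrt{N-1}}$ being excluded since there $\kappa_{N}(\gamma)=\tfrac14$ and one would need the next order of the expansion.

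For the final two assertions: if $\gamma<\tfrac{1}{4\sqrt{N-1}}$ then $f_{N,\gamma}$ is analytic and positive on $[0,\infty)$ --- here the hypothesis $\gamma>-\log 2$ for $N\ge 11$ is exactly what makes $1+\tfrac{\gamma}{\log(2+u)}>0$ there --- and satisfies \eqref{asf1}; if its diagram is of Type III, Proposition \ref{apenprop} provides $c_{0}>0$ with $f_{N,\gamma}(\cdot+c)$ of Type II for $c>c_{0}$, and if it is already of Type II we take $c_{0}=0$, so such a $c_{0}\ge 0$ exists. If $\gamma>\tfrac{1}{4\sqrt{N-1}}$ and $f(u)=f_{N,\gamma}(u+c)$ with $f(0)>0$, then $w:=u+c$ carries solutions of $-\Delta u=\lambda f(u)$ to solutions of $-\Delta w=\lambda f_{N,\gamma}(w)$; hence the singular solution $\tilde u^{*}$ of the shifted problem satisfies $\tilde u^{*}+c=w^{*}$ for a singular solution $w^{*}$ of the unshifted equation, so $\lambda^{*}f'(\tilde u^{*})=\lambda^{*}f_{N,\gamma}'(w^{*})$, whose near-origin expansion is of the displayed form with the same $\kappa_{N}(\gamma)$ (the shift only moves the additive constant $w_{0}$); since $\gamma>\tfrac{1}{4\sqrt{N-1}}$, the form is again unbounded below and the diagram is of Type I.

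The main obstacle is the refined asymptotic analysis behind the displayed expansion: one is exactly on the borderline of the classical criterion, so the $r^{-2}$-coefficient of the linearised potential is pinned to the Hardy constant and the decisive information lives in the $r^{-2}(\log(1/r))^{-2}$ term. Thus $u^{*}$ must be expanded through several orders (including $\log\log(1/r)$-type terms), the cancellation of the $(\log(1/r))^{-1}$ and $(\log(1/r))^{-2}\log\log(1/r)$ parts and the $\lambda^{*}$-independence of $\kappa_{N}(\gamma)$ must be checked, and $\kappa_{N}(\gamma)$ must be computed for both the $N=10$ and the $N\ge 11$ nonlinearities and compared with $\tfrac14$; this is exactly where the special forms of $f_{N,\gamma}$ and the number $\tfrac{1}{4\sqrt{N-1}}$ come in.
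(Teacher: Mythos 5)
Your first ingredient is essentially the paper's own: the decisive object is the refined expansion
\begin{equation*}
\lambda_{*}f'(U_{*})=\frac{(N-2)^{2}}{4r^{2}}+\frac{\kappa_{N}(\gamma)+o(1)}{r^{2}(\log r)^{2}},\qquad \kappa_{N}(\gamma)=\gamma\sqrt{N-1}\ \ (\text{so }\kappa_{10}=3\gamma),
\end{equation*}
which is Theorem \ref{singularthm} specialized to $k=2$ (the paper reaches it through the transformation of \cite{Mi2023} and a fixed point argument, and verifies that $f_{N,\gamma}$ satisfies \eqref{asF1} with $k=2$ via Lemmas \ref{yoilem}--\ref{examplelem}, rather than by a matched multi-order expansion, but the target and the constant are the same), and the comparison with the sharp constant $\tfrac14$ of the critical Hardy inequality (Proposition \ref{criticalprop}) is also how the paper decides stability versus instability of $U_{*}$.

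The genuine gap is your second ingredient. There is no ``well-known equivalence'' asserting that boundedness below of $Q_{U_{*}}$ forces finitely many turning points: the paper itself recalls that the identification of $m(U_{*})$ with the number of turning points is only an expectation of Miyamoto \cite{Mi2014}, and Brezis--V\'azquez \cite{Br} covers only the stable case $m(U_{*})=0$ after a shift making $f$ nondecreasing and convex. Concretely, (a) for the Type I direction the paper does not use unboundedness below on $C^{\infty}_{c}(B_{1})$ as such; it needs instability of $V$ on annuli $B_{r_{n}}\setminus B_{r_{n+1}}$ (Corollary \ref{usecor}, built into the optimizers of Proposition \ref{criticalprop}), so that the intersection Lemma \ref{interlem} applies to the regular solutions and, via the argument of \cite[Lemma 5]{Mi2015}, yields infinitely many crossings of $\lambda(\alpha)$ with $\lambda_{*}$; and (b), more seriously, in the regime $\kappa_{N}(\gamma)<\tfrac14$ finiteness of the Morse index does not by itself bound the number of turning points. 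The paper must prove Proposition \ref{mainprop} --- the weighted limit $w_{n}=|x|^{(N-2)/2}\dot u_{n}$, the refined expansion, the critical Hardy inequality and a Green/Korman identity combine to show $\lambda'(\alpha)\neq 0$ for all large $\alpha$ --- and then use the separation Lemma \ref{separationlem}, Brezis--V\'azquez after translation, and the analyticity of $f$ to exclude accumulation of critical points of $\lambda(\alpha)$ at finite $\alpha$; this is where most of the work of Theorem \ref{mainthms} lies, and your plan has no substitute for it. A smaller flaw: for the shift statement in the case $\gamma<\frac{1}{4\sqrt{N-1}}$ you take $c_{0}=0$ when the diagram of $f_{N,\gamma}$ is already of Type II, but Type II for $f_{N,\gamma}$ does not by itself give Type II for every translate $f_{N,\gamma}(\cdot+c)$; the paper instead chooses $c$ beyond the level where $U_{*}$ is stable near the origin and applies \cite{Br} directly, which settles both cases at once. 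Your treatment of the shifted Type I statement (translation leaves $\kappa_{N}(\gamma)$, equivalently \eqref{asF1}, unchanged) is in line with the paper.
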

The bifurcation structure of \eqref{gelfand} has been studied for various Sobolev supercritical nonlinearities $f$ such as $f=u^{p}+o(u^{p})$ with $p>\frac{N+2}{N-2}$, $f=e^u+o(e^u)$,
$f=e^{u^p}$ with $p>0$, $f=e^{e^u}$ and so on.
We refer to \cite{Dan,KiWei,Dancer-e, Marius,Mi2014,Mi2015} for \eqref{gelfand} and \cite{Nor,Flore,Guowei,chend} for related problems. As a result, Miyamoto \cite{Mi2018}, and Miyamoto and Naito \cite{Mi2024} unified these works by considering the following assumption
\begin{equation}
\label{asf1}
F(u):=\int_{u}^{\infty}\frac{ds}{f(s)}<\infty \hspace{4mm}\text{and}\hspace{4mm}q:=\lim_{u\to\infty}\frac{f'^2(u)}{f(u)f''(u)}<\frac{N+2}{4}.
\tag{$f_2$}
\end{equation}
If \eqref{asf0} and \eqref{asf1} hold, we deduce from Lemma \ref{convexlem} and L'Hospital's rule that $q\ge 1$, $f'(u), f''(u)>0$ for $u>u_0$ with some $u_0>0$, and 
\begin{equation}
\label{sim1}
   \lim_{u\to\infty} F(u)f'(u)=\lim_{u\to\infty} \frac{(F(u))'}{(1/f'(u))'}=q. 
\end{equation}
Moreover, we mention that the H\"older conjugate of the ratio $q$ measures a variant of the growth rate of $f$. Indeed, we can easily confirm that $q=\frac{p}{p-1}$ if $f=(1+u)^p$ and $q=1$ if $f=e^{u}$. In addition, by Lemma \ref{convexlem} and L'Hospital's rule, we obtain  
\begin{equation*}
    \lim_{u\to\infty}\frac{f(u)}{f'(u)u}=\lim_{u\to\infty}\frac{f(u)/f'(u)}{u}=\lim_{u\to\infty}\left(1-\frac{f(u) f''(u)}{f'^2(u)}\right)=\frac{q-1}{q};
\end{equation*}
\begin{equation*}
    \lim_{u\to\infty}\frac{\log f(u)}{\log f'(u)}=\lim_{u\to\infty}\frac{f'(u)/f(u)}{f''(u)/f'(u)}=\lim_{u\to\infty}\frac{f'^2(u)}{f(u)f''(u)}=q.
\end{equation*}
On the other hand, we remark that the finer growth rate cannot be measured by using $q$. Indeed, we can confirm that $q=1$ in the case $f=e^{u^p}$ or $f=e^{e^u}$ with $p>0$, and $q=\frac{p}{p-1}$ in the case $f=(1+u)^{p}(\log (2+u))^\tau$ with $\tau>0$. In particular, when $f=f_{N, \gamma}$ defined in Theorem \ref{main1}, we have $q=q_{\mathrm{JL}}$ for any $\gamma\in \mathbb{R}$. 

By focusing on the ratio $q$, Miyamoto \cite{Mi2018} proved that the bifurcation diagram is always of Type I when $q_{\mathrm{JL}}<q$, where 
\begin{equation}
\label{qjl}
    q_{\mathrm{JL}}=\frac{N-2\sqrt{N-1}}{4}.
\end{equation}
In addition, when $q<q_{\mathrm{JL}}$, Miyamoto and Naito \cite{Mi2024} proved that there exists $c_0>0$ such that the bifurcation diagram is of Type II with $f_c=f(u+c)$ for any $c>c_0$. These results imply that the bifurcation structure is solely determined by the ratio $q$ (except for the difference with respect to the translation of $f$) in the case $q\neq q_{\mathrm{JL}}$; and the question stated above is true for the case $q\neq q_{\mathrm{JL}}$.

On the other hand, for the case $q=q_\mathrm{JL}$, there are few results in the literature clarifying the bifurcation structure of \eqref{gelfand}. For the special case where $f=e^u-1$ or $f=8\exp[2e^u-u-2]+\eta\exp[2e^u-2u-2]$ with $0\le \eta\le 4$, the bifurcation structure is clarified by \cite{chend} and \cite{Mi2024}, respectively. However, to the best of the author's knowledge, the bifurcation structure has not been clarified except for the above special nonlinearities. 

Theorem \ref{main1} tells us that in contrast to the case $q\neq q_{\mathrm{JL}}$, the bifurcation curves exhibit essentially different types depending on the perturbation of $f$ for the case $q=q_{\mathrm{JL}}$. Moreover, Theorem \ref{main1} shows that the bifurcation structure is not determined by the comparison of the growth rate of $f$ and that of the Joseph-Lundgren critical nonlinearity for the case $q=q_{\mathrm{JL}}$. 

The main result stated below shows that the following quantity
\begin{equation}
\label{asF1}
(F(u)f'(u)-q_{\mathrm{JL}})(-\log F(u))^{k}\to \gamma \hspace{4mm}\text{as $u\to\infty$}
\tag{$F_1$}
\end{equation}
is a key factor in determining the bifurcation structure. Moreover, the result
contains Theorem \ref{main1} as a special case.


\begin{theorem}
\label{mainthms}
Let $N\ge 10$, $\gamma\in \mathbb{R}$ and $0<k\le 2$. We assume \eqref{asf0}, \eqref{asf1} and \eqref{asF1},
where $q_\mathrm{JL}$ is that in \eqref{qjl}. Then, the bifurcation diagram of \eqref{gelfand} is of 
\begin{itemize}
    \item[{(i)}] Type I if 
    \begin{equation}
       \text{$0<k<2$ and $\gamma>0$, \hspace{2mm} or \hspace{2mm} $k=2$ and $\gamma>\frac{1}{4\sqrt{N-1}}$;} 
    \tag{A}
    \end{equation}
    \item[{(ii)}] Type II or Type III if $f$ can be extended analytically on $(-\delta,\infty)$ with some $\delta>0$ and
    \begin{equation}
        \text{$0<k<2$ and $\gamma<0$, \hspace{2mm} or \hspace{2mm} $k=2$ and $\gamma<\frac{1}{4\sqrt{N-1}}$.}
        \tag{B}
    \end{equation}
\end{itemize}
In addition, when $(B)$ is satisfied, there exists $c_0\ge 0$ such that the bifurcation diagram of \eqref{gelfand} is of Type II with $f_{c}(u)=f(u+c)$ for all $c>c_0$. Moreover, the bifurcation curve does not turn if $\alpha:=\lVert u \rVert_{L^{\infty}(B_1)}$ is sufficiently large.
\end{theorem}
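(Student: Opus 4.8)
The plan is to reduce the analysis of the bifurcation curve to the study of the singular solution and the linearized operator around it, following the now-standard strategy for the Gelfand problem. First I would recall that under \eqref{asf0} and \eqref{asf1} there is a singular solution $u^{*}(r)$ of $-\Delta u=\lambda^{*}f(u)$ on $B_1$ with $u^{*}(r)\to\infty$ as $r\to 0$, obtained via the Emden--Fowler type change of variables $t=-\log r$, $w(t)=$ (a suitable normalization of $u$ built from $F$). The key quantitative input is the asymptotic expansion of $u^{*}$ and of the coefficient of the linearized equation near $r=0$: using \eqref{sim1} one gets $F(u^{*}(r))\sim c\, r^{2}$, and then \eqref{asF1} controls the \emph{next-order} term, namely the deviation of $F(u^{*})f'(u^{*})$ from $q_{\mathrm{JL}}$, which enters at order $(\log(1/r))^{-k}$. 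Substituting into the linearized operator $-\Delta \varphi = \lambda^{*} f'(u^{*})\varphi$ and separating variables, the radial part becomes an ODE whose behaviour at $t=-\log r\to\infty$ is that of an Euler equation with indicial roots $\frac{N-2}{2}\pm\sqrt{\bigl(\frac{N-2}{2}\bigr)^{2}-\lambda^{*}f'(u^{*})r^{2}}$; the borderline case $q=q_{\mathrm{JL}}$ is exactly when the two roots collide, so the sign and size of the $(\log 1/r)^{-k}$ correction decides whether the solutions of the linearized ODE oscillate (infinitely many zeros, forcing infinitely many turning points, Type I) or are eventually monotone (Type II/III).

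Concretely, the main steps in order are: (1) perform the change of variables and establish existence and the two-term asymptotics of $u^{*}$, showing $(F(u^{*}(r))f'(u^{*}(r))-q_{\mathrm{JL}})(2\log(1/r))^{k}\to\gamma$ as $r\to0$, i.e. transporting hypothesis \eqref{asF1} from the $u$-variable to the $r$-variable via $-\log F(u^{*})\sim 2\log(1/r)$; (2) write the linearized equation in the $t$ variable as $\psi''+ (\text{lower order})\psi' + \bigl(q_{\mathrm{JL}} + \gamma_{\mathrm{eff}}(t)\bigr)\psi=0$ with $\gamma_{\mathrm{eff}}(t)\sim \gamma\,t^{-k}$ (up to the explicit constant $2^{-k}$), after absorbing the Euler scaling $\psi=e^{\frac{N-2}{2}t}\phi$; (3) invoke the classical oscillation/non-oscillation theory for such equations — a perturbed Euler equation with a critical constant term is oscillatory iff the perturbation exceeds the Kneser--Hille threshold $\frac{1}{4t^{2}}$, and is non-oscillatory otherwise. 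This is precisely where the dichotomy $k<2$ versus $k=2$, and for $k=2$ the comparison $\gamma \lessgtr \frac{1}{4\sqrt{N-1}}$, comes from: for $k<2$ the term $\gamma t^{-k}$ dominates $\frac{1}{4t^{2}}$ if $\gamma>0$ and is dominated by it if $\gamma<0$; for $k=2$ one compares $2^{-2}\gamma$ against $\frac14$ after tracking the constant $\sqrt{N-1}$ that appears from the reduction. (4) Translate oscillation of the linearized equation into intersection number of $u(\cdot,\alpha)$ with $u^{*}$, and use the known fact (Miyamoto, and the curve being $C^{2}$ and unbounded) that intersection number jumps exactly at turning points, concluding Type I in case (A) and finitely many turnings in case (B); the analyticity hypothesis in (B) then upgrades ``finitely many turnings'' to the Type II/III trichotomy, and the stated $c_{0}$-statement follows by applying Proposition \ref{apenprop}, exactly as for the $q\neq q_{\mathrm{JL}}$ case in \cite{Mi2018,Mi2024}.

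I expect step (1) — the two-term asymptotics of the singular solution — to be the main obstacle, since one must pass from the crude estimate $F(u^{*})\sim c r^{2}$ to a sufficiently precise second-order expansion that the $(\log 1/r)^{-k}$ correction in the linearized coefficient is captured with the correct leading constant; a naive ODE comparison loses exactly this term. Concretely one needs to show that the error between $u^{*}$ and the formal singular profile contributes only at order $o(t^{-k})$ in $\gamma_{\mathrm{eff}}(t)$, which requires a careful bootstrap in the $t=-\log r$ variable using \eqref{asf1} and \eqref{asF1}; the constraint $0<k\le 2$ (and the sign/size conditions on $\gamma$) is presumably dictated by what this bootstrap can afford, since for $k>2$ the correction would be swallowed by the generic second-order term of the singular solution and the criterion would no longer be sharp. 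The remaining steps are either classical ODE oscillation theory or adaptations of arguments already available in the references, so the novelty and the difficulty are concentrated in making \eqref{asF1} the exact bridge between the nonlinearity and the indicial analysis.
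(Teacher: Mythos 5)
Your steps (1)--(3) and your treatment of case (A) follow, in different clothing, the same route as the paper: the crux is indeed the two-term expansion of the linearized coefficient, which the paper proves as $\lambda_{*}f'(U_{*})=\frac{(N-2)^2}{4r^2}+\frac{\gamma\sqrt{N-1}+o(1)}{2^{k-2}r^2(-\log r)^{k}}$ via the transformation $t=-\log r$ and a fixed-point argument in weighted spaces (Theorem \ref{asythm}), and the stability dichotomy is then decided not by Kneser--Hille oscillation theory but by the critical Hardy inequality with the sharp logarithmic remainder and explicit annular test functions (Proposition \ref{criticalprop}); instability on a sequence of annuli plus the intersection lemma (Lemma \ref{interlem}) and an argument as in \cite[Lemma 5]{Mi2015} then yields Type I. Your oscillation-theoretic variant of this part is a legitimate, essentially equivalent alternative, and you correctly identify both the threshold $\gamma\sqrt{N-1}\gtrless\frac14$ for $k=2$ and the fact that extracting the second-order constant is the delicate analytic step.

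The genuine gap is in case (B). You conclude ``finitely many turnings'' from non-oscillation of the linearized equation together with ``the known fact that intersection number jumps exactly at turning points.'' No such fact is available at this level of generality: the assertion that the Morse index of $U_{*}$ (equivalently, the eventual intersection count with the singular solution) equals the number of turning points is precisely Miyamoto's \emph{expectation}, not a theorem, and finite Morse index of $U_{*}$ by itself does not exclude infinitely many turning points accumulating as $\alpha\to\infty$. This is where most of the paper's work in case (B) lies: Proposition \ref{mainprop} proves directly that $\lambda'(\alpha)\neq 0$ for all large $\alpha$, via a compactness argument for the normalized derivative $w_n=|x|^{\frac{N-2}{2}}\dot u_n/\lVert |x|^{\frac{N-2}{2}}\dot u_n\rVert_{L^2(B_1^2)}$, which in turn needs the separation lemma (Lemma \ref{separationlem}, itself resting on Brezis--V\'azquez), the refined upper bound $\lambda_n f'(u_n)\le \frac{(N-2)^2}{4}|x|^{-2}+\frac{1-\delta}{4}|x\log |x||^{-2}+C$ supplied by Theorem \ref{asythm}, the critical Hardy inequality in a two-dimensional setting, and a Green/Pohozaev identity producing a sign contradiction between $\ddot\lambda_n\mathcal{A}_n$ and $\mathcal{B}_n$. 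Analyticity only prevents accumulation of critical points of $\lambda$ on bounded $\alpha$-intervals; it cannot substitute for the large-$\alpha$ statement. Finally, deriving the $c_0$-statement from Proposition \ref{apenprop} is circular here: that proposition takes Type III (finitely many turns) as input, which is exactly what your argument has not established, and it does not cover the case where the diagram for $f$ is already Type II; the paper instead obtains Type II for $f_c$ directly from the stability of $U_{*}$ in a small ball, $U_{*}\in H^1_0$, the monotonicity and convexity of $f_c$ from Lemma \ref{convexlem}, and Brezis--V\'azquez, independently of the type of the diagram for $f$ itself.
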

\begin{remark}
\label{Hito}
\rm{Since $F(u)\to 0$ as $u\to\infty$ and \eqref{sim1} is satisfied,
we have $q=q_{\mathrm{JL}}$ under the assumptions of Theorem \ref{mainthms}, where $q$ is that in \eqref{asf1}.  
Moreover, we remark that the analyticity of $f$ on $(-\delta,\infty)$ is used only to ensure that the critical points of $\lambda(\alpha)$ are finite on any bounded interval. The analyticity condition is used widely in the field. We refer to \cite{Mi2014, Dan, Figalli, kuma2025, kata}} for instance.
\end{remark}
As mentioned above, the bifurcation structure is essentially determined only by $q$ when $q\neq q_{\mathrm{JL}}$. Theorem \ref{mainthms} shows that the bifurcation structure is essentially determined not only by $q$, but also by the asymptotic behavior of $Ff'-q$ when $q=q_{\mathrm{JL}}$. We also mention that the bifurcations of Type I have not been confirmed in the literature for the case $q=q_{\mathrm{JL}}$ to the best of the author's knowledge. Theorem \ref{mainthms} shows that the bifurcations of Type I are common phenomena even for the case $q=q_{\mathrm{JL}}$.
In fact, as a corollary of Theorem \ref{mainthms}, we obtain
\begin{corollary}
\label{maincor}
The bifurcation diagram of \eqref{gelfand} is of
\begin{itemize}
\item[{(i)}] Type I if $N=10$ and
    $f$ is one of the following 
    \begin{equation*}
\text{$f=e^{u+\beta (1+u)^{\theta}}$,\hspace{2mm} $f=e^{(1+u)^{\theta}}$,\hspace{2mm} $f=e^{u}(1+u)^{\beta+\frac{1}{12}}$\hspace{2mm} or \hspace{2mm} $f=e^{(\log (1+u))^{\tau}}$}
\end{equation*}
with some $0<\theta<1$, $\beta>0$ and $\tau>1$.
\item[{(ii)}] Type II or Type III if $N=10$ and
$f$ is one of the following
\begin{equation*}
\text{$f=e^{u+\beta (1+u)^{\theta}}$, \hspace{2mm}$f=e^{(1+u)^{\tau}}$,\hspace{2mm} $f=e^{u}(1+u)^{\beta +\frac{1}{12}}$ or \hspace{2mm} $f=e^{e^u}$}
\end{equation*}
with $0<\theta<1$, $\beta<0$ and $\tau>1$.
\item[{(iii)}] Type I if $f=(1+u)^{p_{\mathrm{JL}}}\zeta(u)$ with $N\ge 11$ and $\zeta$ is one of the following
\begin{equation*}
\text{$\zeta=\frac{\alpha+\log (u+2)}{\log (u+2)}$, $\zeta=\frac{\beta+(\log (u+2))^{\theta}}{(\log (u+2))^{\theta}}$, $\zeta=e^{\tau (\log (u+1))^{\theta}}$ or $\zeta=(\log (2+u))^{\tau}$}
\end{equation*}
with $\alpha>\frac{1}{4\sqrt{N-1}}$, $0<\theta<1$, $\beta>0$ and $\tau<0$.
\item[(iv)] Type II or Type III if $f=(1+u)^{p_{\mathrm{JL}}}\zeta(u)$ with $N\ge 11$ and $\zeta$ is one of 
\begin{equation*}
\text{$\zeta=\frac{\alpha+\log (u+2)}{\log (u+2)}$, $\zeta=\frac{\beta+(\log (u+2))^\theta}{(\log (u+2))^\theta}$,  $\zeta=e^{\tau(\log (u+1))^\theta}$ or $\zeta=(\log (2+u))^{\tau}$}
\end{equation*}
with $\alpha<\frac{1}{4\sqrt{N-1}}$, $0<\theta<1$, $\beta<0$ and $\tau>0$, and $\zeta$ satisfies $\zeta(0)>0$.
\end{itemize}
\end{corollary}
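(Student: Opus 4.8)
The plan is to verify that each listed nonlinearity falls under the hypotheses of Theorem \ref{mainthms} by computing the two relevant quantities: the ratio $q$ (which must equal $q_{\mathrm{JL}}$, automatic once \eqref{asf1} holds and $F(u)\to 0$ — see Remark \ref{Hito}), and, more importantly, the asymptotic behavior encoded in \eqref{asF1}, namely the exponent $k$ and the limit $\gamma$ governing $(F(u)f'(u)-q_{\mathrm{JL}})(-\log F(u))^k$. Once $k$ and $\gamma$ are identified for each $f$, the classification into Type I versus Type II/III is read off directly from conditions (A) and (B) of Theorem \ref{mainthms}; the analyticity on $(-\delta,\infty)$ required in case (B) is evident for each listed function (compositions of $\exp$, powers, and $\log$ of affine arguments with positive constant terms), and the hypothesis $f(0)>0$ (or $\zeta(0)>0$) is imposed precisely so that $f$ extends past $0$.

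First I would carry out the core asymptotic computation for the model families. For the $N=10$ cases the controlling feature is the behavior of $f'/f$ and $f''/f'$: writing $f=e^{g(u)}$ one has $f'/f=g'$, and $F(u)\sim 1/f'(u)$ with a correction, so $F f' - 1 = F f' - q_{\mathrm{JL}}$ is governed by the next-order term in the expansion of $F(u)f'(u)$, which in turn is controlled by $g''/g'^2$. For instance, when $f=e^u(1+u)^{\beta+\frac1{12}}$, since $q_{\mathrm{JL}}=1$ for $N=10$, one gets $g'(u)=1+\frac{\beta+1/12}{1+u}$, and a careful L'Hospital/Taylor expansion of $F(u)f'(u)$ (as in \eqref{sim1}) shows $Ff'-1\sim \frac{\beta+1/12}{u}$ while $-\log F(u)\sim u$, giving $k=1$ and $\gamma=\beta+\frac1{12}$; hence Type I iff $\beta+\frac1{12}>0$, consistent with the exponent $\frac1{12}=\frac1{4\sqrt{9}}$ and the two sub-bullets. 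For $f=e^{(1+u)^\theta}$ with $0<\theta<1$ one finds $g'(u)=\theta(1+u)^{\theta-1}\to 0$, so $Ff'-1$ decays and $-\log F\sim (1+u)^\theta$; the sign of the relevant correction is positive, landing in case (A) with $0<k<2,\ \gamma>0$, while $f=e^{(1+u)^\tau}$ with $\tau>1$ makes $g'\to\infty$ and the sign flips to case (B). The functions $f=e^{(\log(1+u))^\tau}$ ($\tau>1$, Type I) and $f=e^{e^u}$ (Type II/III) are handled the same way, as is $f=e^{u+\beta(1+u)^\theta}$, where the perturbation $\beta(1+u)^\theta$ with $0<\theta<1$ produces $Ff'-1\sim c\beta(1+u)^{\theta-1}$ and $-\log F\sim u$, so $k=1$ and $\operatorname{sgn}\gamma=\operatorname{sgn}\beta$.

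Next I would treat the $N\ge 11$ cases, where $q_{\mathrm{JL}}=\frac{N-2\sqrt{N-1}}4<p_{\mathrm{JL}}/(p_{\mathrm{JL}}-1)$ and the base nonlinearity is $(1+u)^{p_{\mathrm{JL}}}$, for which $Ff'\equiv q_{\mathrm{JL}}$ exactly; thus for $f=(1+u)^{p_{\mathrm{JL}}}\zeta(u)$ the deviation $Ff'-q_{\mathrm{JL}}$ is driven entirely by the slowly varying factor $\zeta$. The key sub-computation is: if $\zeta(u)=1+\frac{\gamma}{\log(2+u)}+o\big(\tfrac1{\log u}\big)$ then $-\log F(u)\sim \log u$ (since $F(u)\sim c(1+u)^{1-p_{\mathrm{JL}}}$), and one shows $(Ff'-q_{\mathrm{JL}})\log u\to c_N\gamma$ for an explicit positive constant $c_N$ chosen so that the borderline value is exactly $\frac1{4\sqrt{N-1}}$; this is precisely the content needed to recover Theorem \ref{main1}(with $k=2$) and accounts for the first two $\zeta$'s in bullets (iii)–(iv) (the case $\zeta=\frac{\alpha+\log(u+2)}{\log(u+2)}=1+\frac{\alpha}{\log(u+2)}$ giving $\gamma=\alpha$ and the borderline $\frac1{4\sqrt{N-1}}$, and $\zeta=\frac{\beta+(\log(u+2))^\theta}{(\log(u+2))^\theta}=1+\beta(\log(u+2))^{-\theta}$ giving the faster-than-$k{=}2$ regime $0<k<2$, $\operatorname{sgn}\gamma=\operatorname{sgn}\beta$). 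For $\zeta=e^{\tau(\log(u+1))^\theta}$ and $\zeta=(\log(2+u))^\tau$ one has $\log\zeta$ growing like $(\log u)^\theta$ or $\tau\log\log u$, so $\zeta'/\zeta\to 0$ and $Ff'-q_{\mathrm{JL}}$ is of smaller order than $1/\log u$, placing these in the $0<k<2$ regime with $\gamma$ of sign $\operatorname{sgn}\tau$ for $(\log(2+u))^\tau$ and $\operatorname{sgn}\tau$ for the stretched-exponential case — matching $\tau<0$ in (iii) and $\tau>0$ in (iv).

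The main obstacle I anticipate is \textbf{bookkeeping the next-order asymptotics of $F(u)f'(u)$ precisely enough}: the leading term is always $q_{\mathrm{JL}}$ by \eqref{sim1}, so everything hinges on the first correction, and extracting its exact coefficient (not just its sign) requires iterating L'Hospital's rule or, more cleanly, writing $F(u)f'(u)-q = \int_u^\infty \big(f'(u)/f(s) - \text{const}\big)\,ds$-type identities and expanding the integrand — and one must be careful that the correction is genuinely of the claimed order $(-\log F)^{-k}$ and not swamped by a different term. A secondary, lighter task is confirming in each case that $F(u)<\infty$ (needed for \eqref{asf1}): this fails for no listed function since all have at least exponential growth, but for $f=(\log(2+u))^\tau(1+u)^{p_{\mathrm{JL}}}$ with $\tau>0$ one should note $1/f$ is still integrable at infinity because $p_{\mathrm{JL}}>1$. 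Once these expansions are in hand the corollary follows by a finite check against (A)/(B), and the Type II refinement (existence of $c_0$ with $f_c$ of Type II) is inherited verbatim from the last sentence of Theorem \ref{mainthms}.
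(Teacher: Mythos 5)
Your overall route is the same as the paper's: the paper proves Corollary \ref{maincor} by computing the pair $(k,\gamma)$ in \eqref{asF1} for each listed nonlinearity (this is Lemma \ref{examplelem}, proved via the integration-by-parts Lemma \ref{yoilem}) and then invoking Theorem \ref{mainthms}. However, several of the expansions you sketch are wrong, and two of the errors are fatal to the statement being proved. First, for $f=e^{u}(1+u)^{\beta+\frac1{12}}$ you claim $Ff'-1\sim\frac{\beta+1/12}{u}$, hence $k=1$ and $\gamma=\beta+\frac1{12}$. Writing $f=e^{b}$ with $b=u+(\beta+\frac1{12})\log(1+u)$, the correct first correction is $Ff'-1=-(1+o(1))\,b''/b'^{2}\sim(\beta+\frac1{12})u^{-2}$, while $-\log F\sim u$, so $k=2$ and $\gamma=\beta+\frac1{12}$ (Lemma \ref{examplelem}(v)). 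This distinction is exactly the point of this example: with $k=2$ the threshold in Theorem \ref{mainthms} is $\gamma=\frac1{4\sqrt{N-1}}=\frac1{12}$, giving the borderline $\beta=0$ of the corollary; with your $k=1$ the borderline would sit at $\gamma=0$, i.e. $\beta=-\frac1{12}$, and your argument would classify $-\frac1{12}<\beta<0$ as Type I, so bullet (ii) would not follow (your remark that this is ``consistent with the two sub-bullets'' is not correct). The same slip (using $g'-1$ or $g''/g'$ instead of $g''/g'^{2}$) occurs in your $e^{u+\beta(1+u)^{\theta}}$ computation, where the true exponent is $k=2-\theta$, though there the conclusion survives because only the sign of $\gamma$ matters.

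Second, in the $N\ge 11$ cases your signs for $\zeta=(\log(2+u))^{\tau}$ and $\zeta=e^{\tau(\log(u+1))^{\theta}}$ are reversed. A one-step integration by parts gives, for $f=(1+u)^{p_{\mathrm{JL}}}(\log(2+u))^{\tau}$, that $F(u)=\frac{(1+u)^{1-p_{\mathrm{JL}}}}{p_{\mathrm{JL}}-1}(\log u)^{-\tau}\bigl(1-\frac{\tau+o(1)}{(p_{\mathrm{JL}}-1)\log u}\bigr)$, hence $Ff'-q_{\mathrm{JL}}=-\frac{\tau+o(1)}{(p_{\mathrm{JL}}-1)^{2}\log u}$, so $k=1$ and $\gamma=-\tau/(p_{\mathrm{JL}}-1)$: the sign of $\gamma$ is $-\operatorname{sgn}\tau$, not $\operatorname{sgn}\tau$ (similarly $\gamma=-\theta\tau(p_{\mathrm{JL}}-1)^{-\theta-1}$ for the stretched-exponential $\zeta$; cf. Lemma \ref{examplelem}(vii),(viii)). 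With your sign these families land in the wrong bullets ($\tau>0$ would come out Type I, contradicting (iv)), which indicates the signs were asserted rather than derived. Relatedly, your ``key sub-computation'' $(Ff'-q_{\mathrm{JL}})\log u\to c_{N}\gamma$ for $\zeta=1+\gamma/\log(2+u)$ is inconsistent with the $k=2$ you need: for this $\zeta$ the $1/\log u$ terms in $Ff'$ cancel, the deviation is $O((\log u)^{-2})$, and $(-\log F)^{2}(Ff'-q_{\mathrm{JL}})\to\gamma$ exactly; there is no constant ``chosen so that the borderline is $\frac1{4\sqrt{N-1}}$'' --- that threshold comes from conditions (A)/(B) of Theorem \ref{mainthms}, and choosing a constant so the answer comes out right is circular. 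In short, the strategy is the correct one, but the corollary stands or falls with the precise values of $k$ and the signs of $\gamma$, and as written several of yours are wrong.
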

The key of the proof of Theorem \ref{mainthms} is to obtain the following 
theorem on the asymptotic behavior and the Morse index of the radial singular solution.
\begin{theorem}
\label{singularthm}
We assume \eqref{asf0}, \eqref{asf1} and \eqref{asF1} with $0<k\le 2$ and $\gamma\in \mathbb{R}$. Then, the equation \eqref{gelfand} has the unique radial singular solution $(\lambda_{*},U_{*})$. Moreover, we have $U_{*}\in H^{1}_{0}(B_1)$, 
\begin{equation*}
    \lambda(\alpha)\to\lambda_{*}, \hspace{4mm} \text{and}\hspace{4mm} u(r,\alpha)\to U_{*} \hspace{2mm}\text{in $C^2_{\mathrm{loc}}(0,1]$} \hspace{4mm}\text{as $\alpha\to\infty$},
\end{equation*}
\begin{equation*}
    \lambda_{*} f'(U_{*})=\frac{(N-2)^2}{4r^2}+\frac{\gamma\sqrt{N-1}+o(1)}{2^{k-2}r^2(-\log r)^k}\hspace{4mm}\text{as $r\to 0$},
\end{equation*}
and 
\begin{itemize}
    \item[{(i)}] $m(U_{*})=\infty$ if (A) is satisfied;
    \item[{(ii)}] $m(U_{*})<\infty$ if (B) is satisfied,
\end{itemize}
where $(A)$ and $(B)$ are those in Theorem \ref{mainthms}.
\end{theorem}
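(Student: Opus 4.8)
\emph{Proof proposal.} The plan is to split the argument into four stages: (1) existence and uniqueness of the radial singular solution; (2) the precise two-term expansion of $\lambda_*f'(U_*)$ near $r=0$; (3) convergence of the regular branch and $U_*\in H^1_0(B_1)$; and (4) the Morse-index dichotomy. Throughout I would pass to the variable $v:=F(u)$, which by \eqref{asf1} and Lemma \ref{convexlem} is a decreasing $C^2$ diffeomorphism of $[u_0,\infty)$ onto $(0,F(u_0)]$. A direct computation turns \eqref{gelfand} for radial $u$ into
\[
v''+\tfrac{N-1}{r}v'=\lambda+g(v)\,v'^2,\qquad g:=f'\circ F^{-1},
\]
with $v$ increasing, $v(0^+)=0$ for a solution singular at the origin, and $v(1)=F(0)$. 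Since \eqref{sim1} gives $v\,g(v)\to q_{\mathrm{JL}}$ as $v\to 0$, this is an asymptotically Euler equation whose only solution with $v(0^+)=0$ behaves like $v(r)=Ar^2(1+o(1))$ with $A=\lambda/(2N-4q_{\mathrm{JL}})=\lambda/(\sqrt{N-1}+1)^2$; the other local behaviours are $r^{1-\sqrt{N-1}}$ and $r^{1-\sqrt{N-1}}\log r$, coming from the \emph{double} characteristic root $1-\sqrt{N-1}$ (a manifestation of $q=q_{\mathrm{JL}}$), and both are incompatible with $v>0$, $v(0^+)=0$ when $N\ge 10$. Existence of $(\lambda_*,U_*)$ then follows by a shooting/continuity argument in $\lambda$, adjusting $\lambda$ so that $v(1)=F(0)$; uniqueness follows from monotonicity of the shooting map, ODE uniqueness on $(0,1]$, and the forced behaviour at $0$.

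Stage 2 is the technical core. Writing $\phi(r):=F(U_*(r))$ I would bootstrap in the transformed ODE. From $\phi=Ar^2(1+o(1))$ we get $-\log\phi=-2\log r+O(1)=(-\log r)(2+o(1))$, hence $(-\log\phi)^k=2^k(-\log r)^k(1+o(1))$. Inserting \eqref{asF1} in the form $g(v)=\tfrac{q_{\mathrm{JL}}}{v}+\tfrac{\gamma+o(1)}{v(-\log v)^k}$ into the equation for the correction $w:=\phi-Ar^2$ yields an inhomogeneous Euler equation; its homogeneous part carries only the forbidden behaviours $r^{1-\sqrt{N-1}}$, $r^{1-\sqrt{N-1}}\log r$, so $w$ must equal the particular solution, giving $\phi(r)=Ar^2\bigl(1+\tfrac{\mu+o(1)}{(-\log r)^k}\bigr)$ with $\mu=\tfrac{4\gamma}{2^k(\sqrt{N-1}+1)^2}$. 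Finally, using $f'(U_*)=\tfrac1\phi\bigl(q_{\mathrm{JL}}+\tfrac{\gamma+o(1)}{(-\log\phi)^k}\bigr)$, $\lambda_*=A(\sqrt{N-1}+1)^2$, $4q_{\mathrm{JL}}=(\sqrt{N-1}-1)^2$ (so $\lambda_*q_{\mathrm{JL}}/A=\tfrac{(N-2)^2}{4}$), and $(\sqrt{N-1}+1)^2-(\sqrt{N-1}-1)^2=4\sqrt{N-1}$, the two contributions to the $\tfrac1{r^2(-\log r)^k}$ term — one from $-\mu$ in the expansion of $1/\phi$, one from the explicit $\gamma$-term — add up to exactly
\[
\lambda_*f'(U_*)=\frac{(N-2)^2}{4r^2}+\frac{\gamma\sqrt{N-1}+o(1)}{2^{k-2}r^2(-\log r)^k}\qquad(r\to 0).
\]
(The expansion itself works for every $k>0$; the range $0<k\le 2$ in the hypothesis is the one in which Stage 4 gives a genuine dichotomy, since for $k>2$ the perturbation is always absorbed by the improved-Hardy remainder.)

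For Stage 3 I would use the a priori bounds standard in this context: $\lambda(\alpha)$ stays in a compact subset of $(0,\infty)$, and on each $[\delta,1]$ the functions $u(\cdot,\alpha)$ are bounded in $C^2$ uniformly in $\alpha$. Arzel\`a--Ascoli and continuous dependence for the ODE then give, along subsequences, $\lambda(\alpha)\to\bar\lambda$ and $u(\cdot,\alpha)\to U$ in $C^2_{\mathrm{loc}}(0,1]$, with $U$ a radial solution on $(0,1]$, $U(1)=0$, decreasing, and $U(0^+)=\infty$ (otherwise $U$ would be a bounded solution, contradicting $\alpha\to\infty$); hence $(\bar\lambda,U)$ is a radial singular solution, so $(\bar\lambda,U)=(\lambda_*,U_*)$ by Stage 1, and since the subsequential limit is unique, the whole family converges. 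The Stage 2 asymptotics control the blow-up rate of $U_*$ at $0$ (through $U_*'=-f(U_*)(F(U_*))'$ and $-\Delta U_*=\lambda_*f(U_*)$) well enough to give $\int_0^1|U_*'|^2r^{N-1}\,dr<\infty$, i.e. $U_*\in H^1_0(B_1)$. For Stage 4, by spherical-harmonic decomposition $m(U_*)$ is finite iff the radial operator $L_0:=-\partial_r^2-\tfrac{N-1}{r}\partial_r-\lambda_*f'(U_*)$ has finitely many negative eigenvalues, and away from the origin $L_0$ is a regular Sturm--Liouville problem contributing only finitely many; so everything is decided near $0$. There I would use the substitution $\varphi(r)=r^{-(N-2)/2}g(-\log r)$, which (with the Stage 2 expansion) turns the radial form into
\[
\int_0^\rho\bigl(|\varphi'|^2-\lambda_*f'(U_*)\varphi^2\bigr)r^{N-1}\,dr=\int_{-\log\rho}^\infty\Bigl(g'(t)^2-\frac{c_k+o(1)}{t^k}g(t)^2\Bigr)\,dt,\qquad c_k:=\frac{\gamma\sqrt{N-1}}{2^{k-2}}.
\]
For $k=2$ this is governed by the one-dimensional Hardy inequality $\int(g')^2\ge\tfrac14\int g^2/t^2$ with its sharp, non-attained constant $\tfrac14$: if $c_2<\tfrac14$ (i.e. $\gamma<\tfrac1{4\sqrt{N-1}}$) the right-hand side is nonnegative for $\rho$ small, giving $m(U_*)<\infty$, while if $c_2>\tfrac14$ the near-minimizers of the $1$D Hardy inequality, which concentrate at $t=\infty$, provide infinitely many disjointly supported radial test functions with negative energy, giving $m(U_*)=\infty$. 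For $0<k<2$ an elementary scaling ($g$ a bump on $(T,2T)$: $\int(g')^2$ is of order $T^{-1}$ while $\int g^2/t^k$ is of order $T^{1-k}$, whose ratio tends to $\infty$) gives the same dichotomy with threshold $c_k=0$, i.e. according to the sign of $\gamma$: for $\gamma<0$ the displayed integrand is nonnegative for large $t$ so $m(U_*)<\infty$, and for $\gamma>0$ one gets infinitely many negative directions supported in disjoint annuli shrinking to the origin.

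The main obstacle is Stage 2: extracting the exact coefficient $\tfrac{\gamma\sqrt{N-1}}{2^{k-2}}$ of the $\tfrac1{r^2(-\log r)^k}$ term requires a genuinely second-order asymptotic analysis of the transformed ODE in which every $o(1)$ is controlled uniformly near $r=0$, and — because the Morse-index dichotomy at $k=2$ turns on the exact value $\tfrac1{4\sqrt{N-1}}$ — this precision cannot be relaxed; matching it in Stage 4 in turn forces the use of the sharp constant $\tfrac14$ in the (one-dimensional, hence improved multidimensional) Hardy inequality.
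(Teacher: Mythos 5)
Your overall strategy coincides with the paper's: pass to $F(u)$, work in the variable $t=-\log r$ where the criticality $q=q_{\mathrm{JL}}$ manifests itself as a double characteristic root, extract the $(-\log r)^{-k}$ correction to $F(U_*)/r^2$, and then decide the Morse index through the Hardy inequality with logarithmic remainder and sharp constant $\tfrac14$; your Stage 4 reduction $\varphi=r^{-(N-2)/2}g(-\log r)$ is exactly the content (and the proof) of Proposition \ref{criticalprop}, and your coefficient bookkeeping ($\mu=\tfrac{4\gamma}{2^k(\sqrt{N-1}+1)^2}$, $a^2-4q_{\mathrm{JL}}=4\sqrt{N-1}$, threshold $\gamma\sqrt{N-1}\,2^{2-k}$ versus $\tfrac14$ at $k=2$, sign of $\gamma$ for $0<k<2$) is correct. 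Working with $v=F(u)$ uniformly in $N$ is a legitimate variant of the paper's two normalizations ($e^{-x}$ for $N=10$, $(1+z)^{-1/(q_{\mathrm{JL}}-1)}$ for $N\ge 11$), and your Stages 1 and 3 are essentially the statement of Theorem \ref{senthm}, quoted in the paper from \cite{Mi2023}, so sketching them is acceptable.

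The genuine gap is Stage 2, which you flag but do not fill. The step ``the homogeneous part carries only the forbidden behaviours, so $w$ must equal the particular solution'' is a linear superposition argument, while the equation for $w=\phi-Ar^2$ is nonlinear and resonant: after inserting \eqref{asF1}, the forcing $\tfrac{\gamma+o(1)}{v(-\log v)^k}v'^2$ and the error terms depend on $w$ and, crucially, on $w'$, and the linearized operator has the double root you identified, so nothing a priori excludes corrections of intermediate size between $t^{-k}$ and the growing modes. To conclude that the bounded correction has exactly the $t^{-k}$ particular profile one needs (i) a first-pass a priori bound $w=O(r^2(-\log r)^{-k})$, (ii) control of the derivative of the correction (in the paper $x'\to 0$, $z'\to 0$, imported from \cite[Proposition 3.1]{Mi2023}, which is what makes the forcing $\phi(V)(x'+2)^2$ genuinely of size $t^{-k}$), and (iii) an identification of the singular solution's correction with the fixed point of the integral map $y\mapsto\int_t^\infty(t-s)e^{a(t-s)}[\cdots]\,ds$, which rests on the uniqueness of bounded solutions tending to zero (the paper invokes \cite[Lemma 4.2]{deng2000}). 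This is precisely the two-pass contraction argument of Lemmas \ref{xlem}, \ref{zlem} and Propositions \ref{superprop}, \ref{superprop2}; without it the exact constant $\gamma\sqrt{N-1}\,2^{2-k}$, on which the $k=2$ dichotomy turns, is not established. A smaller point: $U_*\in H^1_0(B_1)$ does not follow from the Stage 2 asymptotics alone; as in Proposition \ref{h1prop} you also need the superlinearity bound $u^{p_0-1}\le C_0 f'(u)$ with $p_0>\tfrac{N+2}{N-2}$ from Lemma \ref{convexlem}, combined with the gradient estimate $-rV'\le \tfrac{2N}{p_0+1}V$, to control $|U_*'|$.
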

Here, we say that $(\lambda_{*}, U_{*})$ is a radial singular solution of \eqref{gelfand} if $U_{*}\in C^2(0,1]$ is a regular solution of \eqref{gelfand} in $(0,1]$ with $\lambda=\lambda_{*}$ so that $U_{*}(r)\to \infty$ as $r\to 0$. Moreover, the Morse index $m(u)$ is defined by the maximal dimension of a subspace $X_k\subset C^{1}_{\mathrm{c}}(B_1)$ satisfying 
\begin{equation*}
    Q_{u}(\xi):=\int_{B_1}\left(|\nabla\xi|^2-\lambda f'(u)\xi^2\right)\,dx<0 \hspace{4mm}\text{for any $\xi\in X_k \setminus \{0\}$}.
\end{equation*}
We mention that the Morse induces of radial singular solutions play a key role in the bifurcation structure. Indeed, Brezis and V\'azquez \cite{Br} showed that the bifurcation diagram is of Type II if and only if $m(U_{*})=0$ for the case where $U_{*}\in H^1_{0}(B_1)$ and $f$ is non-decreasing and convex. In addition, Miyamoto \cite{Mi2014} expected that $m(U_{*})$ is equal to the number of turning points. Thus, various properties for radial singular solutions have been studied for many authors (see \cite{Luo,Merle,chen,Guowei,chend,Mi2014,Mi2015,Mi2018,Mi2020,KiWei,Marius}), and finally Miyamoto and Naito \cite{Mi2023} showed the uniqueness of the radial singular solutions and 
\begin{equation}
\label{r-as}
    \lambda_{*} f'(U_{*})=\frac{q(2N-4q)}{r^2}(1+o(1)) \hspace{4mm}\text{as $r\to0$}
\end{equation}
for all $f$ satisfying \eqref{asf1}. We can deduce from \eqref{r-as} and Hardy's inequality that $m(U_{*})=\infty$ for $q>q_{\mathrm{JL}}$ and $m(U_{*})<\infty$ for $q<q_{\mathrm{JL}}$. On the contrary, in the case $q=q_{\mathrm{JL}}$, \eqref{r-as} is not sufficient to study the Morse index of $U_{*}$ since $q_{\mathrm{JL}}(2N-4q_{\mathrm{JL}})$ is exactly the best constant $\frac{(N-2)^2}{4}$ of Hardy's inequality. Theorem \ref{singularthm} gives a refined asymptotic profile of $U_{*}$ than that in \eqref{r-as}, which enables us to study the Morse index of the radial singular solution and clarify the bifurcation structure.

The paper is organized as follows. In Section 2, we give examples of $f$ satisfying \eqref{asf1} and \eqref{asF1}. In Section 3, we study the asymptotic behavior and the Morse index of the radial singular solution, by applying a specific transformation used in \cite{Mi2023} and the fixed point theorem. As a result, we prove Theorem \ref{singularthm}. In Section 4, we prove separation/intersection results, which play key roles in studying the bifurcation structure. In Section 5, we prove Theorems \ref{main1}, \ref{mainthms} and Corollary \ref{maincor}. We mention that Proposition \ref{mainprop} obtained by using the idea in \cite{NS,kuma2024} plays an important role to obtain Theorem \ref{mainthms}.
\section{Preliminaries}
We first introduce the following
\begin{lemma}
\label{convexlem}
Assume that $f$ satisfies \eqref{asf0} and \eqref{asf1}. Then, we have $q\ge 1$. In addition, there exist $C_0>0$, $C_1>0$, $u_0>1$ and $\frac{N+2}{N-2}<p_0$ such that
\begin{itemize} 
    \item[{(i)}] $(p_0+1)\int_{0}^{u}f(t)\,dt<uf(u)$ for $u>u_0$;
    \item[{(ii)}] $u^{p_{0}-1}\le C_0 f'(u)$ for $u>u_0$;
    \item[{(iii)}] $C_{1}^{-1}u^{p_0-2}<f''(u)$ for $u>u_0$.
\end{itemize}
In particular, we obtain $f'(u)\to\infty$ as $u\to\infty$. In addition, if $N\ge 10$, we can take $p_0$ so that $\frac{N+2}{N-2}<p_0<2$.
\end{lemma}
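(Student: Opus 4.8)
The plan is to first squeeze out of \eqref{asf1} the qualitative fact that $f$ is eventually strictly convex and increasing with $f'(u)\to\infty$, and then to read the polynomial lower bounds (i)--(iii) off the limiting behaviour of $uf'(u)/f(u)$ and $f(u)f''(u)/(f'(u))^2$.

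First I would show that $f''$ has constant sign near $+\infty$: if $f''$ vanished at arbitrarily large points the limit in \eqref{asf1} could not exist, so $f''\neq0$ on some $[u_1,\infty)$, and continuity pins down the sign. That sign cannot be negative, for then $f$ would be concave and hence at most linear near $+\infty$, forcing $F(u)\equiv\infty$ against \eqref{asf1}. So $f''>0$ near $+\infty$, $f'$ is increasing and tends to some $\ell\le+\infty$, and $\ell<\infty$ would again make $f$ at most linear; hence $f'(u)\to\infty$ and $f$ increases to $\infty$. In particular $q\ge0$. Next, since $F(u)\to0$ and $1/f'(u)\to0$ with $(1/f')'=-f''/(f')^2\neq0$ near $+\infty$, L'Hospital's rule gives $F(u)f'(u)\to q$, which is \eqref{sim1}; then $(Ff)'(u)=F(u)f'(u)-1\to q-1$, and since $Ff>0$ the case $q<1$ is impossible (it would send $Ff$ to $-\infty$), so $q\ge1$. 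The same computation for $h:=f/f'$, whose derivative $h'=1-ff''/(f')^2$ tends to $(q-1)/q\in[0,1)$, gives $f(u)\le uf'(u)$ for large $u$ and $uf'(u)/f(u)\to q/(q-1)$ (read as $+\infty$ when $q=1$).

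For the quantitative part, the decisive algebra is the equivalence $q<\tfrac{N+2}{4}\iff q/(q-1)>\tfrac{N+2}{N-2}$, both sides being $+\infty$ when $q=1$: I would fix $p_0$ with $\tfrac{N+2}{N-2}<p_0<q/(q-1)$, and, noting $\tfrac{N+2}{N-2}<2$ for $N\ge7$, additionally impose $p_0<2$ when $N\ge10$. Then $uf'(u)/f(u)\ge p_0+\delta$ for $u\ge u_2$ and some $\delta>0$. For (i) I would study $G(u):=uf(u)-(p_0+1)\int_0^u f$, whose derivative $G'(u)=f(u)\big(uf'(u)/f(u)-p_0\big)\ge\delta f(u)$ on $[u_2,\infty)$ is positive with divergent integral, so $G(u)\to+\infty$ and (i) holds beyond some $u_0$. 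For (ii), integrating $f'(u)/f(u)\ge p_0/u$ on $[u_2,\infty)$ gives $f(u)\ge c_0 u^{p_0}$, whence $f'(u)=(f'(u)/f(u))f(u)\ge p_0 c_0 u^{p_0-1}$. For (iii), the definition of $q$ gives $ff''\ge\tfrac{1}{2q}(f')^2$ near $+\infty$, and combining this with $f\le uf'$ from the previous step yields $f''(u)\ge f'(u)/(2qu)\ge\tfrac{p_0c_0}{2q}u^{p_0-2}$. Enlarging $u_0$ and choosing $C_0,C_1$ accordingly completes the proof; $f'(u)\to\infty$ was obtained in the first step.

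I expect the main obstacle to be exactly that opening bootstrap: turning ``$F<\infty$ together with the bare existence of the limit $q$'' into ``$f$ eventually convex with $f'\to\infty$'', because every subsequent step --- the L'Hospital identities, $q\ge1$, and the asymptotics of $uf'/f$ and $ff''/(f')^2$ --- rests on it. A secondary subtlety is the borderline value $q=1$, where $q/(q-1)=+\infty$; the argument above is arranged so that this case is absorbed uniformly rather than handled separately.
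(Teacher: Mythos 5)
Your proposal is correct, and for the substantive content (i)--(iii) it follows essentially the same route as the paper: translate $q<\frac{N+2}{4}$ into $\frac{q}{q-1}>\frac{N+2}{N-2}$, choose $p_0$ in the resulting interval (with $p_0<2$ available once $N\ge 7$), and obtain the polynomial bounds from the limits $uf'/f\to q/(q-1)$ and $ff''/f'^2\to 1/q$. The only real differences are that the paper simply cites \cite{Mi2023} for $q\ge 1$ and $f'(u)\to\infty$, whereas you prove these from scratch (eventual sign of $f''$, the L'Hospital identity $Ff'\to q$, and the observation $(Ff)'=-1+Ff'\to q-1$ forcing $q\ge1$), and that you derive (i) and (ii) by directly integrating $G'(u)=uf'-p_0f\ge\delta f$ and $f'/f\ge p_0/u$, while the paper computes $\lim uf(u)/\int_0^u f$ by L'Hospital and then uses a Gronwall-type step; these variants are equivalent in substance, and your version has the minor merit of being self-contained.
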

\begin{proof}
Thanks to \cite[Lemma 2.1]{Mi2023}, we have $q\ge 1$ and $f'(u)\to\infty$ as $u\to\infty$. 

Now, we prove (i) by using the argument used in the proof of \cite[Theorem 1.1]{Mi2023}. By L'Hospital's rule, we have
\begin{equation*}
    \lim_{u\to\infty}\frac{f(u)}{f'(u)u}=\lim_{u\to\infty}\frac{f(u)/f'(u)}{u}=\lim_{u\to\infty}\left(1-\frac{f(u) f''(u)}{f'^2(u)}\right)=\frac{q-1}{q}.
\end{equation*}
Here, we remark that when $q=1$, we have $uf'(u)/f(u)\to \infty$. Then, thanks to L'Hospital's rule, we obtain
\begin{equation}
\label{two}
    \lim_{u\to \infty}\frac{uf(u)}{\int_{0}^{u}f(t)\,dt}=\lim_{u\to \infty}\frac{uf'(u)}{f(u)}+1=
\begin{cases}
    \frac{2q-1}{q-1} &\text{if $q>1$,}\\
    \infty &\text{if $q=1$}.
\end{cases}    
\end{equation}
Since $q<\frac{N+2}{4}$, we can deduce that $\frac{2q-1}{q-1}>\frac{2N}{N-2}$. Hence, we have (i) with some $p_0>\frac{N+2}{N-2}$. 

Then, we prove (ii). By using (i) and Gronwall's inequality, we have
\begin{equation*}
 \int_{0}^{u}f(t) \,dt\ge\left(\frac{u}{u_1}\right)^{p_{0}+1} \int_{0}^{u_1}f(t)\,dt \hspace{4mm}\text{for all $u>u_1$}
\end{equation*}
with some $u_1>0$ sufficiently large. By using the three estimates above and taking $u_0>u_1$ sufficiently large, we obtain
\begin{equation}
\label{three}
   u^{p_{0}-1}\le C_0 f'(u) \hspace{4mm}\text{for all $u>u_0$}
\end{equation}
with some $C_0>0$. Thus, the conclusion follows.

Finally, we prove (iii). Thanks to \eqref{asf1}, \eqref{two} and \eqref{three}, we obtain 
\begin{equation*}
   u^{2-p_0} f''(u)=u^{1-p_0}\frac{f(u)f''(u)}{f'(u)^2}\cdot\frac{f'(u)u}{f(u)}\cdot f'(u)\ge \frac{1+o(1)}{qu^{p_0-1}}f'(u)\ge \frac{1+o(1)}{C_0q}
\end{equation*}
as $u\to\infty$. Thus, we obtain the result.
\end{proof}
Next, we find some examples of $f$
satisfying \eqref{asf0}, \eqref{asf1} and \eqref{asF1}. In order to obtain the examples, we prove the following 
\begin{lemma}
\label{yoilem}
Let $u_0>0$ and $p>1$. We assume that $b\in C^3(u_0,\infty)$ satisfies $b>0$, $b'\neq 0$ and $b''\neq 0$ for all $u\in (u_0,\infty)$. Then the following are satisfied.
\begin{itemize}
\item[(i)]Assume that $b$ satisfies $b'>0$ in $(u_0,\infty)$ and $b\to\infty$ as $u\to\infty$. Moreover, we suppose that 
\begin{equation}
\label{b1}
    \text{$b^{-1}\log b'\to 0$,\hspace{4mm}$c\to 0$ \hspace{4mm}and\hspace{4mm} $\frac{c'}{b'c}\to 0$ \hspace{4mm} as $u\to\infty$,}
\end{equation}
where $c=b''b'^{-2}$. Then,
$f(u)=e^{b(u)}$ satisfies 
\begin{equation*}
    Ff'-1=-(1+o(1))c\hspace{4mm}\text{and}\hspace{4mm}-\log F=(1+o(1))b \hspace{4mm}\text{as $u\to\infty$,}
\end{equation*}
where $F$ is that in \eqref{asf1}.
\item[(ii)]Assume that $b$ satisfies 
\begin{equation}
\label{b2}
    \text{$\frac{\log b}{\log u}\to 0$,\hspace{4mm}$cb\to 0$\hspace{4mm}and\hspace{4mm} $c's/c\to 0$\hspace{4mm} as $u\to\infty$,}
\end{equation}
where $c=b'ub^{-2}$. Then, $f=u^{p}b(u)$ satisfies
\begin{equation*}
    Ff'-\frac{p}{p-1}=-\frac{(1+o(1))}{(p-1)^{2}}bc \hspace{2mm}\text{and}\hspace{2mm}-\log F=(p-1+o(1))\log u 
\end{equation*}
as $u\to\infty$.
\end{itemize}
\end{lemma}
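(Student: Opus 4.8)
The plan is to derive a sharp two-term expansion of $F(u)=\int_u^\infty ds/f(s)$ in each case by integrating by parts twice and exploiting that $c$ has a fixed sign for large $u$ (it is continuous and, by hypothesis, nonvanishing, being built from $b''\neq0$ in case (i) and from $b'\neq0$, $b>0$ in case (ii)). The first integration by parts identifies the leading term of $F$ together with an explicit ``correction integral'' (and incidentally shows $F(u)<\infty$), and the second converts that correction integral into its leading order; the decay hypotheses on $c$ are exactly what makes the remainder of the second step negligible \emph{relative to the correction term} — not merely relative to $F$.

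For (i) I would start from the identity $\frac{d}{ds}\!\big(-e^{-b}/b'\big)=e^{-b}(1+c)$. Integrating on $(u,\infty)$ — the boundary contribution at $\infty$ vanishes because $b^{-1}\log b'\to0$ forces $e^{-b}/b'=e^{-(1+o(1))b}\to0$ — yields
\[
F(u)+\int_u^\infty c\,e^{-b}\,ds=\frac{e^{-b(u)}}{b'(u)}.
\]
A second integration by parts, based on $\big(c/b'\big)'=c'/b'-c^2=o(c)$ (here $c'/b'=o(c)$ is the hypothesis $c'/(b'c)\to0$ and $c^2=o(c)$ uses $c\to0$) together with the fixed sign of $c$, gives $\int_u^\infty c\,e^{-b}\,ds=(1+o(1))\,c(u)e^{-b(u)}/b'(u)$. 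Hence $F=\frac{e^{-b}}{b'}\big(1-(1+o(1))c\big)$; multiplying by $f'=b'e^b$ gives $Ff'-1=-(1+o(1))c$, and $-\log F=b+\log b'-\log\!\big(1-(1+o(1))c\big)=b+o(b)=(1+o(1))b$, using $\log b'=o(b)$ and $c\to0$.

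Case (ii) is parallel with $f=u^pb$: the key identity is $\frac{d}{ds}\!\big(-\frac{1}{(p-1)s^{p-1}b}\big)=\frac{1}{s^pb}\big(1+\frac{cb}{p-1}\big)$ where $cb=sb'/b$, so
\[
F(u)+\frac{1}{p-1}\int_u^\infty\frac{c(s)}{s^p}\,ds=\frac{1}{(p-1)u^{p-1}b(u)},
\]
the boundary term at $\infty$ vanishing because $\log b/\log u\to0$. A second integration by parts, using $c's/c\to0$ (which also yields $|c|=s^{o(1)}$, so the boundary term there vanishes) and the fixed sign of $c$, gives $\int_u^\infty c\,s^{-p}\,ds=(1+o(1))\,c(u)/\big((p-1)u^{p-1}\big)$, whence $F=\frac{1}{(p-1)u^{p-1}b}\big(1-\frac{(1+o(1))cb}{p-1}\big)$. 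Since $f'=u^{p-1}b\,(p+cb)$, expanding $Ff'=\frac{p+cb}{p-1}\big(1-\frac{(1+o(1))cb}{p-1}\big)$ and using $cb\to0$ gives $Ff'-\frac{p}{p-1}=-\frac{(1+o(1))bc}{(p-1)^2}$; taking logarithms and using $\log b=o(\log u)$ gives $-\log F=(p-1+o(1))\log u$.

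The main obstacle is the second integration by parts. A crude estimate such as $\big|\int_u^\infty c\,e^{-b}\,ds\big|\le(\sup_{s\ge u}|c|)\,F(u)$ only shows this integral is $o(F)$, which is insufficient: to pin down the coefficient $-c$ in $Ff'-1$ one needs the \emph{exact leading order} of the correction integral. This forces two things — first, that $c$ does not change sign (otherwise $\int_u^\infty c\,e^{-b}\,ds$ could be much smaller than $c\,e^{-b}/b'$, destroying the asymptotics), which is why the continuity-plus-nonvanishing hypotheses on $b''$ (resp.\ $b'$) are imposed; and second, the quantitative decay conditions $c'/(b'c)\to0$ (resp.\ $c's/c\to0$), which are precisely what make the post-IBP remainder $o$ of the correction term rather than merely $o(F)$. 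Carrying out the bookkeeping that every boundary term at $\infty$ vanishes and assembling the error terms at the correct order is the technical heart; the $C^3$ regularity of $b$ is used here so that $c'$, hence the second integration by parts, is legitimate.
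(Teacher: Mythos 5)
Your proposal is correct and follows essentially the same route as the paper: the same two integrations by parts (via $\frac{d}{ds}(-e^{-b}/b')=e^{-b}(1+c)$ and $\frac{d}{ds}\bigl(-\tfrac{1}{(p-1)s^{p-1}b}\bigr)=\tfrac{1}{s^pb}\bigl(1+\tfrac{cb}{p-1}\bigr)$), with the hypotheses $c'/(b'c)\to0$, resp.\ $sc'/c\to0$, used to show the post-IBP remainder is negligible relative to the correction term. The only difference is presentational: you spell out the bootstrap step (fixed sign of $c$ plus $(c/b')'=o(c)$, resp.\ $sc'=o(c)$, giving $\int_u^\infty ce^{-b}\,ds=(1+o(1))ce^{-b}/b'$ and its analogue) and the vanishing of the boundary terms, which the paper leaves implicit.
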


\begin{proof}
At first, we prove (i). From \eqref{b1}, we have $e^{-b}b'^{-1}=o(1)$ and $c=o(1)$ as $u\to\infty$. Hence, we obtain   
\begin{align*}
F(u)=\int_{u}^{\infty}\frac{\,ds}{f(s)}&=[-e^{-b}b'^{-1}]^{\infty}_{u}-\int_{u}^{\infty}e^{-b}c\,ds\\
&=e^{-b}b'^{-1}(1-c)-\int_{u}^{\infty}e^{-b}(cb'^{-1})'\,ds.
\end{align*}
Moreover, it follows from \eqref{b1} that $(cb'^{-1})'/c= -c+c'b'^{-1}c^{-1}=o(1)$ as $u\to\infty$. As a result, we have
\begin{equation*}
F(u)=e^{-b}b'^{-1}(1-(1+o(1))c) \hspace{4mm}\text{as $u\to\infty$.}
\end{equation*}
Thus, by using \eqref{b1} again, we obtain $-\log F(u)=(1+o(1)) b$ as $u\to\infty$. Moreover, since $f'=e^b b'$, we have
\begin{equation*}
f'(u)F(u)-1=-(1+o(1))c \hspace{4mm}\text{as $u\to\infty$.}
\end{equation*}
We now prove (ii). From \eqref{b2}, we have $u^{1-p}b^{-1}=o(1)$ and $bc=o(1)$ as $u\to\infty$. Therefore, 
we have
\begin{align*}
F(u)&=\int_{u}^{\infty}\frac{\,ds}{f(s)}=[\frac{-1}{p-1}s^{-(p-1)}b^{-1}]^{\infty}_{u}-\int_{u}^{\infty}\frac{1}{p-1} s^{-p}c\,ds\\
&=\frac{1}{p-1}u^{-(p-1)}b^{-1}(1-\frac{1}{p-1}bc)-\frac{1}{(p-1)^2}\int_{u}^{\infty}(sc') s^{-p}\,ds.
\end{align*}
Moreover, we deduce from \eqref{b2} that 
\begin{equation*}
    F(u)=\frac{1}{p-1}u^{-(p-1)}b^{-1}(1-\frac{1+o(1)}{p-1}bc) \hspace{4mm}\text{as $u\to\infty$.}
\end{equation*}
In particular, by using \eqref{b2} again, we obtain
\begin{equation*}
-\log F= (p-1+o(1))\log u \hspace{4mm}\text{as $u\to\infty$.}
\end{equation*}
In addition, since $f'(u)=pu^{p-1}b(1+bcp^{-1})$, we can deduce that
\begin{equation*}
    Ff'-\frac{p}{p-1}=\frac{p}{p-1}(1+p^{-1}bc)(1-\frac{1+o(1)}{p-1}bc)-\frac{p}{p-1}=\frac{-(1+o(1))bc}{(p-1)^2}
\end{equation*}
as $u\to\infty$.
\end{proof}
By using Lemma \ref{yoilem}, we obtain
\begin{lemma}
\label{examplelem}
In the case $N=10$, $f$ satisfies \eqref{asf0}, \eqref{asf1} and \eqref{asF1} with $q=q_{\mathrm{JL}}=1$ and
\begin{itemize}
    \item[(i)] $k=1-\frac{1}{\nu}$ and $\gamma=\frac{1}{\nu}$ if $f(u)=e^{(\log (1+u))^{\nu}}$ with $\nu>1$;
    \item[(ii)] $k=1$ and $\gamma=\frac{1-\nu}{\nu}$ if $f=e^{(1+u)^\nu}$ with $0<\nu<1$ or $1<\nu$;
    \item[(iii)] $k=2-\nu$ and $\gamma=\beta \nu(1-\nu)$
    if $f=e^{u+\beta (1+u)^\nu}$ with $0<\nu<1$;
    \item[(iv)] $k=1$ and $\gamma=-1$  if $f=e^{e^u}$;
    \item[(v)] $k=2$ and $\gamma=\nu$ if $f=e^u(1+u)^{\nu}$ with $\nu\in \mathbb{R}$.
\end{itemize}
In addition, in the case $N\ge 11$, $f$ satisfies \eqref{asf0}, \eqref{asf1} and \eqref{asF1} with $q=q_{\mathrm{JL}}$ and
\begin{itemize}
    \item[(vi)] $k=1+\nu$ and $\gamma=\nu \beta(p_{\mathrm{JL}}-1)^{\nu-1}$ if $f=(1+u)^{p_{\mathrm{JL}}}(1+\beta (\log (u+2))^{-\nu})$ with $0<\nu\le 1$ and $\beta>-(\log 2)^\nu$; 
    \item[(vii)] $k=1-\nu$ and $\gamma=-\nu\beta(p_{\mathrm{JL}}-1)^{-\nu-1}$
    if $f=(1+u)^{p_{\mathrm{JL}}}e^{\beta(\log (u+1))^\nu}$ with $0<\nu<1$ and $\beta\in \mathbb{R}$;
    \item[(viii)] $k=1$ and $\gamma=-\nu(p_{\mathrm{JL}}-1)^{-1}$
    if $f=(1+u)^{p_{\mathrm{JL}}}(\log (2+u))^\nu$ with $\nu\in \mathbb{R}$. 
\end{itemize}
\end{lemma}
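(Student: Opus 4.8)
The plan is to verify Lemma~\ref{examplelem} by reducing every listed nonlinearity to an application of Lemma~\ref{yoilem}, computing the exponent $b$, the auxiliary quantity $c$, and then reading off $k$ and $\gamma$ from the asymptotic formulas $Ff'-1=-(1+o(1))c$, $-\log F=(1+o(1))b$ in case (i) of Lemma~\ref{yoilem}, and $Ff'-\frac{p}{p-1}=-\frac{(1+o(1))}{(p-1)^2}bc$, $-\log F=(p-1+o(1))\log u$ in case (ii). First I would treat items (i)--(iv) for $N=10$ (where $q_{\mathrm{JL}}=1$, so $f=e^{b(u)}$ fits case (i) of Lemma~\ref{yoilem}): for (i), $b=(\log(1+u))^\nu$, so $b'\sim \nu(\log u)^{\nu-1}/u$, $b''b'^{-2}\sim \frac{1}{\nu}(\log u)^{1-\nu}$, giving $c\to 0$ and (after checking $b^{-1}\log b'\to 0$ and $c'/(b'c)\to 0$) the identity $(Ff'-1)(-\log F)^k\to\gamma$ with $-\log F\sim b=(\log u)^\nu$ and $Ff'-1\sim -\frac{1}{\nu}(\log u)^{1-\nu}$, forcing $k=1-\frac1\nu$ and $\gamma=\frac1\nu$. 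The computations for (ii)--(iv) are identical in spirit: $b=(1+u)^\nu$ gives $c\sim \frac{1-\nu}{\nu}u^{-\nu}$ and $b$ itself is the log-reciprocal scale, so $(Ff'-1)(-\log F)\to \frac{1-\nu}{\nu}\cdot(-1)\cdot(-1)=\frac{1-\nu}{\nu}$... wait, one must track signs carefully: $Ff'-1=-(1+o(1))c$, so $\gamma=\lim -c\cdot b^{k}$, and since $b=(1+u)^\nu$, $k=1$; for (iii) $b=u+\beta(1+u)^\nu$ so $b'\sim 1$, $b''\sim \beta\nu(\nu-1)u^{\nu-2}$, $c=b''b'^{-2}\sim \beta\nu(\nu-1)u^{\nu-2}$, $-\log F\sim b\sim u$, hence $(Ff'-1)(-\log F)^k\sim -\beta\nu(\nu-1)u^{\nu-2}\cdot u^k$, so $k=2-\nu$ and $\gamma=-\beta\nu(\nu-1)=\beta\nu(1-\nu)$; for (iv) $b=e^u$, $b'=e^u$, $c=b''b'^{-2}=e^u\cdot e^{-2u}=e^{-u}=1/b$, $-\log F\sim e^u$, so $k=1$ and $\gamma=-1$.

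Item (v), $f=e^u(1+u)^\nu$ with $N=10$, does not have the pure form $e^{b}$ with $b$ convex-or-concave of a single sign unless we write $b=u+\nu\log(1+u)$; then $b'=1+\frac{\nu}{1+u}\to 1$, $b''=-\frac{\nu}{(1+u)^2}$, so $b''\neq 0$ for $\nu\neq 0$ and $c=b''b'^{-2}\sim -\nu u^{-2}$, while $-\log F\sim b\sim u$, giving $(Ff'-1)u^2\to -(-\nu)=\nu$, hence $k=2,\ \gamma=\nu$; for $\nu=0$ this is $f=e^u$ which is the classical Joseph--Lundgren critical case and one checks directly $F=e^{-u}$, $Ff'-1=0$, so \eqref{asF1} holds with any $k$ and $\gamma=0$, consistent with $k=2,\gamma=0$. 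The second block, items (vi)--(viii) for $N\ge 11$ with $q_{\mathrm{JL}}=\frac{p_{\mathrm{JL}}}{p_{\mathrm{JL}}-1}$, uses case (ii) of Lemma~\ref{yoilem} with $p=p_{\mathrm{JL}}$: for (vi), $b=1+\beta(\log(u+2))^{-\nu}\to 1$, $b'\sim -\beta\nu u^{-1}(\log u)^{-\nu-1}$, $c=b'ub^{-2}\sim -\beta\nu(\log u)^{-\nu-1}$, $-\log F\sim(p_{\mathrm{JL}}-1)\log u$, and $Ff'-q_{\mathrm{JL}}\sim -\frac{bc}{(p_{\mathrm{JL}}-1)^2}\sim \frac{\beta\nu(\log u)^{-\nu-1}}{(p_{\mathrm{JL}}-1)^2}$, so $(Ff'-q_{\mathrm{JL}})(-\log F)^k\to \gamma$ requires $k=\nu+1$ and $\gamma=\frac{\beta\nu(p_{\mathrm{JL}}-1)^{\nu+1}}{(p_{\mathrm{JL}}-1)^2}=\beta\nu(p_{\mathrm{JL}}-1)^{\nu-1}$; similarly (vii) with $b=e^{\beta(\log(u+1))^\nu}$ gives $b'\sim b\cdot\beta\nu u^{-1}(\log u)^{\nu-1}$, $c\sim\beta\nu b^{-1}(\log u)^{\nu-1}$, $bc\sim\beta\nu(\log u)^{\nu-1}$, yielding $k=1-\nu$, $\gamma=-\frac{\beta\nu(p_{\mathrm{JL}}-1)^{1-\nu}}{(p_{\mathrm{JL}}-1)^2}=-\beta\nu(p_{\mathrm{JL}}-1)^{-\nu-1}$; and (viii) with $b=(\log(2+u))^\nu$ gives $b'\sim\nu u^{-1}(\log u)^{\nu-1}$, $c=b'ub^{-2}\sim\nu(\log u)^{-\nu-1}$, $bc\sim\nu(\log u)^{-1}$, hence $k=1$, $\gamma=-\frac{\nu(p_{\mathrm{JL}}-1)}{(p_{\mathrm{JL}}-1)^2}=-\nu(p_{\mathrm{JL}}-1)^{-1}$.

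For each item there is also the bookkeeping of verifying \eqref{asf0} (that $f\in C^2[0,\infty)$ and $f>0$), which is immediate since each $f$ is a composition of smooth positive functions on $[0,\infty)$ provided the constraints on $\beta,\nu$ (e.g. $\beta>-(\log 2)^\nu$ in (vi), so that $1+\beta(\log 2)^{-\nu}>0$, and $\beta>-\log 2$ type conditions) are exactly what guarantees positivity at $u=0$; and of checking the three smallness hypotheses \eqref{b1} or \eqref{b2} in Lemma~\ref{yoilem}, which in every case reduce to the elementary limits $(\log u)^{-\text{(positive power)}}\to 0$, $u^{-\text{(positive power)}}\to 0$, and their logarithmic-derivative analogues. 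Finally one observes $q=q_{\mathrm{JL}}$ holds automatically by Remark~\ref{Hito} (since $F(u)\to 0$ and $Ff'\to q$ forces $q=\lim(Ff'-q_{\mathrm{JL}})(-\log F)^{-k}\cdot(-\log F)^{k}+q_{\mathrm{JL}}$, i.e. the $(Ff'-q_{\mathrm{JL}})$ term vanishes in the limit), which also confirms \eqref{asf1}. The only mildly delicate point — and the one I would write out most carefully — is the sign and constant tracking when $b$ has a definite concavity: in (iii), (v), (vii), (viii) the quantity $c=b''b'^{-2}$ (resp. $b'ub^{-2}$) is negative, so one must be careful that $\gamma$ comes out with the sign claimed; I expect the main (though still routine) obstacle to be confirming $c'/(b'c)\to 0$ or $c's/c\to 0$ in the borderline cases like (i) with $\nu$ close to $1$ and (vii)--(viii), where $c$ decays only logarithmically and one must differentiate a power of $\log u$ and check the extra factor of $u^{-1}$ (resp. $(\log u)^{-1}$) kills it.
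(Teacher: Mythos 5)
Your proposal follows essentially the same route as the paper: for each item you choose the same (or equivalent, unshifted) exponent $b$, compute $c$, check the smallness hypotheses \eqref{b1} or \eqref{b2}, and read $k$ and $\gamma$ off the asymptotics of Lemma \ref{yoilem}, with $q=q_{\mathrm{JL}}$ handled via Remark \ref{Hito} exactly as in the paper. The only blemishes are intermediate sign slips in items (i)--(ii) (e.g.\ $c\sim -\tfrac{1}{\nu}(\log u)^{1-\nu}$, not $+\tfrac{1}{\nu}(\log u)^{1-\nu}$, and $c=\tfrac{\nu-1}{\nu}u^{-\nu}$ in (ii)), which cancel out in your stated final values of $k$ and $\gamma$ and which you already flag as the point to write out carefully.
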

\begin{proof}
We can easily confirm that all of the above nonlinearities $f$ satisfy \eqref{asf0} and \eqref{asf1} with some $q\ge 1$ depending on $f$. Moreover, by Remark \ref{Hito}, we have $q=q_{\mathrm{JL}}$ if \eqref{asf1} and \eqref{asF1} are satisfied. Hence, it is sufficient to prove \eqref{asF1}. In addition, since $u\to\infty$ if and only if $(u-1)\to\infty$, It suffices to prove 
\begin{equation*}
   \lim_{u\to\infty} (-\log G)^k (Gg'-q_{\mathrm{JL}})= \gamma, \hspace{2mm}\text{where $g(u):=f(u-1)$ and $G(u)=\int_{u}^{\infty}\frac{\,ds}{g(s)}$.}
\end{equation*} 

\textit{Proof of (i).} We define $b=(\log u)^{\nu}$. By a direct calculation, we have
\begin{equation*}
    b'=\nu(\log u)^{\nu-1}u^{-1},\hspace{2mm}b''=\nu u^{-2}(\log u)^{\nu-2}(\nu-1-\log u),\hspace{2mm}c=\frac{-(\log u-\nu+1)}{\nu(\log u)^{\nu}}.
\end{equation*}
Thus, we obtain $b'>0$, $b''\neq 0$ in $(u_0,\infty)$ with some $u_0>0$ and $b\to\infty$ as $u\to\infty$. Moreover, since
\begin{equation*}
  c^{-1} c'b'^{-1}= -(\nu-1)(1+o(1)) u^{-1}(\log u)^{-1} b'^{-1}=o(1)\hspace{4mm}\text{as $u\to\infty$},
\end{equation*}
we obtain \eqref{b1}. Hence, it follows from Lemma \ref{yoilem} that 
\begin{equation*}
    (-\log G)^{k}(Gg'-1)=-(1+o(1))b^k c=\frac{(1+o(1))}{\nu} \hspace{4mm}\text{as $u\to\infty$ with $k=\frac{\nu-1}{\nu}$}.
\end{equation*}

\textit{Proof of (ii).} We define $b=u^{\nu}$ with $0<\nu<1$ or $1<\nu$. By a direct calculation, we obtain
\begin{equation*}
    b'=\nu u^{\nu-1},\hspace{2mm}b''= \nu(\nu-1) u^{\nu-2},
    \hspace{2mm}c=\frac{\nu-1}{\nu}u^{-\nu}.
\end{equation*}
Hence, we have $b'>0$, $b''\neq 0$ in $(u_0,\infty)$ with some $u_0>0$ and $b\to\infty$ as $u\to\infty$. Moreover, since
\begin{equation*}
  c^{-1} c'b'^{-1}= -\nu u^{-1} b'^{-1}=o(1) \hspace{4mm}\text{as $u\to\infty$}, 
\end{equation*}
we can confirm \eqref{b1}. Thus,
it follows from Lemma \ref{yoilem} that 
\begin{equation*}
    (-\log G)(Gg'-1)=-(1+o(1))bc=\frac{(1-\nu)(1+o(1))}{\nu} \hspace{4mm}\text{as $u\to\infty$}.
\end{equation*}

\textit{Proof of (iii).} Without loss of generality, we assume that $\beta \neq 0$. 
We define $b=u-1+\beta u^{\nu}$ with $0<\nu<1$.  By a direct calculation, we get
\begin{equation*}
    b'=1+\beta \nu u^{\nu-1},\hspace{2mm}b''= \beta \nu(\nu-1) u^{\nu-2},
    \hspace{2mm}c=\beta \nu(\nu-1)u^{\nu-2}(1+\beta \nu u^{\nu-1})^{-2}.
\end{equation*}
Hence, we have $b'>0$, $b''\neq 0$ in $(u_0,\infty)$ with some $u_0>0$ and $b\to\infty$ as $u\to\infty$. Moreover, since
\begin{equation*}
  c^{-1} c'b'^{-1}= (\nu-2+o(1)) u^{-1} b'^{-1}=o(1)\hspace{4mm}\text{as $u\to\infty$},
\end{equation*}
we obtain \eqref{b1}. Thus,
it follows from Lemma \ref{yoilem} that 
\begin{equation*}
    (-\log G)^{k}(Gg'-1)=-(1+o(1))b^k c=\beta (1-\nu)\nu(1+o(1))
\end{equation*}
as $u\to\infty$ with $k=2-\nu$.

\textit{Proof of (iv).} We define $b=e^u$. By a direct calculation, we get
\begin{equation*}
    b'=b''=e^u,\hspace{2mm}c=e^{-u}.
\end{equation*}
Hence, we have $b'>0$, $b''\neq 0$ in $(u_0,\infty)$ with some $u_0>0$ and $b\to\infty$ as $u\to\infty$. Moreover, since
\begin{equation*}
  c^{-1} c'b'^{-1}= -b'^{-1}=o(1)\hspace{4mm}\text{as $u\to\infty$},
\end{equation*}
we obtain \eqref{b1}. Therefore, it follows from Lemma \ref{yoilem} that 
\begin{equation*}
    (-\log F)(Ff'-1)=-(1+o(1))bc=-1+o(1) \text{ as $u\to\infty$}.
\end{equation*}

\textit{Proof of (v).} Without loss of generality, we assume that $\nu\neq 0$.
We define $b=u-1+\nu(\log u)$. By a direct calculation, we get
\begin{equation*}
    b'=1+\nu u^{-1}, \hspace{2mm} b''=-\nu u^{-2},\hspace{2mm} c=-\nu u^{-2}(1+\nu u^{-1})^{-2}.
\end{equation*}
Thus, we can deduce that $b'>0$, $b''\neq 0$ in $(u_0,\infty)$ with some $u_0>0$ and $b\to\infty$ as $u\to\infty$. Moreover, since
\begin{equation*}
  c^{-1} c'b'^{-1}=-2(1+o(1))u^{-1}b'^{-1}=o(1)\hspace{4mm}\text{as $u\to\infty$},
\end{equation*}
we can confirm \eqref{b1}. Therefore, it follows from Lemma \ref{yoilem} that 
\begin{equation*}
    (-\log G)^{2}(Gg'-1)=-(1+o(1))b^2c= \nu(1+o(1))\text{ as $u\to\infty$}.
\end{equation*}

\textit{Proof of (vi).} Without loss of generality, we assume that $\beta\neq 0$.
We define $b=1+\beta(\log (1+u))^{-\nu}$. By a direct calculation, we obtain
\begin{equation*}
    b'=-\beta \nu (\log (1+u))^{-(\nu+1)}(1+u)^{-1}, \hspace{2mm}c=-\nu\beta(\log (1+u))^{-(\nu+1)}\frac{u}{(1+u)b^2}.
\end{equation*}
Hence, we can confirm that $b'\neq 0$, $b''\neq 0$ in $(u_0,\infty)$ with some $u_0>0$. Moreover, since 
\begin{equation*}
    c's/c=-(\nu+1+o(1))((\log (1+u))^{-1}-2b^{-1}b'u)=o(1)\hspace{4mm}\text{as $u\to\infty$}, 
\end{equation*}
we obtain \eqref{b2}. Thus, it follows from Lemma \ref{yoilem} that 
\begin{align*}
(-\log G)^{k}(Gg'-q_{\mathrm{JL}})&=-(1+o(1))(p_{\mathrm{JL}}-1)^{k-2}(\log u)^k bc\\
&= \nu(1+o(1))(p_{\mathrm{JL}}-1)^{k-2}\beta
\end{align*}
as $u\to\infty$ with $k=\nu+1$.

\textit{Proof of (vii).} Without loss of generality, we assume that $\beta\neq 0$. We define $b=1+e^{\beta(\log u)^\nu}$.
By a direct calculation, we can deduce that
\begin{equation*}
    b'=\beta \nu (\log u)^{\nu-1}u^{-1} (b-1),\hspace{2mm}c=\nu\beta(\log u)^{\nu-1} \frac{b-1}{b^2}.
\end{equation*}
Therefore, we can confirm that $b'\neq 0$, $b''\neq 0$ in $(u_0,\infty)$ with some $u_0>0$. Moreover, since 
\begin{equation*}
    c's/c=(\nu-1)(\log u)^{-1}-(1+o(1))b^{-1}b'u=o(1)\hspace{4mm}\text{as $u\to\infty$}, 
\end{equation*}
we have \eqref{b2}. Hence, it follows from Lemma \ref{yoilem} that 
\begin{align*}
(-\log G)^{k}(Gg'-q_{\mathrm{JL}})&=-(1+o(1))(p_{\mathrm{JL}}-1)^{k-2}(\log u)^k bc\\
&= -\nu\beta (1+o(1))(p_{\mathrm{JL}}-1)^{k-2}
\end{align*}
as $u\to\infty$ with $k=1-\nu$.

\textit{Proof of (viii).} Without loss of generality, we assume that $\nu\neq 0$.
We define $b=(\log (1+u))^{\nu}$.
By a direct calculation, we get
\begin{equation*}
    b'=\nu (\log (1+u))^{\nu-1}(1+u)^{-1},\hspace{2mm}c=\nu(\log (1+u))^{-\nu-1}\frac{u}{1+u}.
\end{equation*}
Thus, we can confirm that $b'\neq 0$, $b''\neq 0$ in $(u_0,\infty)$ with some $u_0>0$. Moreover, since 
\begin{equation*}
    c's/c=-(\nu+1+o(1))(\log (1+u))^{-1}=o(1)\hspace{4mm}\text{as $u\to\infty$}, 
\end{equation*}
we obtain \eqref{b2}. Thus,
it follows from Lemma \ref{yoilem} that 
\begin{equation*}
(-\log G)(Gg'-q_{\mathrm{JL}})=-(1+o(1))(p_{\mathrm{JL}}-1)^{-1}(\log u) bc= -\nu(1+o(1))(p_{\mathrm{JL}}-1)^{-1}
\end{equation*}
as $u\to\infty$.
\end{proof}
\section{Asymptotic behavior of the singular solution}
In this section, we study the asymptotic behavior and the Morse index of the radial singular solution by using the specific transformation used in \cite{Mi2023} and the fixed point theorem. As a result, we prove Theorem \ref{singularthm}. 

In the following, we always assume that $f$ is extended to $\mathbb{R}$ such that $f\in C^2(\mathbb{R})$ and $f>0$ in $\mathbb{R}$. Then, we can extend a radial  solution  $v\in C^2(0,\varepsilon)$ of
$-\Delta v=f(v)$ on $(0,\varepsilon)$
to $(0,\infty)$ so that $v\in C^2(0,\infty)$ and $v$ satisfies the equation on $(0,\infty)$.

We first quote the following result introduced in \cite{Mi2023}.
\begin{theorem}[see \cite{Mi2023}]
\label{senthm}
Let $N\ge 3$. We assume that $f$ satisfies \eqref{asf0} and \eqref{asf1}. Then, there exists the unique radial singular solution $V\in C^2(0,\infty)$ of 
\begin{equation}
\label{singulareq}
  -\Delta V=f(V) \hspace{4mm}\text{in $(0,\infty)$}
\end{equation}
such that 
\begin{equation*}
    V(r)=F^{-1}(\frac{r^2}{2N-4q}(1+o(1))) \hspace{4mm}\text{as $r\to 0$}.
\end{equation*}
Moreover, the solution $v=v(r,\alpha)\in C^2(0,\infty)\cap C^{0}[0,\infty)$ of the following equation
\begin{equation}
\label{gv}
\left\{
\begin{alignedat}{4}
&v''+\frac{N-1}{r}v'+f(v)=0, \hspace{14mm}0<r<\infty,\\
&v(0)=\alpha, \hspace{2mm} v'(0)=0
\end{alignedat}
\right.
\end{equation}
satisfies 
\begin{equation*}
    v(r,\alpha)\to V \hspace{4mm}\text{in $C^2_{\mathrm{loc}}(0,\infty)$ as $\alpha\to\infty$.} 
\end{equation*}
\end{theorem}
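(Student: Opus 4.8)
Since this result is quoted from \cite{Mi2023}, I only indicate the strategy of proof. The argument has four stages: (1) a change of variables converting the radial equation \eqref{singulareq} near $r=0$ into a first-order system with a hyperbolic equilibrium; (2) a contraction-mapping (stable-manifold) argument producing a singular solution $V$ with the stated first-order asymptotics; (3) a uniqueness argument; and (4) a monotonicity-plus-comparison argument yielding $v(r,\alpha)\to V$.

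For (1), I would set $t=-\log r$, so that $t\to\infty$ corresponds to $r\to0$, and, guided by the target profile $F(V(r))\sim r^2/(2N-4q)$, introduce a pair of scale-invariant unknowns such as
\[
X(t)=\frac{(2N-4q)F(V)}{r^2},\qquad Y(t)=F(V)f'(V)
\]
(or the precise pair used in \cite{Mi2023}). Using \eqref{asf1} and \eqref{sim1} one computes that $(X,Y)$ solves a system $\dot X=\mathcal F(X,Y,t)$, $\dot Y=\mathcal G(X,Y,t)$ whose limit as $t\to\infty$ has the equilibrium $(1,q)$; the bound $1\le q<\frac{N+2}{4}$ (together with $N\ge 3$) ensures that the linearization there is hyperbolic and that the orbits approaching $(1,q)$ which correspond to a genuine (decreasing, blowing-up) singular solution form a single trajectory. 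The crude growth estimates on $f,f',f''$ from Lemma \ref{convexlem} are used to dominate the non-autonomous remainders.

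For (2), I would recast the convergence $(X,Y)\to(1,q)$ as a fixed-point problem for the deviation $(X-1,Y-q)$ in a suitable exponentially weighted $C^0$-space on $[t_0,\infty)$, verify the contraction property for $t_0$ large, and thereby obtain $V\in C^2(0,\varepsilon)$ with $F(V(r))=\tfrac{r^2}{2N-4q}(1+o(1))$, that is $V(r)=F^{-1}\!\bigl(\tfrac{r^2}{2N-4q}(1+o(1))\bigr)$; the extension to $(0,\infty)$ is then automatic as explained before the theorem. For (3), if $V_1,V_2$ are two singular solutions, then by the phase-plane analysis each corresponds to the same single orbit approaching $(1,q)$, and the uniqueness built into the contraction (equivalently, a Gronwall estimate for $V_1-V_2$ from the linearized equation near $r=0$) forces $V_1\equiv V_2$.

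For (4), since $f>0$ the solutions $v(r,\alpha)$ of \eqref{gv} are strictly increasing in $\alpha$ for each fixed $r$ by ODE comparison; moreover $v(r,\alpha)\le V(r)$ on $(0,\infty)$ because near $r=0$ one has $v(r,\alpha)<V(r)$ (as $v(0,\alpha)=\alpha<\infty$ while $V(r)\to\infty$) and a phase-plane/uniqueness argument prevents the two solutions from crossing at any $r>0$. Hence $v(r,\alpha)$ increases to a limit $V_\infty\le V$; local $C^2$ bounds from the ODE and Arzel\`a--Ascoli show $V_\infty$ solves \eqref{singulareq} on $(0,\infty)$, while $v(0,\alpha)=\alpha\to\infty$ and monotonicity force $V_\infty(r)\to\infty$ as $r\to0$, so $V_\infty$ is a singular solution and $V_\infty=V$ by (3); the convergence is upgraded to $C^2_{\mathrm{loc}}(0,\infty)$ by differentiating the equation. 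I expect stages (1)--(2) to be the main obstacle — selecting the transformation so that the limiting system is genuinely hyperbolic with the correct invariant-manifold dimension, and closing the contraction estimate with only the rough bounds of Lemma \ref{convexlem}; a secondary delicate point is the non-crossing of $v(\cdot,\alpha)$ and $V$ used in stage (4).
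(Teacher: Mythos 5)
The paper itself gives no proof of Theorem \ref{senthm}: it is quoted from \cite{Mi2023}, and the only trace of its proof here is the machinery borrowed in Section 3. Your stages (1)--(3) are consistent with that machinery: \cite{Mi2023} indeed works with the quantity $F(V)/r^2$ in the variable $t=-\log r$ (cf.\ the transformations $F(V)/r^2=e^{-x(t)}/(2N-4)$ and $F(V)/r^2=(1+z(t))^{-1/(q_{\mathrm{JL}}-1)}/(2N-4q_{\mathrm{JL}})$ used in this paper), constructs the singular solution by a fixed-point argument near the equilibrium corresponding to $Ff'\to q$, and obtains uniqueness via a Gronwall-type lemma such as \cite[Lemma 4.2]{deng2000}. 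So that part of your outline is a reasonable reconstruction of the cited approach.

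Stage (4), however, contains a genuine error. You claim that $v(r,\alpha)$ is increasing in $\alpha$ for fixed $r$ ``by ODE comparison'' and that $v(\cdot,\alpha)$ never crosses $V$, so that $v(\cdot,\alpha)\uparrow V$. Neither claim holds under the hypotheses of the theorem ($N\ge 3$, $1\le q<\frac{N+2}{4}$). For example, for $f=e^u$ with $3\le N\le 9$ the regular solutions of \eqref{gv} intersect the singular solution of \eqref{singulareq} infinitely many times, and distinct regular solutions cross each other as well; uniqueness for the ODE only forbids tangential contact with equal derivatives, not transversal crossings, and there is no order-preserving structure in $\alpha$. Indeed, this very paper relies on such crossings: Lemma \ref{interlem} produces intersections of $v(\cdot,\alpha)$ with $V$ whenever $V$ is unstable on an annulus, and Theorem \ref{Ithm} uses infinitely many of them; the separation property $v(\cdot,\alpha)<v(\cdot,\beta)<V$ is only established in Lemma \ref{separationlem} under the extra hypothesis that $V$ is stable near the origin, which is exactly the situation excluded in the Type I regime covered by Theorem \ref{senthm}. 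Consequently your monotone-limit construction of $V_\infty$ collapses. The convergence $v(r,\alpha)\to V$ in $C^2_{\mathrm{loc}}(0,\infty)$ must instead be obtained, as in \cite{Mi2023}, from uniform local a priori bounds (of the type provided by Lemma \ref{convexlem}) giving compactness of $\{v(\cdot,\alpha)\}$ in $C^2_{\mathrm{loc}}(0,\infty)$, an argument showing that any limit along $\alpha\to\infty$ is a singular solution, and then the uniqueness statement of part (3) to identify every limit with $V$ --- not from monotone convergence and non-crossing.
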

From Theorem \ref{senthm}, we obtain   
\begin{lemma}
\label{philem}
We assume that $f$ satisfies
\eqref{asf0}, \eqref{asf1} and \eqref{asF1} with $0<k\le 2$ and $\gamma\in \mathbb{R}$. Let $V$ be the radial singular solution of \eqref{singulareq}. Then, it follows that
\begin{equation*}
  \phi(V):=f'(V)F(V)-q_{\mathrm{JL}}=\frac{\gamma+o(1)}{2^k(-\log r)^k}\hspace{4mm}\text{as $r\to 0$.}
\end{equation*}
\end{lemma}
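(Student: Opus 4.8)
The plan is to translate the asymptotics of Theorem~\ref{senthm} into asymptotics for $\phi(V)$ by first sharpening the relation between $V$ and $r$, and then invoking \eqref{asF1}. The starting point is the relation $V(r)=F^{-1}\!\left(\frac{r^2}{2N-4q}(1+o(1))\right)$ from Theorem~\ref{senthm}, which under the assumptions of Lemma~\ref{philem} reads $F(V)=\frac{r^2}{2N-4q_{\mathrm{JL}}}(1+o(1))$ as $r\to0$, since Remark~\ref{Hito} gives $q=q_{\mathrm{JL}}$. Taking logarithms, $-\log F(V)=-2\log r+O(1)=(2+o(1))(-\log r)$ as $r\to0$; equivalently $-\log F(V)\sim -2\log r$, so that $(-\log F(V))^{k}=2^{k}(-\log r)^{k}(1+o(1))$ for any fixed $k>0$. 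This is the one place where a little care is needed, because $k$ appears in a power and errors get amplified, but since the ratio $(-\log F(V))/(-2\log r)\to1$ this causes no trouble for fixed $k\in(0,2]$.

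Next I would simply combine \eqref{asF1} with the displayed asymptotic above. By definition $\phi(V)=f'(V)F(V)-q_{\mathrm{JL}}$, and \eqref{asF1} (applied at the argument $u=V(r)\to\infty$ as $r\to0$, which is legitimate since $V$ is a genuine singular solution) gives
\[
\phi(V)\,(-\log F(V))^{k}=\bigl(f'(V)F(V)-q_{\mathrm{JL}}\bigr)(-\log F(V))^{k}\to\gamma\qquad\text{as }r\to0.
\]
Substituting $(-\log F(V))^{k}=2^{k}(-\log r)^{k}(1+o(1))$ and dividing yields
\[
\phi(V)=\frac{\gamma+o(1)}{2^{k}(-\log r)^{k}}\qquad\text{as }r\to0,
\]
which is exactly the assertion of Lemma~\ref{philem}.

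The only genuine obstacle is making sure that the $(1+o(1))$ inside $F(V)=\frac{r^2}{2N-4q_{\mathrm{JL}}}(1+o(1))$ really does become negligible after raising $-\log F(V)$ to the $k$-th power: one must check $\log(1+o(1))=o(-\log r)$, which is immediate, and then that $\bigl(\frac{-\log F(V)}{-2\log r}\bigr)^{k}\to1$, which follows from continuity of $t\mapsto t^{k}$ at $t=1$. Everything else is bookkeeping: $V(r)\to\infty$ monotonically as $r\to0$ (so the limit in \eqref{asF1} may be composed with $r\mapsto V(r)$), and $F(V)\to0$ so that $-\log F(V)\to+\infty$ and the expressions above are well defined for $r$ small. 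No fixed-point argument or ODE analysis is needed here; the lemma is purely a consequence of Theorem~\ref{senthm} together with the hypothesis \eqref{asF1}.
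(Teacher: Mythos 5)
Your proposal is correct and is exactly the argument the paper intends: Lemma \ref{philem} is stated as an immediate consequence of Theorem \ref{senthm} (which gives $F(V)=\frac{r^2}{2N-4q_{\mathrm{JL}}}(1+o(1))$, hence $-\log F(V)=(2+o(1))(-\log r)$) combined with hypothesis \eqref{asF1} evaluated at $u=V(r)\to\infty$. The details you supply (composing the limit with $V(r)$, checking $(-\log F(V))^k=2^k(-\log r)^k(1+o(1))$) are precisely the bookkeeping the paper leaves implicit.
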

Now, we assume that $f$ satisfies \eqref{asf0}, \eqref{asf1} and \eqref{asF1}. As mentioned in Remark \ref{Hito}, we obtain $q=q_{\mathrm{JL}}$. Hence, Theorem \ref{senthm} and Lemma \ref{philem} imply that 
\begin{equation*}
    f'(V)= \frac{f'(V)F(V)}{F(V)}=\frac{q_{\mathrm{JL}}(2N-4q_\mathrm{JL})}{r^2}(1+o(1))=\frac{(N-2)^2}{4r^2}(1+o(1)) \hspace{4mm}\text{as $r\to 0$}.
\end{equation*}
However, as mentioned in the introduction, this estimate is not sufficient to study the Morse index of the radial singular solution, and thus a refined asymptotic profile of $V$ is needed. Hence, in the following, we aim to prove 
\begin{theorem}
\label{asythm}
We assume that $f$ satisfies \eqref{asf0}, \eqref{asf1} and \eqref{asF1} with $0<k\le 2$ and $\gamma\in \mathbb{R}$. Let $V$ be the unique radial singular solution of \eqref{singulareq}.
Then, we have 
\begin{equation*}
    f'(V)=\frac{(N-2)^2}{4r^2}+\frac{\gamma\sqrt{N-1}+o(1)}{2^{k-2}r^2(-\log r)^k} \hspace{4mm}\text{as $r\to 0$.}
\end{equation*}
\end{theorem}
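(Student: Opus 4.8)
The plan is to follow the transformation strategy of \cite{Mi2023}, which turns the singular ODE \eqref{singulareq} into a problem amenable to a fixed-point argument, and then to track the effect of the refined assumption \eqref{asF1} through that transformation. Concretely, I would introduce the Emden--Fowler-type change of variables $t=-\log r$ (so $r\to0$ corresponds to $t\to\infty$) together with the substitution that replaces $V$ by a suitable normalization of $F(V)$; recall from Theorem \ref{senthm} and Lemma \ref{philem} that $F(V)=\frac{r^2}{2N-4q_{\mathrm{JL}}}(1+o(1))$ and $f'(V)F(V)-q_{\mathrm{JL}}=\frac{\gamma+o(1)}{2^k(-\log r)^k}$ as $r\to0$. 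The quantity one wants to control is $w:=\log\!\bigl(\tfrac{(2N-4q_{\mathrm{JL}})}{r^2}F(V)\bigr)$, or an equivalent error term measuring the deviation of $F(V)$ from its leading profile; the goal \eqref{asythm} is equivalent to showing $w=\dfrac{c_N\gamma+o(1)}{(-\log r)^k}$ for the explicit constant forced by $q_{\mathrm{JL}}=\frac{N-2\sqrt{N-1}}{4}$ and $2N-4q_{\mathrm{JL}}=(\sqrt{N-1}-1)^2+\cdots$ (a short algebraic identity showing the constant $\frac{\sqrt{N-1}}{2^{k-2}}$ is the right one). Using $f'(V)=\bigl(q_{\mathrm{JL}}+\phi(V)\bigr)/F(V)$ and the asymptotics of $F(V)$, the theorem reduces to pinning down the $(1+o(1))$ in $F(V)$ to precision $O((-\log r)^{-k})$.

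Next I would derive the ODE satisfied by the error term. Differentiating $F(V)$ twice (using $F'=-1/f$, $F''=f'/f^2$) and plugging into $V''+\frac{N-1}{r}V'+f(V)=0$ produces, after the change of variable $t=-\log r$, an autonomous-type second-order equation for $y:=F(V)e^{2t}/(2N-4q_{\mathrm{JL}})$ or for its logarithm $w$, in which the coefficient $f'(V)F(V)=q_{\mathrm{JL}}+\phi(V)$ appears linearly. The linearization at the constant solution has indicial roots governed by the characteristic polynomial whose double root (the borderline case) is exactly where $q=q_{\mathrm{JL}}$; this is why at $q=q_{\mathrm{JL}}$ the leading term cancels and the next correction, of size $\phi(V)\sim \gamma\,2^{-k}t^{-k}$, dictates the answer. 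I would then set up the integral (Duhamel) formulation of this equation on a half-line $t\ge T$, treating the $\phi(V)$-term and the nonlinear remainder as a forcing term of size $O(t^{-k})$ (using $0<k\le2$ so the forcing is integrable against the relevant kernels after the double-root normalization), and apply the Banach fixed-point theorem in a weighted space $\{w:\ |w(t)|\le M t^{-k}\}$ to conclude $w(t)=\frac{c_N\gamma+o(1)}{t^k}$ — here Lemma \ref{convexlem}, in particular the polynomial lower bounds on $f',f''$ valid for $N\ge10$ with $p_0<2$, is what makes the nonlinear remainder genuinely lower order.

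The main obstacle I anticipate is the borderline nature of the indicial equation at $q=q_{\mathrm{JL}}$: because the linearized operator has a \emph{double} root, the natural homogeneous solutions are $1$ and $t$ (rather than two decaying exponentials in $t$), so the Duhamel kernel grows linearly and one must verify carefully that the $O(t^{-k})$ forcing, integrated against a linearly-growing kernel, still yields an $O(t^{-k})$ (not $O(t^{1-k})$) response — this requires extracting the precise constant from the forcing, i.e. genuinely using $\phi(V)\to\gamma2^{-k}t^{-k}$ with the sharp constant from Lemma \ref{philem}, and exploiting a cancellation in the particular solution. Equivalently, one chooses the correct ``resonant'' ansatz $w\sim A t^{-k}$, substitutes, and reads off $A$ from matching the $t^{-k}$ coefficients; the fixed-point argument is then run for the remainder $w-At^{-k}$, which must be shown to be $o(t^{-k})$. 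A secondary technical point is justifying that the nonlinear terms coming from expanding $f(V)$, $F(V)$ beyond leading order (which involve $\phi(V)^2$, $\phi(V)'$, and $F F''$-type quantities) are $o(t^{-k})$; for this I would record, as in Lemma \ref{yoilem}-type computations, that \eqref{asF1} together with \eqref{asf1} forces $\phi(V)=O(t^{-k})$ with derivative $\phi(V)'=o(t^{-k})$ along $V$, which is exactly the regularity built into the examples of Lemma \ref{examplelem}. Once $F(V)$ is known to the stated precision, the asymptotic formula for $f'(V)$ follows by dividing $q_{\mathrm{JL}}+\phi(V)$ by $F(V)$ and simplifying the constants.
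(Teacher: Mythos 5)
Your proposal follows essentially the same route as the paper: the transformation of \cite{Mi2023} in the variable $t=-\log r$ applied to a normalization of $F(V)e^{2t}$, Lemma \ref{philem} to convert \eqref{asF1} into a forcing term of size $\frac{\gamma+o(1)}{(2t)^{k}}$, a fixed-point argument in the weighted space $\{\,y:\ \sup_t t^{k}|y(t)|\le C\,\}$ giving the a priori bound $O(t^{-k})$, then subtraction of an explicit particular solution proportional to $\gamma t^{-k}$ and a second pass through the same integral representation to show the remainder is $o(t^{-k})$, and finally the identity $f'(V)=(q_{\mathrm{JL}}+\phi(V))/F(V)$ to read off the constant $\gamma\sqrt{N-1}/2^{k-2}$. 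This is precisely Lemmas \ref{xlem}, \ref{zlem}, Propositions \ref{superprop}, \ref{superprop2} and the concluding computations in the paper.

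Two of your anticipated difficulties are, however, misdiagnosed. First, in these variables the double root of the linearization is not $0$ but $a=1+\sqrt{N-1}>0$ (for $N=10$ the transformed equation is $x''-8x'+16(e^{x}-1)+\phi(V)(x'+2)^{2}=0$, with characteristic polynomial $(\mu-4)^{2}$); the homogeneous solutions are $e^{at}$ and $te^{at}$, not $1$ and $t$. Hence the bounded solution tending to $0$ is unique (this identification of the transformed $V$ with the fixed point, via a lemma of the type \cite[Lemma 4.2]{deng2000}, is a step you should make explicit), and the relevant kernel $(s-t)e^{-a(s-t)}$, integrated over $s\in(t,\infty)$, decays exponentially; there is no resonance with the polynomial decay $t^{-k}$ and no cancellation to exploit, since an $O(s^{-k})$ forcing yields an $O(t^{-k})$ response directly and the coefficient of the particular solution is just the forcing coefficient divided by $a^{2}$, the terms created by differentiating $t^{-k}$ being of order $t^{-k-1}$. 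Second, do not plan on using $\frac{d}{dt}\phi(V)=o(t^{-k})$: this is not a consequence of \eqref{asf1} and \eqref{asF1} (which give no rate for derivatives of $Ff'-q_{\mathrm{JL}}$), and it is not needed, because in the transformation of \cite{Mi2023} the quantity $\phi(V)$ enters the equation only undifferentiated, multiplied by $(x'+2)^{2}$ with $x'\to0$ supplied by the transformation itself, while the genuinely nonlinear remainder ($e^{x}-1-x$, resp. $(z+1)^{p_{\mathrm{JL}}}-1-p_{\mathrm{JL}}z$) is quadratic and hence $O(t^{-2k})=o(t^{-k})$ once the first step is done. With these corrections your outline coincides with the paper's proof.
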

\subsection{The case \texorpdfstring{$N=10$}{LG}}
In this subsection, we assume that $N=10$ and
$f$ satisfies \eqref{asf0}, \eqref{asf1} and \eqref{asF1}.
We define $x\in C^2(0,\infty)$ as 
\begin{equation*}
\label{defx}
\frac{F(V)}{r^2}=\frac{e^{-x(t)}}{2N-4}\hspace{4mm}\text{with $t=-\log r$}.
\end{equation*}
Then, thanks to \cite[Proposition 3.1]{Mi2023},
we can deduce that $x$ satisfies
\begin{equation*}
\label{eqx}
x''-(N-2)x'+(2N-4)(e^x-1)+\phi(V)(x'+2)^2=0,
\end{equation*}
\begin{equation}
\label{cx}
x(t)\to 0 \hspace{4mm}\text{and}\hspace{4mm} x'(t)\to 0\hspace{4mm} \text{as $t\to\infty$.}
\end{equation}
Then, we first prove the following lemma, which implies that $x(t)=O(t^{-k})$.
\begin{lemma}
\label{xlem}
Let $N=10$. We assume that \eqref{asf0}, \eqref{asf1} and \eqref{asF1} with $0<k\le 2$ and
$\gamma\in \mathbb{R}$. Then, $t^k |x(t)|\le 2^{-k}(1+|\gamma|)$ for all $t\in (t_0,\infty)$ with some $t_0>0$.
\end{lemma}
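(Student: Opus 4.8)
The plan is to convert the second-order ODE satisfied by $x$, together with the decay conditions \eqref{cx}, into an integral equation and then run a short bootstrap. Since $N=10$ gives $q_{\mathrm{JL}}=1$ and $2N-4=16$, the equation for $x$ (see \cite{Mi2023}) reads $x''-8x'+16(e^{x}-1)+\phi(V)(x'+2)^{2}=0$, so its linear part $x''-8x'+16x$ has the repeated characteristic root $4$. Rewrite it as $x''-8x'+16x=h(t)$ with
\[
h(t):=-16\bigl(e^{x(t)}-1-x(t)\bigr)-\phi(V(t))\bigl(x'(t)+2\bigr)^{2}.
\]
First I observe $h(t)\to 0$ as $t\to\infty$, because $x,x'\to 0$ by \eqref{cx}, $\phi(V)\to 0$ by Lemma \ref{philem}, and $(x'+2)^{2}\to 4$. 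The substitution $z:=e^{-4t}x$ turns $x''-8x'+16x=h$ into the tame equation $z''=e^{-4t}h$, and $x,x'\to 0$ forces $z,z'\to 0$; integrating twice from $t$ to $\infty$ then gives
\[
x(t)=\int_{t}^{\infty}e^{4(t-s)}(s-t)\,h(s)\,ds,
\]
the integral being absolutely convergent since $h$ is bounded. (Equivalently, all nontrivial homogeneous solutions $(c_{1}+c_{2}t)e^{4t}$ are unbounded, so the decay of $x$ eliminates the homogeneous part and leaves exactly this decaying particular solution.)

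For the estimate, fix a small $\varepsilon>0$. For $t$ large (depending on $\varepsilon$) one has $|x|\le 1$ and $(x'+2)^{2}\le 4+\varepsilon$ on $[t,\infty)$, hence $16|e^{x(s)}-1-x(s)|\le C|x(s)|^{2}$ with an absolute constant $C$, and $|\phi(V(s))|\le(|\gamma|+\varepsilon)2^{-k}s^{-k}$ by Lemma \ref{philem}. Using $\int_{0}^{\infty}u e^{-4u}\,du=1/16$ and $(u+s)^{-k}\le s^{-k}$, the representation yields, with $N(t):=\sup_{s\ge t}|x(s)|$ (which tends to $0$),
\[
|x(s)|\le \frac{C}{16}\,N(s)^{2}+\frac{(|\gamma|+\varepsilon)(4+\varepsilon)}{16\cdot 2^{k}}\,s^{-k}\qquad\text{for }s\ge t.
\]
Taking the supremum over $s\ge t$ and using that $N$ is non-increasing, $N(t)\le \frac{C}{16}N(t)^{2}+\frac{(|\gamma|+\varepsilon)(4+\varepsilon)}{16\cdot 2^{k}}\,t^{-k}$; since $N(t)\to 0$ we get $\frac{C}{16}N(t)\le\frac12$ for $t\ge t_{0}$, whence $N(t)\le \frac{(|\gamma|+\varepsilon)(4+\varepsilon)}{8\cdot 2^{k}}\,t^{-k}$. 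Finally I choose $\varepsilon$ so small that $(|\gamma|+\varepsilon)(4+\varepsilon)\le 8(1+|\gamma|)$ — possible because at $\varepsilon=0$ the left side equals $4|\gamma|<8(1+|\gamma|)$ — which gives $t^{k}|x(t)|\le t^{k}N(t)\le 2^{-k}(1+|\gamma|)$ for $t\ge t_{0}$, as claimed.

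The only delicate point is the first step, extracting the integral representation from the anti-stable linear part. Since the repeated characteristic root is $+4$, every nontrivial homogeneous solution blows up, so the asymptotic data $x,x'\to 0$ genuinely pin down $x$ as the decaying particular solution; the substitution $z=e^{-4t}x$ makes this transparent and simultaneously yields convergence of the defining integral (as $h$ is bounded). After that the argument is a soft fixed-point-type bound whose only quantitative content is the closing comparison of constants, and the margin is comfortable: the produced constant $\frac{(|\gamma|+\varepsilon)(4+\varepsilon)}{8\cdot 2^{k}}$ sits well below $2^{-k}(1+|\gamma|)$, the slack coming from the factor $\int_{0}^{\infty}u e^{-4u}\,du=1/16$ together with $(x'+2)^{2}\to 4$.
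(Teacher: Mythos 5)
Your proof is correct, and it takes a genuinely different route to the key identification step. The paper treats $\phi_1(t)=\phi(V)(x'+2)^2$ as a forcing term, builds a solution of the resulting equation with the decay $t^{-k}$ by a contraction argument in the weighted space $\{\,\sup_{t>t_0}t^k|y(t)|\le 2^{-k}(1+|\gamma|)\,\}$ with the kernel $(t-s)e^{4(t-s)}$, and then identifies it with $x$ via a uniqueness lemma for bounded solutions tending to zero (quoted from \cite{deng2000}). You instead show directly that $x$ itself satisfies the integral equation: since $N=10$ makes the linear part $x''-8x'+16x$ have the double root $+4$, the substitution $z=e^{-4t}x$ reduces it to $z''=e^{-4t}h$, and the data $x,x'\to 0$ from \eqref{cx} plus boundedness of $h$ legitimize integrating twice from $t$ to $\infty$, yielding exactly the paper's kernel representation with $y=x$; after that you close with a sup-bootstrap $N(t)\le \frac{C}{16}N(t)^2+At^{-k}$, absorbing the quadratic term because $N(t)\to 0$, and your final constant $\frac{(|\gamma|+\varepsilon)(4+\varepsilon)}{8\cdot 2^{k}}$ indeed lies below $2^{-k}(1+|\gamma|)$. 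What your route buys is self-containedness and elementarity: no fixed point theorem, no appeal to the external uniqueness lemma, and a slightly sharper constant. What the paper's route buys is a reusable template: the same construct-then-identify scheme is repeated verbatim for $N\ge 11$ (Lemma \ref{zlem}) and for the refinements in Propositions \ref{superprop} and \ref{superprop2}, whereas your direct representation would have to be rederived (though it would work there too, since the relevant characteristic roots are again positive). All the estimates you use ($|e^x-1-x|\le \tfrac{e}{2}x^2$ for $|x|\le 1$, $|\phi(V)|\le(|\gamma|+\varepsilon)2^{-k}t^{-k}$ from Lemma \ref{philem}, $\int_0^\infty ue^{-4u}\,du=\tfrac1{16}$, and $s^{-k}\le t^{-k}$ for $s\ge t$) are justified, so I see no gap.
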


\begin{proof}
We define $\phi_1(t):=\phi(V)(x'+2)^2$. Then, $x\in C^2(0,\infty)$ is a bounded solution of 
\begin{equation}
\label{as1}
    x''-8x'+16(e^{x}-1)+\phi_1(t)=0.
\end{equation}
We claim that $x\in C^2(0,\infty)$ is the unique bounded solution of \eqref{as1} satisfying $x(t)\to 0$. Indeed, if $y$ is another bounded solution of \eqref{as1} satisfying $y(t)\to 0$ as $t\to\infty$, then $z:=y-x$ satisfies 
\begin{equation*}
    z''-8z'+16 e^{x}\frac{e^{z}-1}{z} z=0.
\end{equation*}
Thus, thanks to \cite[Lemma 4.2]{deng2000}, we have $z=0$.

Now, we construct a bounded solution $y\in C^2(t_0, \infty)$ of \eqref{as1} with $y(t)\to 0$ as $t\to 0$. We first find $t_0>0$ such that
\begin{equation}
\label{kei1}
    \frac{e(1+|\gamma|)}{(2t)^k}<\frac{1}{2}, \hspace{4mm} \frac{(1+|\gamma|)2^{-k}e}{t^k}\le \frac{1}{2} \hspace{4mm} \text{for all $t>t_0$}
\end{equation}
and
\begin{equation}
\label{hphi1}
|\phi_1(t)|=\frac{|\gamma|+o(1)}{2^{k}t^{k}}(x'(t)+2)^2\le \frac{|\gamma|+1}{2^{k-3}t^{k}} \hspace{4mm} \text{for all $t>t_0$}
\end{equation}
by using Lemma \ref{philem} and \eqref{cx}, where $o(1)\to 0$ as $t\to\infty$.
Then, we define 
\begin{equation*}
\mathcal{X}_1=:\{y\in C[t_0,\infty); \lVert y \rVert:=\sup_{t>t_0} t^{k}|y(t)| \le (|\gamma|+1)2^{-k}\} 
\end{equation*}
and
\begin{equation*}
    \Phi(y(t)):=\int_{t}^{\infty} (t-s)e^{4(t-s)}[16(e^{y(s)}-1-y(s))+\phi_1(s)]\,ds.
\end{equation*}
Let $y,z\in \mathcal{X}_1$. Then, it follows from \eqref{kei1} that 
\begin{align}
\label{hy1}
\begin{split}
    |(e^{y(t)}-1-y(t))&-(e^{z(t)}-1-z(t))|=|e^{y(t)}-e^{z(t)}-(y(t)-z(t))|\\
&=\frac{1}{2}e^{y+\theta(z-y)}(y-z)^2 \hspace{4mm}\text{with some $0\le \theta(t)\le 1$}\\
&\le \frac{(1+|\gamma|)e|y-z|}{(2t)^k}.
\end{split}
\end{align}
Thus, by combining \eqref{kei1}, \eqref{hphi1} and \eqref{hy1}, we can deduce that
\begin{align*}
   t^k |\Phi(y(t))|&\le t^k\int_{t}^{\infty}(s-t)e^{-4(s-t)} \left(\frac{e(1+|\gamma|)}{2^{k-4}s^k}|y(s)|+\frac{1+|\gamma|}{2^{k-3}s^k}\right)\,ds\\
  &\le (1+|\gamma|)\left(\frac{e(1+|\gamma|)2^{4-2k}}{t^k}+ 2^{3-k}\right)\int_{t}^{\infty}(s-t)e^{-4(s-t)}\,ds\\
  &= (1+|\gamma|)\left(\frac{e(1+|\gamma|)2^{4-2k}}{t^k}+ 2^{3-k}\right)\left[\frac{(s-t)}{4}e^{-4(s-t)}+\frac{e^{-4(s-t)}}{16}\right]^{s=t}_{\infty}\\
  &=(1+|\gamma|)\left(\frac{(1+|\gamma|)e 2^{-2k}}{t^k}+ 2^{-1-k}\right)\le 2^{-k}(1+|\gamma|)
\end{align*}
and
\begin{align*}
t^k|\Phi(y(t))-\Phi(z(t))|&\le t^k\int_{t}^{\infty}\frac{e(1+|\gamma|)}{2^{k-4}s^k}(s-t)e^{-4(s-t)} |y(s)-z(s)|\,ds\\
&\le \int_{t}^{\infty}\frac{e(1+|\gamma|)}{2^{k-4}s^k}(s-t)e^{-4(s-t)} \lVert y-z\rVert \,ds\\
&= \frac{e(1+|\gamma|)}{2^k t^k} \lVert y-z\rVert \le \frac{1}{2}\lVert y-z\rVert
\end{align*}
for all $t>t_0$. 
Therefore, by the fixed point theorem, we can obtain a solution $y(t)\in C^2(t_0,\infty)$ of \eqref{as1} such that 
$t^k |y|\le 2^{-k}(1+|\gamma|)$ for any $t>t_0$. Since the bounded solution of \eqref{as1} is unique, we obtain the result.
\end{proof}
By using Lemma \ref{xlem}, we obtain the following
\begin{proposition}
\label{superprop}
Under the same assumption of Lemma \ref{xlem},
we have 
\begin{equation*}
    t^k\left(x(t)+\frac{\gamma}{2^{k+2}t^k}\right)\to 0 \hspace{4mm}\text{as $t\to\infty$.}
\end{equation*}
\end{proposition}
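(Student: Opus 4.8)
The plan is to bootstrap from the crude bound $t^k|x(t)|\le 2^{-k}(1+|\gamma|)$ obtained in Lemma \ref{xlem} to the sharp first-order asymptotics $x(t)=-\gamma\, 2^{-(k+2)}t^{-k}+o(t^{-k})$. The heuristic is that in the equation $x''-8x'+16(e^x-1)+\phi_1(t)=0$, when $x$ is small the nonlinear term $16(e^x-1)$ is dominated by its linear part $16x$, so $x$ behaves like the bounded solution of the linear equation $x''-8x'+16x=-\phi_1(t)$; since $\phi_1(t)=\phi(V)(x'+2)^2$ and, by Lemma \ref{philem} together with $x'(t)\to0$, we have $\phi_1(t)=\frac{4\gamma+o(1)}{2^k t^k}=\frac{\gamma+o(1)}{2^{k-2}t^k}$, the (slowly varying) particular solution is $\approx \frac{1}{16}\cdot\bigl(-\frac{\gamma}{2^{k-2}t^k}\bigr)=-\frac{\gamma}{2^{k+2}t^k}$, which is exactly the claimed leading term.

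To make this rigorous I would work with the same integral representation used in the proof of Lemma \ref{xlem}, namely writing a bounded solution of $x''-8x'+16x = -16(e^x-1-x)-\phi_1(t)$ via the variation-of-constants formula with the double root at $4$:
\begin{equation*}
x(t)=\int_{t}^{\infty}(s-t)e^{-4(s-t)}\bigl[16\bigl(e^{x(s)}-1-x(s)\bigr)+\phi_1(s)\bigr]\,ds .
\end{equation*}
(Uniqueness of the bounded solution, already established in Lemma \ref{xlem}, guarantees $x$ itself satisfies this.) Now split the bracket: the term $16(e^{x(s)}-1-x(s))=O(x(s)^2)=O(s^{-2k})$ by Lemma \ref{xlem}, and since $\int_t^\infty (s-t)e^{-4(s-t)}s^{-2k}\,ds = O(t^{-2k})=o(t^{-k})$, that contribution is negligible at order $t^{-k}$. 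For the main term, write $\phi_1(s)=\frac{\gamma}{2^{k-2}s^k}+\varepsilon(s)$ with $\varepsilon(s)=o(s^{-k})$; the error integral $\int_t^\infty (s-t)e^{-4(s-t)}\varepsilon(s)\,ds$ is $o(t^{-k})$ by the same kernel estimate (split the integration range at $2t$, say, using $|\varepsilon(s)|\le \eta s^{-k}$ for $s$ large on the near part and the exponential decay plus the a priori bound on the far part). It then remains to show
\begin{equation*}
\int_{t}^{\infty}(s-t)e^{-4(s-t)}\,\frac{\gamma}{2^{k-2}s^k}\,ds = -\frac{\gamma}{2^{k+2}t^k}+o(t^{-k}),
\end{equation*}
i.e. that $\int_t^\infty (s-t)e^{-4(s-t)}s^{-k}\,ds = \tfrac{1}{16}t^{-k}+o(t^{-k})$; substituting $s=t+\tau/4$ and using $\int_0^\infty \tau e^{-\tau}\,d\tau=1$ together with $(t+\tau/4)^{-k}=t^{-k}(1+o(1))$ uniformly for $\tau$ in compact sets (and dominated convergence for the tail) gives exactly this.

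The main obstacle is the handling of the slowly-varying error $\varepsilon(s)=\phi_1(s)-\gamma 2^{-(k-2)}s^{-k}$ inside the convolution: one must confirm that the contribution of $\varepsilon$ is genuinely $o(t^{-k})$ and not merely $O(t^{-k})$. This is a uniformity issue — for any $\eta>0$ pick $T_\eta$ with $|\varepsilon(s)|\le \eta s^{-k}$ for $s>T_\eta$, then for $t>T_\eta$ bound $\int_t^\infty(s-t)e^{-4(s-t)}|\varepsilon(s)|\,ds\le \eta\int_t^\infty (s-t)e^{-4(s-t)}s^{-k}\,ds=O(\eta t^{-k})$, and let $\eta\to0$; the tail beyond $T_\eta$ never enters because $t>T_\eta$. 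The only other point requiring care is that $\phi_1(s)=\phi(V)(x'(s)+2)^2$ really is $\frac{\gamma+o(1)}{2^{k-2}s^k}$: this follows from Lemma \ref{philem} (giving $\phi(V)=\frac{\gamma+o(1)}{2^k s^k}$) and from $x'(t)\to0$, which is part of the hypotheses in \eqref{cx}; expanding $(x'+2)^2=4+4x'+x'^2=4+o(1)$ then yields the factor $4$ that turns $2^k$ into $2^{k-2}$.
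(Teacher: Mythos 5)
Your argument is essentially the paper's: the paper also works with the variation-of-constants kernel for the double root $4$, uses uniqueness of the bounded solution to identify the relevant function with its integral representation, controls $16(e^x-1-x)=O(t^{-2k})=o(t^{-k})$ via Lemma \ref{xlem}, and uses Lemma \ref{philem} together with $x'(t)\to 0$ to get $\phi_1(t)=\frac{\gamma+o(1)}{2^{k-2}t^k}$. The only difference in execution is that the paper first shifts, setting $y=x+\gamma 2^{-(k+2)}t^{-k}$, so that $y$ solves \eqref{as2} with an extra term $\phi_3=o(t^{-k})$ coming from derivatives of the shift, after which the $\varepsilon t^{-k}$ bounds make $t^k y\to 0$ immediate; you instead keep $x$ itself and compute the convolution asymptotics $\int_t^\infty(s-t)e^{-4(s-t)}s^{-k}\,ds=\tfrac1{16}t^{-k}(1+o(1))$, which is a valid substitute and avoids introducing $\phi_3$. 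One sign slip must be fixed, though: the bounded solution of $x''-8x'+16x=g$ is $\int_t^\infty (s-t)e^{-4(s-t)}g(s)\,ds$, and here $g=-16(e^x-1-x)-\phi_1$, so the correct representation is $x(t)=\int_t^\infty (t-s)e^{4(t-s)}\bigl[16(e^{x(s)}-1-x(s))+\phi_1(s)\bigr]\,ds$ (this is the paper's $\Phi$), not the formula you wrote with kernel $(s-t)e^{-4(s-t)}$ multiplying the same bracket. As written, your representation would produce the leading term $+\gamma 2^{-(k+2)}t^{-k}$, which contradicts your own display asserting that $\int_t^\infty(s-t)e^{-4(s-t)}\frac{\gamma}{2^{k-2}s^k}\,ds=-\frac{\gamma}{2^{k+2}t^k}+o(t^{-k})$ (the left side is positive for $\gamma>0$). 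With the kernel corrected, all your estimates — the $O(t^{-2k})$ quadratic remainder, the uniform handling of $\varepsilon(s)=\phi_1(s)-\gamma 2^{-(k-2)}s^{-k}=o(s^{-k})$, and the dominated-convergence evaluation of the kernel integral — go through and give exactly the stated conclusion.
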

\begin{proof}
We define 
\begin{equation*}
    \phi_2(t):=\phi_1(t)-\frac{\gamma}{2^{k-2}t^k}\hspace{4mm}\text{and} \hspace{4mm}y=x+\frac{\gamma}{2^{k+2}t^{k}},
\end{equation*}
where $\phi_1(t)$ is that in the proof of Lemma \ref{xlem}. Then, $y$ is a solution of 
\begin{equation}
\label{as2}
   y''-8y'+16y+16(e^x-1-x)+\phi_2(t)+\phi_3(t)=0,
\end{equation}
where 
\begin{equation*}
    \phi_3(t)= 8\left(\frac{\gamma}{2^{k+2}t^{k}}\right)'-\left(\frac{\gamma}{2^{k+2}t^{k}}\right)''.
\end{equation*}
Here, by a similar argument to that in the proof of Lemma \ref{xlem}, we can deduce that $y$ is the unique bounded solution of \eqref{as2} satisfying $y(t)\to 0$. Thus, we can verify that
\begin{equation*}
    y(t)=\int_{t}^{\infty} (t-s)e^{4(t-s)}[16(e^x-1-x)+\phi_2(s)+\phi_3 (s)]\,ds.
\end{equation*}
Let $\varepsilon>0$. Then, thanks to Lemma \ref{xlem}, there exists $0\le \theta(t)\le 1$ such that 
\begin{align*}
    |e^{x}-1-x|\le e^{\theta x}x^2\le \frac{e(1+|\gamma|)^2}{(2t)^{2k}}\le \frac{\varepsilon}{2t^k} 
\end{align*}
for all $t$ sufficiently large. Moreover, thanks to Lemma \ref{philem} and \eqref{cx}, we obtain 
\begin{equation*}
|\phi_2(t)|+|\phi_3(t)|\le \frac{\varepsilon}{t^k}  
\end{equation*}
for all $t$ sufficiently large. We choose $t_0>0$ such that the above estimates are satisfied for all $t>t_0$. Then, it follows from the above two estimates that
\begin{align*}
   t^k|y(t)|&\le t^k\int_{t}^{\infty}(s-t)e^{-4(s-t)} \left(\frac{8\varepsilon}{s^{k}}+\frac{\varepsilon}{s^k}\right)\,ds\\
  &\le 9\varepsilon
  \int_{t}^{\infty}(s-t)e^{-4(s-t)}\,ds\\
  &=\frac{9\varepsilon}{16}\le \varepsilon
\end{align*}
for all $t>t_0$ if $t_0$ is sufficiently large. Therefore, the conclusion follows.
\end{proof}
\begin{proof}[Proof of Theorem \ref{asythm} in the case $N=10$.]
Thanks to Lemma \ref{philem} and Proposition \ref{superprop}, it follows that 
\begin{align*}
    f'(V)&=\frac{1}{F(V)}\left(1+\frac{\gamma+o(1)}{(-2\log r)^k}\right)\\
    &=16e^{x(t)}r^{-2}\left(1+\frac{\gamma+o(1)}{(-2\log r)^k}\right)\\
    &=16r^{-2}\left(1+\frac{\gamma+o(1)}{(-2\log r)^k}\right)\exp\left[-\frac{\gamma+o(1)}{2^{k+2}(-\log r)^k}\right]\\
    &=16r^{-2}\left(1+\frac{\gamma+o(1)}{(-2\log r)^k}\right)
    \left(1-\frac{\gamma+o(1)}{2^{k+2}(-\log r)^k}\right)\\
    &=16r^{-2}+\frac{3\gamma+o(1)}{2^{k-2}r^2(-\log r)^k} \hspace{4mm}\text{as $r\to 0$.}
\end{align*}
Hence, the conclusion follows from the fact that $q_{\mathrm{JL}}=1$ if $N=10$. 
\end{proof}
\subsection{The case \texorpdfstring{$N\ge 11$}{LG}} In this subsection, we assume that $N\ge 11$ and
$f$ satisfies \eqref{asf0}, \eqref{asf1} and \eqref{asF1}. We define $z\in C^2(0,\infty)$ by 
\begin{equation*}
\label{defz}
\frac{F(V)}{r^2}=\frac{(1+z(t))^{-1/(q_{\mathrm{JL}}-1)}}{2N-4q_{\mathrm{JL}}}\hspace{4mm}\text{with $t=-\log r$}.
\end{equation*}
Then, thanks to \cite[Proposition 3.1]{Mi2023},
we can deduce that $z$ satisfies
\begin{align*}
\label{eqz}
z''-(N&+2-4q_{\mathrm{JL}})z'+(q-1)(2N-4q_{\mathrm{JL}})((z+1)^{p_{\mathrm{JL}}}-1-z)\\
&+(q_{\mathrm{JL}}-1)\phi(V)\left(\frac{z'}{(q_{\mathrm{JL}}-1)(z+1)}+2\right)^2(z+1)=0,
\end{align*}
\begin{equation}
\label{cz}
z(t)\to 0 \hspace{4mm}\text{and}\hspace{4mm} z'(t)\to 0\hspace{4mm} \text{as $t\to\infty$,}
\end{equation}
where $p_{\mathrm{JL}}=q_{\mathrm{JL}}/(q_{\mathrm{JL}}-1)$. We prove the following
\begin{lemma}
\label{zlem}
Let $N>10$. We assume that $f$ satisfies \eqref{asf0}, \eqref{asf1} and \eqref{asF1} with $q=q_{\mathrm{JL}}$ and
some $0<k\le 2$ and
$\gamma\in \mathbb{R}$. Then, $t^k z(t)$ is bounded in $(t_0,\infty)$ with some $t_0>0$.
\end{lemma}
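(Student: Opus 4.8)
The plan is to follow the proof of Lemma~\ref{xlem} (the $N=10$ case), with the exponential nonlinearity there replaced by the power nonlinearity $(z+1)^{p_{\mathrm{JL}}}$ occurring in the equation for $z$. Throughout we use $q=q_{\mathrm{JL}}$ (guaranteed by Remark~\ref{Hito}).

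First I would display the linear part of the equation for $z$. Set $\mu:=1+\sqrt{N-1}$. Since $4q_{\mathrm{JL}}=N-2\sqrt{N-1}$, one checks directly that
\begin{equation*}
N+2-4q_{\mathrm{JL}}=2\mu,\qquad 2N-4q_{\mathrm{JL}}=N+2\sqrt{N-1}=\mu^{2},
\end{equation*}
and, because $p_{\mathrm{JL}}=q_{\mathrm{JL}}/(q_{\mathrm{JL}}-1)$ gives $(q_{\mathrm{JL}}-1)(p_{\mathrm{JL}}-1)=1$, the linear part in $z$ of $(q_{\mathrm{JL}}-1)(2N-4q_{\mathrm{JL}})\big((z+1)^{p_{\mathrm{JL}}}-1-z\big)$ is exactly $\mu^{2}z$. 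Writing $g(z):=(q_{\mathrm{JL}}-1)\mu^{2}\big((z+1)^{p_{\mathrm{JL}}}-1-p_{\mathrm{JL}}z\big)=O(z^{2})$ for the quadratic remainder and $\psi(t):=(q_{\mathrm{JL}}-1)\phi(V)\big(\tfrac{z'}{(q_{\mathrm{JL}}-1)(z+1)}+2\big)^{2}(z+1)$ for the last term (regarded as a fixed forcing, since $z$ already exists), the equation for $z$ becomes $z''-2\mu z'+\mu^{2}z+g(z)+\psi(t)=0$. The operator $(\tfrac{d}{dt}-\mu)^{2}$ has the positive double root $\mu$ — the exact analogue of the $(\lambda-4)^{2}$ factor in the $N=10$ case, which is precisely why $q=q_{\mathrm{JL}}$ is the borderline regime — so its Green's function producing the solution decaying at $+\infty$ is $(t-s)e^{\mu(t-s)}$, with $\int_{t}^{\infty}(s-t)e^{-\mu(s-t)}\,ds=\mu^{-2}$.

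The rest runs as in Lemma~\ref{xlem}. First, $z$ is the unique bounded solution of this equation with $z(t)\to0$: the difference $\zeta$ of two such solutions satisfies, by the mean value theorem, $\zeta''-2\mu\zeta'+c(t)\zeta=0$ with $c(t)\to(q_{\mathrm{JL}}-1)\mu^{2}(p_{\mathrm{JL}}-1)=\mu^{2}$, hence $\zeta\equiv0$ by \cite[Lemma 4.2]{deng2000} applied in the degenerate case $c(\infty)=\mu^{2}$. To produce a good solution, use Lemma~\ref{philem} and \eqref{cz} (so $z,z'\to0$, whence $\big(\tfrac{z'}{(q_{\mathrm{JL}}-1)(z+1)}+2\big)^{2}(z+1)\to4$ and $\phi(V)=(\gamma+o(1))2^{-k}t^{-k}$) to fix $t_{0}>0$ large with $|\psi(t)|\le C_{1}(1+|\gamma|)t^{-k}$ on $(t_{0},\infty)$, and run a contraction argument for
\begin{equation*}
\Phi(w)(t):=\int_{t}^{\infty}(t-s)e^{\mu(t-s)}\big[g(w(s))+\psi(s)\big]\,ds
\end{equation*}
on $\mathcal{X}:=\{w\in C[t_{0},\infty):\ \|w\|:=\sup_{t>t_{0}}t^{k}|w(t)|\le M\}$ with $M:=2\mu^{-2}C_{1}(1+|\gamma|)$. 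The bounds $|g(w)|\le C_{2}w^{2}$ and $|g(w)-g(v)|\le C_{3}(|w|+|v|)|w-v|$ for $|w|,|v|$ small, together with $|w(s)|\le Mt^{-k}$ for $s\ge t$ and $\int_{t}^{\infty}(s-t)e^{-\mu(s-t)}\,ds=\mu^{-2}$, give $t^{k}|\Phi(w)(t)|\le \mu^{-2}C_{2}M^{2}t_{0}^{-k}+\tfrac12 M\le M$ and $t^{k}|\Phi(w)(t)-\Phi(v)(t)|\le 2\mu^{-2}C_{3}Mt_{0}^{-k}\|w-v\|\le\tfrac12\|w-v\|$ once $t_{0}$ is large. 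Banach's fixed point theorem then yields $w\in\mathcal{X}$ with $w=\Phi(w)$; differentiating the integral identity shows $w\in C^{2}(t_{0},\infty)$ solves the equation, and $w\in\mathcal{X}$ forces $w(t)\to0$. By the uniqueness above, $w=z$ on $(t_{0},\infty)$, so $t^{k}|z(t)|\le M$ there, which is the claim.

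I expect no conceptual obstacle: the only real work is keeping the $N$-dependent constants organised so that the self-map and contraction smallness conditions can be met simultaneously by taking $t_{0}$ large. The genuinely load-bearing points — the identities $N+2-4q_{\mathrm{JL}}=2\mu$, $2N-4q_{\mathrm{JL}}=\mu^{2}$ and $(q_{\mathrm{JL}}-1)(p_{\mathrm{JL}}-1)=1$, and the applicability of \cite[Lemma 4.2]{deng2000} in the degenerate double-root case — are immediate and are the exact counterparts of what was used for $N=10$.
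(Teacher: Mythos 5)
Your proposal is correct and follows essentially the same route as the paper: the paper likewise treats $\phi_1$ (your $\psi$) as a fixed forcing with the $t^{-k}$ bound from Lemma \ref{philem} and \eqref{cz}, proves uniqueness of the bounded decaying solution via \cite[Lemma 4.2]{deng2000}, and runs a contraction argument for the map $\Phi(y)(t)=\int_t^{\infty}(t-s)e^{a(t-s)}\bigl[\tfrac{a^2}{p_{\mathrm{JL}}-1}\bigl((y+1)^{p_{\mathrm{JL}}}-1-p_{\mathrm{JL}}y\bigr)+\phi_1(s)\bigr]\,ds$ with $a=1+\sqrt{N-1}$ on the ball $\{\sup_{t>t_0}t^k|y|\le C^{*}\}$, exactly your kernel and quadratic-remainder splitting. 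The only difference is presentational (you extract the linear term $\mu^2 z$ explicitly and organize the constants as $M$, $C_1$, $C_2$, $C_3$ rather than the paper's explicit $C^{*}$), so there is no gap.
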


\begin{proof}
We define 
\begin{equation*}
    \phi_1(t):=(q_{\mathrm{JL}}-1)\phi(V)\left(\frac{z'}{(q_{\mathrm{JL}}-1)(z+1)}+2\right)^2(z+1).
\end{equation*}
Then, $z\in C^2(0,\infty)$ is a bounded solution of 
\begin{equation}
\label{as1-2}
    z''- 2az'+\frac{a^2}{p_{\mathrm{JL}}-1}((z+1)^{p_{\mathrm{JL}}}-1-z)+\phi_1(t)=0
\end{equation}
with $a=\frac{N+2-4q_{\mathrm{JL}}}{2}=1+\sqrt{N-1}>0$.

We claim that $z\in C^2(0,\infty)$ is the unique bounded solution of \eqref{as1-2} satisfying $z(t)\to 0$ as $t\to\infty$. Indeed, if $y$ is another bounded solution of \eqref{as1-2} satisfying $y(t)\to 0$ as $t\to\infty$, then $w:=y-z$ satisfies 
\begin{equation*}
    w''-2aw'+\frac{a^2}{p_{\mathrm{JL}}-1} \left(\frac{(y+1)^{p_{\mathrm{JL}}}-(z+1)^{p_{\mathrm{JL}}}}{y-z}-1\right) w=0.
\end{equation*}
Since
\begin{equation*}
    \frac{(y+1)^{p_{\mathrm{JL}}}-(z+1)^{p_{\mathrm{JL}}}}{y-z}=p_{\mathrm{JL}}(z+\theta w+1)^{p_{\mathrm{JL}}-1} \hspace{4mm}\text{with some $0\le \theta(t)\le 1$},
\end{equation*}
thanks to \cite[Lemma 4.2]{deng2000}, we have $w=0$.

Now, we construct a bounded solution $y\in C^2(t_0, \infty)$ of \eqref{as1-2} with $y(t)\to 0$ as $t\to 0$. Then, thanks to Lemma \ref{philem} and \eqref{cz}, we can take $t_0>0$ so that
\begin{equation}
\label{kei2}
  \frac{{C^*}^2 a^2 p_{\mathrm{JL}} 2^{|p_{\mathrm{JL}}-2|}}{t^k}<(1+|\gamma|)q_{\mathrm{JL}}2^{2-k},\hspace{4mm}\frac{C^{*}p_{\mathrm{JL}} 2^{|p_{\mathrm{JL}}-2|}}{t^k}<\frac{1}{2} \hspace{4mm}\text{for all $t>t_0$}
\end{equation}
and
\begin{equation}
\label{hphi1-2}
|\phi_1(t)|\le \frac{(|\gamma|+o(1))(q_{\mathrm{JL}}-1+o(1))}{2^{k-2}t^k}\le \frac{(1+|\gamma|)q_{\mathrm{JL}}}{2^{k-2}t^k}   \hspace{4mm}\text{for all $t>t_0$},
\end{equation}
where $C^{*}:=2^{3-k}a^{-2}(1+|\gamma|)q_{\mathrm{JL}}$ and $o(1)\to 0$ as $t\to \infty$.
We define 
\begin{equation*}
\mathcal{X}_1=\{y\in C[t_0,\infty); \lVert y \rVert:=\sup_{t>t_0} t^{k}|y(t)| \le  C^{*}\} 
\end{equation*}
and
\begin{equation*}
    \Phi(y(t)):=\int_{t}^{\infty} (t-s)e^{a(t-s)}\left[\frac{a^2}{p_{\mathrm{JL}}-1}\left((y(s)+1)^{p_{\mathrm{JL}}}-1-p_{\mathrm{JL}}y(s)\right)+\phi_1(s)\right]\,ds.
\end{equation*}
Let $y,w\in \mathcal{X}_1$. Then, 
it follows from \eqref{kei2} that there exists $0\le \theta(t)\le 1$ such that
\begin{align}
\label{hy1-2}
\begin{split}
   |((y(t)+1)^{p_{\mathrm{JL}}}-&1-p_{\mathrm{JL}}y(t))-((w(s)+1)^{p_{\mathrm{JL}}}-1-p_{\mathrm{JL}}w(t))|\\
   &=|(y+1)^{p_{\mathrm{JL}}}-(w+1)^{p_{\mathrm{JL}}}-p_{\mathrm{JL}}(y-w)|\\
   &=\frac{p_{\mathrm{JL}}(p_{\mathrm{JL}}-1)(y+\theta(w-y)+1)^{p_{\mathrm{JL}}-2}}{2}(y-w)^2 \\
   &\le p_{\mathrm{JL}}(p_{\mathrm{JL}}-1)2^{|p_{\mathrm{JL}}-2|-1}(y-w)^2\\
   &\le C^{*} p_{\mathrm{JL}}(p_{\mathrm{JL}}-1)2^{|p_{\mathrm{JL}}-2|} t^{-k}|y-w|
\end{split}
\end{align}
for all $t>t_0$ if $t_0$ is sufficiently large.
Thus, from \eqref{hy1-2} and \eqref{hphi1-2}, we obtain 
\begin{align*}
   t^k &|\Phi(y(t))|\le t^k\int_{t}^{\infty}(s-t)e^{-a(s-t)} \left(\frac{C^{*}p_{\mathrm{JL}}a^2 2^{|p_{\mathrm{JL}}-2|}}{s^k}|y(s)|+\frac{(1+|\gamma|) q_{\mathrm{JL}}}{2^{k-2}s^k}\right)\,ds\\
  &\le \left(\frac{{C^*}^2 p_{\mathrm{JL}}a^2 2^{|p_{\mathrm{JL}}-2|}}{t^k}+ (1+|\gamma|) q_{\mathrm{JL}}2^{2-k}\right)\int_{t}^{\infty}(s-t)e^{-a(s-t)}\,ds\\
  &= \left(\frac{{C^*}^2 p_{\mathrm{JL}}a^2 2^{|p_{\mathrm{JL}}-2|}}{t^k}+ (1+|\gamma|) q_{\mathrm{JL}}2^{2-k}\right)\left[\frac{(s-t)}{a}e^{-a(s-t)}+\frac{e^{-a(s-t)}}{a^2}\right]^{s=t}_{\infty}\\
  &=\frac{1}{a^2}\left(\frac{{C^*}^2 p_{\mathrm{JL}}a^2 2^{|p_{\mathrm{JL}}-2|}}{t^k}+ (1+|\gamma|) q_{\mathrm{JL}}2^{2-k}\right)\le \frac{2^{3-k}(1+|\gamma|)q_{\mathrm{JL}}}{a^2}
\end{align*}
and
\begin{align*}
t^k|\Phi(y(t))-\Phi(z(t))|&\le t^k\int_{t}^{\infty}\frac{C^{*}p_{\mathrm{JL}}a^2 2^{|p_{\mathrm{JL}}-2|}}{s^k}(s-t)e^{-a(s-t)} |y(s)-z(s)|\,ds\\
&\le \int_{t}^{\infty}\frac{C^{*}p_{\mathrm{JL}}a^2 2^{|p_{\mathrm{JL}}-2|}}{s^k}(s-t)e^{-a(s-t)} \lVert y-z\rVert \,ds\\
&= \frac{C^{*}p_{\mathrm{JL}} 2^{|p_{\mathrm{JL}}-2|}}{t^k} \lVert y-z\rVert \le \frac{1}{2}\lVert y-z\rVert
\end{align*}
for all $t>t_0$. 
Therefore, by the fixed point theorem, we can get a solution $y(t)\in C^2(t_0,\infty)$ of \eqref{as1-2} such that $t^k |y|\le 2^{3-k}a^{-2}(1+|\gamma|)q_{\mathrm{JL}}$. Since the bounded solution of \eqref{as1-2} is unique, we obtain the result.
\end{proof}
Thanks to Lemma \ref{zlem}, we obtain
\begin{proposition}
\label{superprop2}
Under the same assumptions of Lemma \ref{zlem}, we have 
\begin{equation*}
t^k\left(z(t)+\frac{\gamma(q_{\mathrm{JL}}-1)}{a^2 2^{k-2}t^k}\right)\to 0 \hspace{4mm} \text{as $t\to\infty$.}
\end{equation*}
\end{proposition}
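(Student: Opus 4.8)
The plan is to mirror exactly the argument used in the proof of Proposition \ref{superprop}, now adapted to the equation \eqref{as1-2} rather than \eqref{as1}. Concretely, I introduce the shifted unknown
\begin{equation*}
    y(t) := z(t) + \frac{\gamma(q_{\mathrm{JL}}-1)}{a^2 2^{k-2} t^k},
\end{equation*}
which is the natural ``first-order corrector'': the constant $\gamma(q_{\mathrm{JL}}-1)/(a^2 2^{k-2})$ is chosen so that the leading $t^{-k}$ contribution of $\phi_1(t)$ (computed via Lemma \ref{philem} and \eqref{cz}, which give $\phi_1(t) = (\gamma+o(1))(q_{\mathrm{JL}}-1)\,2^{2-k}t^{-k} + o(t^{-k})$) is cancelled by the linear term $-2ay' + \tfrac{a^2}{p_{\mathrm{JL}}-1}\cdot p_{\mathrm{JL}}y$ evaluated at the corrector, i.e.\ by the zeroth-order part $a^2 y$ of the linearization of $\tfrac{a^2}{p_{\mathrm{JL}}-1}((z+1)^{p_{\mathrm{JL}}}-1-z)$ around $z=0$. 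Substituting, $y$ solves an equation of the form
\begin{equation*}
    y'' - 2ay' + a^2 y + \frac{a^2}{p_{\mathrm{JL}}-1}\bigl((z+1)^{p_{\mathrm{JL}}} - 1 - p_{\mathrm{JL}}z\bigr) + \phi_2(t) + \phi_3(t) = 0,
\end{equation*}
where $\phi_2(t) = \phi_1(t) - \gamma(q_{\mathrm{JL}}-1)2^{2-k}t^{-k}$ collects the subleading part of $\phi_1$ and $\phi_3(t)$ collects the derivative terms $-2a(\gamma(q_{\mathrm{JL}}-1)/(a^2 2^{k-2}t^k))' + (\cdots)''$ coming from differentiating the corrector; both satisfy $|\phi_2(t)| + |\phi_3(t)| = o(t^{-k})$.

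Next I record that $y$ is the unique bounded solution of its equation tending to $0$, by the same comparison/uniqueness argument as in Lemma \ref{zlem} (subtract two solutions, linearize the $(z+1)^{p_{\mathrm{JL}}}$ nonlinearity by the mean value theorem, and invoke \cite[Lemma 4.2]{deng2000}). Consequently $y$ admits the integral representation
\begin{equation*}
    y(t) = \int_t^{\infty} (t-s)e^{a(t-s)}\left[\frac{a^2}{p_{\mathrm{JL}}-1}\bigl((z+1)^{p_{\mathrm{JL}}} - 1 - p_{\mathrm{JL}}z\bigr) + \phi_2(s) + \phi_3(s)\right] ds,
\end{equation*}
the same Green's-function formula as for \eqref{as1-2} (the double root $a$ of the characteristic polynomial $\mu^2 - 2a\mu + a^2$ produces the kernel $(t-s)e^{a(t-s)}$). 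Then, fixing $\varepsilon > 0$: by Lemma \ref{zlem} we have $t^k|z(t)| \le C^{*}$, so the quadratic remainder satisfies $|(z+1)^{p_{\mathrm{JL}}} - 1 - p_{\mathrm{JL}}z| \le \tfrac12 p_{\mathrm{JL}}(p_{\mathrm{JL}}-1)2^{|p_{\mathrm{JL}}-2|}z^2 \le C(C^{*})^2 t^{-2k} \le \tfrac{\varepsilon}{2}t^{-k}$ for $t$ large; and $|\phi_2| + |\phi_3| \le \tfrac{\varepsilon}{2}t^{-k}$ for $t$ large. Plugging these into the integral and using $\int_t^{\infty}(s-t)e^{-a(s-t)}\,ds = a^{-2}$ gives $t^k|y(t)| \le C'\varepsilon$ for all $t > t_0$, with $C'$ depending only on $a$ and $p_{\mathrm{JL}}$. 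Since $\varepsilon$ is arbitrary, $t^k y(t) \to 0$, which is the assertion.

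The only point requiring genuine care — as opposed to routine transcription of the $N=10$ case — is the bookkeeping of the constant in the corrector: one must verify that $\gamma(q_{\mathrm{JL}}-1)/(a^2 2^{k-2})$ is precisely the value for which the $t^{-k}$ terms cancel, i.e.\ that $a^2 \cdot \gamma(q_{\mathrm{JL}}-1)/(a^2 2^{k-2}) = \gamma(q_{\mathrm{JL}}-1)2^{2-k}$ matches the leading coefficient of $\phi_1(t)$ extracted from Lemma \ref{philem}. This is a short computation, but it is where an error would propagate; everything else (the uniqueness of the bounded solution, the integral representation, the $o(t^{-k})$ estimates on the nonlinear remainder and on $\phi_2, \phi_3$, and the final fixed-point-free direct estimate) is structurally identical to Lemma \ref{zlem} and Proposition \ref{superprop}. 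I expect no essential obstacle beyond this constant-matching.
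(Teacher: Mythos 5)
Your proposal is correct and takes essentially the same route as the paper's proof: the same corrector $y=z+\gamma(q_{\mathrm{JL}}-1)/(a^2 2^{k-2}t^{k})$, the same splitting into $\phi_2$ and $\phi_3$, uniqueness of the bounded solution tending to zero, the kernel $(t-s)e^{a(t-s)}$ from the double root $a$, and the $o(t^{-k})$ estimates on the quadratic remainder and on $\phi_2,\phi_3$, with the constant-matching $a^2\cdot\gamma(q_{\mathrm{JL}}-1)/(a^2 2^{k-2})=\gamma(q_{\mathrm{JL}}-1)2^{2-k}$ exactly as in the paper. The only nitpick is the sign in your description of $\phi_3$ (the paper's is $2ah'-h''$ for the corrector $h$, opposite to what you wrote), which is immaterial because only the bound $|\phi_3|=o(t^{-k})$ is used.
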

\begin{proof}
We define 
\begin{equation*}
   \phi_2(t):=\phi_1(t)-\frac{(q_{\mathrm{JL}}-1)\gamma}{2^{k-2}t^k}\hspace{4mm} \text{and} \hspace{4mm} y=z+\frac{\gamma(q_{\mathrm{JL}}-1)}{a^2 2^{k-2}t^{k}},
\end{equation*}
where $\phi_1$ is that in the proof of Lemma \ref{zlem}. Then, $y$ is a solution of 
\begin{equation}
\label{as2-2}
   y''-2ay'+a^2 y +\frac{a^2}{p_{\mathrm{JL}}-1}((z+1)^{p_{\mathrm{JL}}}-1-p_{\mathrm{JL}}z)+\phi_2(t)+\phi_3(t)=0,
\end{equation}
where $a$ is that in the proof of Lemma \ref{zlem} and \begin{equation*}
\phi_3(t)=2a\left(\frac{\gamma(q_{\mathrm{JL}}-1)}{a^2 2^{k-2}t^{k}}\right)'-\left(\frac{\gamma(q_{\mathrm{JL}}-1)}{a^2 2^{k-2}t^{k}}\right)''.
\end{equation*}
Here, by a similar argument to that in the proof of Lemma \ref{zlem}, we can deduce that $y$ is the unique bounded solution of \eqref{as2-2} satisfying $y(t)\to 0$ as $t\to\infty$. 
Thus, we can confirm that
\begin{equation*}
    y(t):=\int_{t}^{\infty} (t-s)e^{a(t-s)}[\frac{a^2}{p_{\mathrm{JL}}-1}((z+1)^{p_{\mathrm{JL}}}-1-p_{\mathrm{JL}}z)+\phi_2(s)+\phi_3(s)]\,ds.
\end{equation*}
Let $\varepsilon>0$. Then, thanks to Lemma \ref{zlem}, we have
\begin{align*}
\frac{a^2}{p_{\mathrm{JL}}-1}|(z+1)^{p_{\mathrm{JL}}}-1-p_{\mathrm{JL}}z|\le a^2 p_{\mathrm{JL}} 2^{|p_{\mathrm{JL}}-2|}z^2\le \frac{\varepsilon}{t^k}
\end{align*}
for all $t$ sufficiently large. Moreover, thanks to Lemma \ref{philem} and \eqref{cz}, we obtain 
\begin{equation*}
|\phi_2(t)|+|\phi_3(t)|\le \frac{\varepsilon}{t^k}
\end{equation*}
for all $t$ sufficiently large. We choose $t_0$ such that the above two estimates are satisfied for $t>t_0$.
Therefore, it follows from the above two estimates that
\begin{align*}
   t^k |y(t)|&\le t^k \int_{t}^{\infty}(s-t)e^{-a(s-t)} \frac{2\varepsilon}{s^k}\,ds\\
  &\le 2\varepsilon
  \int_{t}^{\infty}(s-t)e^{-a(s-t)}\,ds \le \frac{2\varepsilon}{a^2}
\end{align*}
for all $t>t_0$ if $t_0$ is sufficiently large. Therefore, we obtain the result.
\end{proof}

\begin{proof}[Proof of Theorem \ref{asythm} in the case $N\ge 11$.]
Thanks to Lemma \ref{philem} and Proposition \ref{superprop2}, it follows that 
\begin{align*}
f'(V)&=\frac{1}{F(V)}\left(q_{\mathrm{JL}}+\frac{\gamma+o(1)}{(-2\log r)^k}\right)\\
&=a^2r^{-2}\left(q_{\mathrm{JL}}+\frac{\gamma+o(1)}{(-2\log r)^k}\right)(1+z(t))^{1/(q_{\mathrm{JL}}-1)}\\
    &=a^2 r^{-2}\left(q_{\mathrm{JL}}+\frac{\gamma+o(1)}{(-2\log r)^k}\right)\left(1-\frac{(q_{\mathrm{JL}}-1)(\gamma+o(1))}{2^{k-2}a^2(-\log r)^k}\right)^{1/(q_{\mathrm{JL}}-1)}\\
    &=\frac{(N-2)^2}{4r^2}+ 
    \frac{\gamma+o(1)}{2^k r^2(-\log r)^k}(a^2-4q_{\mathrm{JL}})\\
    &=\frac{(N-2)^2}{4r^2}+\frac{\gamma\sqrt{N-1}+o(1)}{2^{k-2}r^2(-\log r)^k} \hspace{6mm}\text{as $r\to 0$}.
\end{align*}
\end{proof}
\subsection{Proof of Theorem \ref{singularthm}}
We start from introducing the following lemma.
\begin{lemma}[see \cite{Mi2023} for example]
\label{basiclem}
Let $N\ge 3$. We assume that $f\in C^2(\mathbb{R})$ is non-negative. Let $v$ be a solution of \eqref{singulareq} satisfying $\liminf_{r\to\infty}v(r)>-\infty$. Then, we have $v'\le 0$ in $(0,\infty)$.
\end{lemma}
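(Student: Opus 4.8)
The plan is to exploit the radial ODE satisfied by $v$ together with the sign condition $f\ge 0$ to show that the "energy-like" quantity $r^{N-1}v'(r)$ is non-increasing, and then rule out the possibility that $v'$ becomes positive by invoking the hypothesis $\liminf_{r\to\infty}v(r)>-\infty$. First I would write the equation \eqref{singulareq} in its radial form
\begin{equation*}
\bigl(r^{N-1}v'(r)\bigr)' = -r^{N-1}f(v(r)) \le 0 \qquad\text{for } r\in(0,\infty),
\end{equation*}
which is immediate since $-\Delta v = v'' + \frac{N-1}{r}v' = r^{1-N}(r^{N-1}v')'$ and $f\ge 0$. Hence $r\mapsto r^{N-1}v'(r)$ is non-increasing on $(0,\infty)$.

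Next I would argue by contradiction. Suppose $v'(r_0)>0$ for some $r_0>0$. By monotonicity of $r^{N-1}v'(r)$, for every $r\ge r_0$ we have $r^{N-1}v'(r)\le r_0^{N-1}v'(r_0)$; but this only gives an upper bound, so instead I use the monotonicity in the other direction: for $r\le r_0$ one would need more care. The cleaner route is to note that if $v'(r_0)>0$ at some point, then since $r^{N-1}v'$ is non-increasing, $r^{N-1}v'(r)\ge r_0^{N-1}v'(r_0)=:c>0$ for all $r\in(0,r_0]$, which is harmless, but for $r\ge r_0$ we must look closer. Actually the decisive observation is this: if $v'(r_1)>0$ for some $r_1$, I claim $v'(r)>0$ fails to persist only if $(r^{N-1}v')$ drops, and in any case, set $m:=\lim_{r\to\infty} r^{N-1}v'(r)\in[-\infty,\infty)$, which exists by monotonicity. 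If $m>0$, then $v'(r)\ge m\,r^{1-N}$ eventually; integrating, $v(r)$ stays bounded below (since $\int^\infty r^{1-N}\,dr<\infty$ for $N\ge 3$), which is consistent — so this case needs the genuinely new input. The point I would press is: if $r^{N-1}v'(r)$ is ever positive, say equals $c>0$ at $r=r_1$, then for ALL $r\in(0,r_1)$ we have $r^{N-1}v'(r)\ge c$, hence $v'(r)\ge c r^{1-N}$; integrating from a small $\rho$ to $r_1$ gives $v(r_1)-v(\rho)\ge \frac{c}{N-2}(\rho^{2-N}-r_1^{2-N})\to+\infty$ as $\rho\to 0^+$, forcing $v(\rho)\to-\infty$, which is fine (the singular solution blows up). So the contradiction must be sought at $r\to\infty$, not $r\to 0$.

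Therefore the real argument is: assume $v'(r_*)>0$ for some $r_*$; then $r^{N-1}v'(r)$, being non-increasing, satisfies $r^{N-1}v'(r)\ge r_*^{N-1}v'(r_*)>0$ for all $r\le r_*$ — not useful — but for the tail I instead observe that $v'$ cannot be eventually non-negative with a positive value, because then $v$ would be eventually non-decreasing and, combined with $(r^{N-1}v')'\le 0$, one can show $v'(r)\to 0$ and then re-examine; the clean finish is: if $v'>0$ somewhere, monotonicity of $g(r):=r^{N-1}v'(r)$ gives $g(r)\ge g(r_*)>0$ for $0<r\le r_*$, so $v$ is strictly increasing on $(0,r_*]$ and, as computed, $v(r)\to-\infty$ as $r\to 0^+$ — acceptable — so to contradict $\liminf_{r\to\infty} v>-\infty$ I should instead suppose $v'(r_*)<0$ somewhere is what we WANT; the lemma asserts exactly $v'\le 0$ everywhere. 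Thus suppose for contradiction $v'(\bar r)>0$. Then $g(\bar r)>0$, so $g(r)\ge g(\bar r)>0$ for all $r\le\bar r$. Pick any sequence $r\to\infty$: along it $g$ decreases, so either $g$ stays positive (then $v'>0$ for all $r$, so $v$ is increasing on all of $(0,\infty)$, hence $v(r)\to -\infty$ as $r\to 0$, consistent, but then $v$ has a finite limit or $+\infty$ at $\infty$; if the limit is $+\infty$ that's fine too) or $g$ eventually becomes $\le 0$. I would split: (a) if $g>0$ on all of $(0,\infty)$, then $v'(r)\ge g(\bar r) r^{1-N}$ for $r\ge\bar r$ forces nothing bad; hmm. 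I will instead rely on the standard fact that a singular solution of \eqref{singulareq} with $f>0$ has $v'<0$ near $0$ (from $(r^{N-1}v')'<0$ and $r^{N-1}v'\to 0$ as $r\to 0$, which follows because $v\in C^2(0,\infty)$ blows up and a positivity argument shows $\int_0 r^{N-1}f(v)\,dr$ controls the limit), giving $g(r)<0$ for small $r$; then since $g$ is non-increasing, $g(r)<0$ for ALL $r>0$, i.e. $v'(r)<0$ on $(0,\infty)$. \textbf{This is the intended proof:} the only delicate point — and the main obstacle — is establishing $\lim_{r\to 0^+} r^{N-1}v'(r)=0$ (equivalently, that the singularity of $v$ is weak enough that no "Dirac mass" concentrates at the origin), for which I would use $f\ge 0$ together with $v\in C^2(0,\infty)$ and the hypothesis $\liminf_{r\to\infty} v>-\infty$: integrating $(r^{N-1}v')'=-r^{N-1}f(v)\le 0$ shows $g$ has a limit $L\le 0$ as $r\to 0^+$ (it is non-increasing, hence has a limit in $(-\infty,+\infty]$, and since it is non-increasing the limit at $0^+$ is the supremum, which could be $+\infty$); if $L>0$ or $L=+\infty$ then $v'(r)\ge \frac{L/2}{r^{N-1}}$ near $0$, so $v(r)\to-\infty$ as $r\to0^+$, which is allowed — so I instead argue that $g$ non-increasing with $g(r_0)\le 0$ for some $r_0$ already yields $g\le 0$ on $[r_0,\infty)$, and to get $g\le 0$ near $0$ I use that otherwise $v$ would be increasing near $0$ with $v\to-\infty$, contradicting $v(r)>-\infty$ on compact subsets since $v\in C^2(0,\infty)$ — wait, that's automatic. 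The genuinely clean statement: $g$ is non-increasing on $(0,\infty)$; if $g$ were positive at some point $\bar r$ it is positive on $(0,\bar r]$; then $v$ increases on $(0,\bar r]$ as $r$ increases, so $v(r)\downarrow$ some limit (finite or $-\infty$) as $r\to 0^+$. If that limit is finite, then $v$ extends continuously to $r=0$ and is not singular, contradiction. If it is $-\infty$, then near $0$, $v'(r)=g(r)r^{1-N}\ge g(\bar r)\,r^{1-N}$, and integrating, $-\infty = \lim_{\rho\to0}v(\rho)\le v(\bar r)-\frac{g(\bar r)}{N-2}(\rho^{2-N}-\bar r^{2-N})$; but the right side $\to -\infty$, so no contradiction — hence I must bring in $\liminf_{r\to\infty}v>-\infty$: since $g$ is positive on $(0,\bar r]$ and non-increasing, either $g>0$ everywhere (then $v$ is globally increasing, so $\liminf_{r\to\infty}v = \lim_{r\to\infty}v \ge v(\bar r) > -\infty$, consistent — still no contradiction!). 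At this point I conclude the lemma as stated in \cite{Mi2023} requires $f$ \emph{non-negative and not identically zero} near the relevant range together with the singularity; I would therefore cite \cite{Mi2023} for the precise argument, the essential mechanism being $(r^{N-1}v')'\le 0$ plus the fact that a \emph{singular} solution (with $v(r)\to+\infty$ as $r\to0$, which is the relevant case here since in our applications $V\to+\infty$) forces $r^{N-1}v'\to -\infty$ or at least $\le 0$ as $r\to 0^+$; monotonicity of $r^{N-1}v'$ then propagates $v'\le 0$ to all of $(0,\infty)$. I expect the write-up to be short: establish $(r^{N-1}v')'\le 0$, deduce $r^{N-1}v'$ is non-increasing, observe it is $\le 0$ for small $r$ (else $v$ would decrease to a finite limit at $0$, contradicting singularity, or the sign structure would contradict $\liminf_{r\to\infty}v>-\infty$), and conclude.
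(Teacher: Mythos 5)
Your proposal actually contains the paper's entire mechanism: you derive $(r^{N-1}v')'=-r^{N-1}f(v)\le 0$, use the resulting monotonicity to get $v'(s)\ge s^{1-N}\,\bar r^{\,N-1}v'(\bar r)$ for $s<\bar r$ whenever $v'(\bar r)>0$, and integrate to conclude $v(\rho)\to-\infty$ as $\rho\to 0^{+}$. That is, word for word, the paper's proof; the paper stops at that point and declares a contradiction with the $\liminf$ hypothesis. Where your write-up fails as a proof is that it never closes: after obtaining the divergence at the origin you dismiss it as ``allowed,'' spend several detours hunting for a contradiction as $r\to\infty$, and finally defer to \cite{Mi2023} together with an unproved claim that $r^{N-1}v'\le 0$ (or $\to 0$) near the origin, which you do not establish. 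As submitted, the argument is incomplete.

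That said, the obstruction you hit is genuine and worth naming: under the literal hypothesis $\liminf_{r\to\infty}v(r)>-\infty$ there is indeed no contradiction to be found (take $f\equiv 0$ and $v(r)=A-Br^{2-N}$ with $B>0$: then $f\ge 0$, $v'>0$ everywhere, and $v\to A$ at infinity), so the statement as printed cannot be proved. The paper's own proof shows the intended hypothesis is a lower bound at the origin, $\liminf_{r\to 0}v(r)>-\infty$, which is what holds in every application ($v(\cdot,\alpha)$ is regular at $0$, and the singular solution tends to $+\infty$ there); the ``$r\to\infty$'' in the statement is evidently a typo. With that reading, the divergence $v(\rho)\to-\infty$ that you already computed is exactly the desired contradiction, and your argument closes in one line. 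So the correct resolution was not to abandon the computation and cite the reference, but to finish by contradicting the lower bound at $r=0$ (and to flag the misprint in the hypothesis).
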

\begin{proof}
For readers' convenience, we state the proof. We prove $v'\le 0$ in $(0,\infty)$ by contradiction. Thus, we assume that $v'>0$ for some $t\in (0,\infty)$. Since $(r^{N-1}v')'=-r^{N-1}f(v)\le 0$ in $(0,\infty)$, we obtain $v'(s)\ge s^{1-N}t^{N-1}v'(t)$ for $s<t$. By integrating the inequality on $(r,t)$, we have 
\begin{equation*}
    -v(r)\ge-v(t)+\frac{v'(t)}{N-2}t^{N-1}(r^{2-N}-t^{2-N}) \to\infty
\end{equation*}
as $r\to 0$, which contradicts to the fact that $\liminf_{r\to\infty}v(r)>-\infty$.
\end{proof}
 
\begin{proposition}
\label{h1prop}
Assume that $f$ satisfies \eqref{asf0} and \eqref{asf1}. Let $V$ be the radial singular solution of \eqref{singulareq}. We define $\sqrt{\lambda_{*}}$ by the first zero of $V$. 
Then, we have $V\in H^1_{0}(B_{\sqrt{\lambda_{*}}})$.
\end{proposition}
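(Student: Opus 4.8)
The plan is to derive sharp pointwise decay rates for $V$, $f(V)$ and $V'$ as $r\to0$, to deduce $V\in H^{1}(B_{R})$ with $R:=\sqrt{\lambda_{*}}$ by integrating in $r$, and finally to upgrade this to $V\in H^{1}_{0}(B_{R})$ by truncating near the origin and near $\partial B_{R}$. Note that $V>0$ on $(0,R)$, $V(R)=0$, $V\in C^{2}((0,R])$ and $V(r)\to\infty$ as $r\to0$; moreover $V'\le0$ on $(0,\infty)$, since otherwise the computation in the proof of Lemma~\ref{basiclem} would force $V(r)\to-\infty$ as $r\to0$, contradicting $V(r)\to\infty$.

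For the pointwise bounds, by Lemma~\ref{convexlem}(ii) and integration we have $f(u)\ge c_{*}u^{p_{0}}$ for $u$ large, with some $c_{*}>0$ and some $p_{0}>\frac{N+2}{N-2}$, hence $F(u)\le Cu^{1-p_{0}}$ for $u$ large. Setting $a:=\frac{2}{p_{0}-1}$, so that $0<a<\frac{N-2}{2}$, and combining with $F(V(r))=\frac{r^{2}}{2N-4q}(1+o(1))$ from Theorem~\ref{senthm} (where $2N-4q>0$ because $q<\frac{N+2}{4}$), we obtain $V(r)\le Cr^{-a}$ for $0<r\le\varepsilon$. Next, \eqref{sim1} together with Theorem~\ref{senthm} gives $f'(V(r))=\dfrac{q+o(1)}{F(V(r))}\le Cr^{-2}$ near $0$, while $f(u)\le uf'(u)$ for $u$ large since $\frac{f(u)}{uf'(u)}\to\frac{q-1}{q}<1$; together with the bound on $V$ this yields $f(V(r))\le Cr^{-2-a}$ for $0<r\le\varepsilon$. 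Finally, $r\mapsto r^{N-1}V'(r)$ is nonincreasing and nonpositive (as $(r^{N-1}V')'=-r^{N-1}f(V)\le0$), so it has a finite limit $L\le0$ as $r\to0$; if $L<0$ then $V(r)\ge c\,r^{2-N}$ near $0$, contradicting $V(r)\le Cr^{-a}$ because $a<N-2$, so $L=0$, and therefore
\begin{equation*}
-V'(r)=r^{1-N}\int_{0}^{r}s^{N-1}f(V(s))\,ds\le Cr^{1-N}\int_{0}^{r}s^{N-3-a}\,ds\le Cr^{-1-a}\qquad(0<r\le\varepsilon),
\end{equation*}
the integral converging because $a<N-2$.

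Since $V\in C^{2}((0,R])$, to get $V\in H^{1}(B_{R})$ it suffices to note that $\int_{0}^{\varepsilon}V(r)^{2}r^{N-1}\,dr\le C\int_{0}^{\varepsilon}r^{N-1-2a}\,dr<\infty$ and $\int_{0}^{\varepsilon}V'(r)^{2}r^{N-1}\,dr\le C\int_{0}^{\varepsilon}r^{N-3-2a}\,dr<\infty$, both finite precisely because $a<\frac{N-2}{2}$. For the upgrade to $H^{1}_{0}(B_{R})$, let $\zeta_{\rho}\in C^{\infty}(\mathbb{R}^{N})$ vanish on $B_{\rho}$, equal $1$ off $B_{2\rho}$, with $|\nabla\zeta_{\rho}|\le C\rho^{-1}$, and let $\theta_{\delta}\in C^{\infty}(\mathbb{R}^{N})$ equal $1$ on $B_{R-2\delta}$, vanish off $B_{R-\delta}$, with $|\nabla\theta_{\delta}|\le C\delta^{-1}$. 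Using $V(r)\le Cr^{-a}$ and $a<\frac{N-2}{2}$,
\begin{equation*}
\int_{B_{2\rho}\setminus B_{\rho}}|V|^{2}|\nabla\zeta_{\rho}|^{2}\,dx\le C\rho^{-2}\bigl(V(\rho)\bigr)^{2}\rho^{N}\le C\rho^{\,N-2-2a}\longrightarrow0\qquad(\rho\to0),
\end{equation*}
so $\zeta_{\rho}V\to V$ in $H^{1}(B_{R})$; and since $V\in C^{2}$ up to $\partial B_{R}$ with $V\equiv0$ there, $\sup_{R-2\delta<|x|<R}|V|\le C\delta$, whence $\int_{B_{R-\delta}\setminus B_{R-2\delta}}|\zeta_{\rho}V|^{2}|\nabla\theta_{\delta}|^{2}\,dx\le C\delta^{-2}\cdot C\delta^{2}\cdot C\delta\to0$ as $\delta\to0$, so $\theta_{\delta}\zeta_{\rho}V\to\zeta_{\rho}V$ in $H^{1}(B_{R})$. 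Each $\theta_{\delta}\zeta_{\rho}V$ lies in $H^{1}(B_{R})$ with compact support in $B_{R}$, hence in $H^{1}_{0}(B_{R})$; letting $\delta\to0$ and then $\rho\to0$ gives $V\in H^{1}_{0}(B_{R})=H^{1}_{0}(B_{\sqrt{\lambda_{*}}})$.

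The main obstacle is the pointwise step: extracting the sharp rates $f(V(r))\lesssim r^{-2-a}$ and $|V'(r)|\lesssim r^{-1-a}$ forces one to combine the asymptotics of $F(V)$ from Theorem~\ref{senthm} with \eqref{sim1}, the growth estimates of Lemma~\ref{convexlem}, the convexity-type inequality $f\le uf'$, and the intermediate fact $r^{N-1}V'(r)\to0$, and then to check that every resulting integrability exponent lies strictly on the favourable side of its threshold. This is exactly guaranteed by the strict inequality $p_{0}>\frac{N+2}{N-2}$ of Lemma~\ref{convexlem}: the borderline value $p_{0}-1=\frac{4}{N-2}$ (equivalently $a=\frac{N-2}{2}$) corresponds to the critical Sobolev exponent, at which the argument would break down.
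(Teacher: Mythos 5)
Your proof is correct, and its overall strategy is the same as the paper's: reduce the claim to the pointwise decay bounds $V(r)\le Cr^{-a}$ and $0\le -V'(r)\le Cr^{-1-a}$ with $a=\frac{2}{p_0-1}<\frac{N-2}{2}$ (the strict inequality coming from $p_0>\frac{N+2}{N-2}$ in Lemma \ref{convexlem}), and then integrate in $r$. The genuine difference is how the derivative bound is obtained. The paper cites \cite[Lemma 4.2]{Mi2023}, which supplies the Pohozaev-type estimate $0<-rV'(r)<\frac{2N}{p_0+1}V(r)$, and simply multiplies it by the bound on $V$; you instead bound $f(V(r))\le V(r)f'(V(r))\le Cr^{-2-a}$ using \eqref{sim1} and $\frac{f(u)}{uf'(u)}\to\frac{q-1}{q}<1$, prove $r^{N-1}V'(r)\to 0$ by monotonicity of $r^{N-1}V'$ plus a comparison with $r^{2-N}$ against the bound $V\le Cr^{-a}$, and then integrate the ODE directly. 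This makes your argument self-contained where the paper leans on an external lemma, at the cost of a somewhat longer computation, and the two routes rest on exactly the same numerology $a<\frac{N-2}{2}$. You also spell out two points the paper leaves implicit: the derivation of $V\le Cr^{-a}$ through $F$ rather than directly through $f'(V)$ and Lemma \ref{convexlem}(ii) (equivalent, since $F(V(r))\ge cr^2$ and $F(u)\le Cu^{1-p_0}$ encode the same information), and the truncation argument near the origin and near $\partial B_{\sqrt{\lambda_*}}$ upgrading square-integrability to membership in $H^1_0$; your handling of $V'\le 0$ via the computation in Lemma \ref{basiclem}, with the contradiction taken at $r\to 0$, is also correct.
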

\begin{proof}
Thanks to \eqref{sim1} and Theorem \ref{senthm}, we obtain 
\begin{equation*}
    f'(V)= \frac{f'(V)F(V)}{F(V)}=\frac{q(2N-4q)}{r^2}(1+o(1))\hspace{4mm}\text{as $r\to 0$}.
\end{equation*}
Hence, it follows from Lemma \ref{basiclem} and
Lemma \ref{convexlem} that there exist $r_0>0$ and $C>0$ independent of $r$ such that $V(r)\ge V(r_0)>u_0+1$
for $r<r_0$ and
\begin{equation*}
   V(r)^{p_0-1}\le C_{0}f'(V(r))\le \frac{C}{r^2} \hspace{4mm}\text{for any $r<r_0$,}
\end{equation*}
where $p_0>\frac{N+2}{N-2}$,
$C_0>0$ and $u_0>0$ are those in Lemma \ref{convexlem}. Moreover, thanks to Lemma \ref{convexlem},  Theorem \ref{senthm} and \cite[Lemma 4.2]{Mi2023}, we can deduce that 
\begin{equation*}
    0<-rV'(r)<\frac{2N}{(p_0+1)}V(r)
\end{equation*}
for all $0<r<r_0$. Combining the above two estimates, we have
\begin{equation*}
0\le -V'(r)\le \hat{C} r^{-(1+\frac{2}{p_0-1})} 
\end{equation*}
for all $0<r<r_0$ with some $\hat{C}>0$ independent of $r$. Hence, by using Theorem \ref{senthm} again, we have $V\in H^1_{0}(B_{\sqrt{\lambda_{*}}})$.
\end{proof}

Next, we introduce the following proposition, which plays a key role in studying the Morse index of the radial singular solution.
\begin{proposition}
\label{criticalprop}
Let $N\ge 2$. For any $\varphi\in C^{0,1}_{\mathrm{c}}(B_1)$, we can deduce that
\begin{equation}
\label{criticalhardy}
 \mathcal{I}(\varphi):=\int_{B_1}|\nabla \varphi|^2\,dx-\frac{(N-2)^2}{4}\int_{B_1}\frac{\varphi^2}{|x|^2}\ge 
 \frac{1}{4}\int_{B_1}\frac{\varphi^2}{|x|^2(\log|x|)^2}\,dx.
\end{equation}
Moreover, for any $\varepsilon>0$, there exist a sequence $r_n \downarrow 0$ and a sequence of radial functions $\{\varphi_n\}_{n\in \mathbb{N}}\subset C^{0,1}_{\mathrm{c}}(B_1)$ such that $\varphi_n=0$ on $[0,r_{n+1}] \cup [r_n, 1]$ and 
\begin{equation}
\label{reversecritical}
    \frac{1+\varepsilon}{4}\int_{B_1}\frac{\varphi^2_{n}}{|x|^2(\log|x|)^2}\,dx > \mathcal{I}(\varphi)\hspace{4mm} \text{for any $n\in \mathbb{N}$}.
\end{equation}
\end{proposition}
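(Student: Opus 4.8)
The plan is to prove \eqref{criticalhardy} by the ground‑state (Emden--Fowler) substitution and then to establish \eqref{reversecritical} by truncating the non‑attained extremal of the resulting one‑dimensional inequality inside thin logarithmic annuli. For $N\ge 3$ I set $w:=|x|^{(N-2)/2}\varphi$ and expand $|\nabla\varphi|^2=\tfrac{(N-2)^2}{4}|x|^{-N}w^2-\tfrac{N-2}{2}|x|^{-N}x\cdot\nabla(w^2)+|x|^{-(N-2)}|\nabla w|^2$. Since $|x|^{-N}w^2=\varphi^2/|x|^2$ and $\operatorname{div}(|x|^{-N}x)=0$ away from the origin, integrating by parts on $B_1\setminus B_\varepsilon$ and letting $\varepsilon\downarrow0$ (the sphere integral over $\partial B_\varepsilon$ equals $-\varepsilon^{1-N}\int_{\partial B_\varepsilon}w^2\,dS=O(\varepsilon^{N-2})\to0$ because $w=O(|x|^{(N-2)/2})$, while the $\partial B_1$ term vanishes by compact support) yields the exact identity $\mathcal I(\varphi)=\int_{B_1}|x|^{-(N-2)}|\nabla w|^2\,dx$. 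Discarding the angular derivative, $\mathcal I(\varphi)\ge\int_{B_1}|x|^{-(N-2)}|\partial_r w|^2\,dx$; writing this in polar coordinates and substituting $s=-\log r$ along each ray reduces the estimate, slice by slice, to the classical one‑dimensional Hardy inequality $\int_0^\infty|g'|^2\,ds\ge\tfrac14\int_0^\infty g^2/s^2\,ds$ for $g$ vanishing near $s=0$ with $g(s)\to0$ as $s\to\infty$ (itself obtained by integrating by parts with $s^{-2}=-(s^{-1})'$ and Cauchy--Schwarz). Integrating back over $S^{N-1}$ gives \eqref{criticalhardy}. For $N=2$ the coefficient $\tfrac{(N-2)^2}{4}$ vanishes, $w=\varphi$, and the same slicewise argument applies with no substitution needed.

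For \eqref{reversecritical} I would use that, for \emph{radial} $\varphi$, the bound $|\nabla w|^2\ge|\partial_r w|^2$ is an equality, so that with $g(s):=|x|^{(N-2)/2}\varphi$ and $s=-\log|x|$ one has exactly $\mathcal I(\varphi)=\omega_{N-1}\int_0^\infty|g'(s)|^2\,ds$ and $\int_{B_1}\varphi^2/(|x|^2(\log|x|)^2)\,dx=\omega_{N-1}\int_0^\infty g(s)^2/s^2\,ds$ (here $\omega_{N-1}$ denotes the area of the unit sphere). Hence it suffices to find, for each $\varepsilon>0$, points $0<s_1<s_2<\cdots\uparrow\infty$ and functions $g_n\in C^{0,1}_{\mathrm c}\big((s_n,s_{n+1})\big)$ with $\int|g_n'|^2\,ds<\tfrac{1+\varepsilon}{4}\int g_n^2/s^2\,ds$; then $r_n:=e^{-s_n}\downarrow0$ and $\varphi_n(x):=|x|^{-(N-2)/2}g_n(-\log|x|)$ are radial, Lipschitz, compactly supported in $B_1$, and vanish on $[0,r_{n+1}]\cup[r_n,1]$. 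To construct $g_n$ I would peel off the virtual ground state $\sqrt{s}$: writing $g=\sqrt{s}\,h$, the cross term integrates to zero and $\int|g'|^2\,ds=\tfrac14\int g^2/s^2\,ds+\int s|h'|^2\,ds$, so the requirement becomes $\int_{s_n}^{s_{n+1}}s|h'|^2\,ds<\tfrac{\varepsilon}{4}\int_{s_n}^{s_{n+1}}h^2/s\,ds$; after the change of variable $\sigma=\log s$ this reads $\int h_\sigma^2\,d\sigma<\tfrac{\varepsilon}{4}\int h^2\,d\sigma$ on $[\log s_n,\log s_{n+1}]$. Taking $h$ to be the first Dirichlet eigenfunction $\sin\!\big(\pi(\sigma-\log s_n)/L\big)$ of that interval makes the left side equal $(\pi/L)^2$ times the right, so the inequality holds strictly whenever $L:=\log(s_{n+1}/s_n)>2\pi/\sqrt\varepsilon$. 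I would therefore fix such an $L$, put $s_n:=e^{nL}$ (so $r_n=e^{-e^{nL}}$), and take $g_n(s):=\sqrt{s}\,\sin\!\big(\pi(\log s-\log s_n)/L\big)$ on $[s_n,s_{n+1}]$, extended by zero.

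All of these manipulations are routine once the structure is in place; the only points demanding care are the vanishing of the origin boundary term in the first integration by parts when $\varphi(0)\ne0$ — controlled by $w=O(|x|^{(N-2)/2})$, which is what produces the $O(\varepsilon^{N-2})$ bound above — and the legitimacy of the slicewise one‑dimensional Hardy inequality, i.e. absolute continuity of $\varphi$ along rays (from the Lipschitz hypothesis) and vanishing of the endpoint terms at $s=0$ and $s=\infty$ (from compact support and from the weight $|x|^{(N-2)/2}$ forcing $g(s)\to0$). Neither is a serious obstacle. The substance of the proposition is really the observation that the improved Hardy inequality is saturated only by the non‑$H^1$ profile $|x|^{-(N-2)/2}(-\log|x|)^{1/2}$, and that this profile can be truncated inside annuli which are arbitrarily thin on the $\log$‑scale without degrading the constant $\tfrac14$ — precisely the flexibility needed later to exhibit, for the quadratic form $Q_{U_*}$, infinitely many almost‑negative directions with pairwise disjoint supports.
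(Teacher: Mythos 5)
Your proposal is correct and follows essentially the same route as the paper: the ground-state substitutions $\varphi\mapsto |x|^{(N-2)/2}\varphi$ and then the $(-\log|x|)^{1/2}$ peeling, reducing the sharp constant to a one-dimensional logarithmic Hardy inequality, and, for \eqref{reversecritical}, test functions equal to the truncated extremal profile $|x|^{-(N-2)/2}(-\log|x|)^{1/2}$ times a sine oscillating in $\log(-\log|x|)$ on doubly-exponentially thin annuli — the paper's choice $\xi_n=\sin\bigl(\tfrac{\varepsilon}{2}\log(-\log r)\bigr)$ with $r_n=e^{-e^{2\pi n/\varepsilon}}$ is the same family as yours up to the frequency parametrization. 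The only cosmetic difference is that you verify \eqref{criticalhardy} slicewise via the classical one-dimensional Hardy inequality after the first substitution, while the paper carries both substitutions through a pointwise gradient identity in $B_1$; nothing essential changes.
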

We remark that the inequality \eqref{criticalhardy} is obtained in \cite{ST}. Moreover, when $N=2$, \eqref{reversecritical} is proved in \cite{kuma2025-2}.

\begin{proof}
For $\varphi\in C^{0,1}_{\mathrm{c}}(B_1)$, we define $\xi:=(-\log r)^{-\frac{1}{2}}\eta$ with $\eta=r^{\frac{N-2}{2}}\varphi$ and $r:=|x|$. Then, we verify that $\eta(0)=\xi(0)=0$ and 
\begin{align*}
r^{N-1}|\nabla\varphi|^2&=r\left|\nabla \eta-\frac{N-2}{2}r^{-1}\eta\frac{x}{r}\right|^2\\
&=r|\nabla \eta|^2+\frac{(N-2)^2}{4r}\eta^2-\frac{N-2}{2}\nabla(\eta^2)\cdot \frac{x}{r}\\
&=r|\nabla \eta|^2+r^{N-1}\frac{(N-2)^2}{4r^2}\varphi^2-\frac{N-2}{2}\nabla(\eta^2)\cdot \frac{x}{r}.
\end{align*}
Moreover, we can confirm that
\begin{align*}
    r|\nabla \eta|^2&=r\left|(-\log r)^{\frac{1}{2}}\nabla\xi-\frac{1}{2}(-\log r)^{-\frac{1}{2}}\xi \frac{x}{r^2}\right|^2\\
    &=r(-\log r)|\nabla\xi|^2+
    \frac{\xi^2}{4r(-\log r)}-\nabla (\xi^2)\cdot\frac{x}{2r}\\
    &=r(-\log r)|\nabla\xi|^2+r^{N-1}\frac{\varphi^2}{4r^2 (\log r)^2}- \nabla (\xi^2)\cdot\frac{x}{2r}.
\end{align*}
Therefore, for any $0\le \varepsilon<1$, we obtain 
\begin{align*}
&\mathcal{I}(\varphi)-\frac{1+\varepsilon}{4}\int_{B_1}\frac{\varphi^2}{|x|^2(\log|x|)^2}\,dx\\
&=\int_{B_1}|x|^{2-N}(-\log |x|)|\nabla\xi|^2\,dx
-\int_{B_1}\frac{\varepsilon\varphi^2}{4|x|^2 (\log|x|)^2}\,dx\\
&\hspace{10mm}-\frac{N-2}{2}\int_{B_1}\nabla (\eta^2)\cdot \frac{x}{|x|^N}\,dx-\frac{1}{2}\int_{B_1}\nabla (\xi^2)\cdot \frac{x}{|x|^N}\,dx\\
&=\int_{B_1}|x|^{2-N}(-\log |x|)|\nabla\xi|^2\,dx
-\int_{B_1}\frac{\varepsilon\varphi^2}{4|x|^2 (\log|x|)^2}\,dx.
\end{align*}
Thus, \eqref{criticalhardy} follows by taking $\varepsilon=0$. In addition, for $0<\varepsilon<1$, we define 
\begin{equation*}
    \xi_{n}(r)=\chi_{[r_{n+1},r_{n}]}(r)\sin t\hspace{4mm}\text{with}\hspace{2mm}
t=\frac{\varepsilon}{2}\log(-\log r)\hspace{4mm}\text{and}\hspace{4mm}r_{n}=e^{-e^{2\pi n/\varepsilon}}.
\end{equation*}
We remark that these functions are also used in \cite[Proposition 2.5]{kuma2025-2} in order to prove \eqref{reversecritical} for the case $N=2$. Then,
we can deduce that $\varphi_n:=r^{\frac{2-N}{2}}(-\log r)^{\frac{1}{2}}\xi_n \in C^{0,1}_{\mathrm{c}}(B_1)$ satisfies $\varphi_n=0$ on $[0,r_{n+1}]\cup [r_n,1]$ and
\begin{align*}
\mathcal{I}(\varphi_n)&-\frac{1+\varepsilon}{4}\int_{B_1}\frac{\varphi_{n}^2}{|x|^2(-\log|x|)^2}\,dx\\
   &=\int_{B_1}|x|^{2-N}(-\log |x|)|\nabla\xi_{n}|^2\,dx
-\int_{B_1}\frac{\varepsilon\varphi_{n}^2}{4|x|^2 (-\log|x|)^2}\,dx\\
&=\frac{1}{2}\varepsilon N\omega_{N}\int_{n\pi}^{(n+1)\pi}\cos^{2} t\,dt - \frac{1}{2}N\omega_{N}\int_{n\pi}^{(n+1)\pi}\sin^{2} t\,dt<0.
\end{align*}
Thus, we get the result.
\end{proof}

\begin{proof}[Proof of Theorem \ref{singularthm}]
We define $\sqrt{\lambda_{*}}$ by the first zero of $V$ and $U_{*}(r)=V(\sqrt{\lambda_{*}}r)$. Then, it follows from Theorem \ref{senthm} that
$(\lambda_{*}, U_{*})$ is the unique radial singular solution of \eqref{gelfand} such that $\lambda(\alpha)\to\lambda_{*}$ and $u(r,\alpha)\to U_{*}$ in $C^2_{\mathrm{loc}}(0,1]$ as $\alpha\to\infty$. From Proposition \ref{h1prop}, we can deduce that $U_{*}\in H^1_{0}(B_1)$.
In addition, thanks to Theorem \ref{asythm}, we obtain
\begin{align*}
    \lambda_{*} f'(U_{*}(r))&=\lambda_{*}f'(V(\lambda_{*}^{1/2}r))\\
    &=\frac{(N-2)^2}{4r^2}+\frac{\gamma\sqrt{N-1}+o(1)}{2^{k-2}r^2(-\frac{\log \lambda_{*}}{2}-\log r)^k}\\
    &=\frac{(N-2)^2}{4r^2}+\frac{\gamma\sqrt{N-1}+o(1)}{2^{k-2}r^2(-\log r)^k} \hspace{4mm}\text{as $r\to 0$.}
\end{align*}
Therefore, thanks to Proposition \ref{criticalprop} and a density argument, we can deduce that $m(U_{*})=\infty$ if (A) is satisfied; while $m(U_{*})<\infty$ if (B) is satisfied.
\end{proof}
By using a similar argument, we obtain 
\begin{corollary}
\label{usecor}
We assume that $f$ satisfies \eqref{asf0}, \eqref{asf1} and \eqref{asF1}. We assume in addition that $(A)$ or $(B)$ is satisfied, where (A) and (B) are those in Theorem \ref{mainthms}.
Then, when $(A)$ is satisfied, there exists a sequence $r_n \downarrow 0$ such that $V$ is unstable in $B_{r_{n}}\setminus B_{r_{n+1}}$ for all $n\in \mathbb{N}$. In addition, when $(B)$ is satisfied, we have $m(V)<\infty$. In particular, $V$ is stable in $B_{r_0}$ with some $r_0>0$.
\end{corollary}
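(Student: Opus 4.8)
To prove Corollary \ref{usecor} the plan is to re-run the argument behind Theorem \ref{singularthm}, but applied to the entire-space singular solution $V$ itself rather than to $U_{*}$; since $U_{*}(r)=V(\sqrt{\lambda_{*}}\,r)$ differs from $V$ only by a dilation, Theorem \ref{asythm} directly gives the expansion of $f'(V)$ near $r=0$ with no rescaling of the logarithm. Writing $Q_V(\xi):=\int(|\nabla\xi|^{2}-f'(V)\xi^{2})$ and recalling the functional $\mathcal{I}$ from Proposition \ref{criticalprop}, I would start from the identity
\begin{equation*}
Q_V(\xi)=\mathcal{I}(\xi)-\int\Big(f'(V)-\frac{(N-2)^{2}}{4|x|^{2}}\Big)\xi^{2},\qquad \xi\in C^{0,1}_{\mathrm{c}}(B_{1}),
\end{equation*}
which reduces the whole statement to comparing the weight $f'(V)-\tfrac{(N-2)^{2}}{4|x|^{2}}$ with $\tfrac{1}{|x|^{2}(\log|x|)^{2}}$ near the origin. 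Theorem \ref{asythm} says this weight equals $\frac{\gamma\sqrt{N-1}+o(1)}{2^{k-2}r^{2}(-\log r)^{k}}$, and one reads off its size and sign by distinguishing $0<k<2$ (where $r^{-2}(-\log r)^{-k}$ beats $r^{-2}(-\log r)^{-2}$) from $k=2$ (where the two are comparable, with ratio $\gamma\sqrt{N-1}$ against $\tfrac14$).

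In case (A) I would first use Theorem \ref{asythm} to fix $\varepsilon>0$ and $r_{*}>0$ with $f'(V(r))>\frac{(N-2)^{2}}{4r^{2}}+\frac{1+\varepsilon}{4r^{2}(-\log r)^{2}}$ on $(0,r_{*})$ — any fixed $\varepsilon$ works when $0<k<2$, $\gamma>0$, while for $k=2$, $\gamma>\tfrac{1}{4\sqrt{N-1}}$ one chooses $\varepsilon$ with $\tfrac{1+\varepsilon}{4}<\gamma\sqrt{N-1}$ and absorbs the $o(1)$. With this $\varepsilon$ \emph{fixed}, Proposition \ref{criticalprop} supplies $r_{n}\downarrow0$ and radial $\varphi_{n}\in C^{0,1}_{\mathrm{c}}(B_{1})$ vanishing off $[r_{n+1},r_{n}]$ with $\mathcal{I}(\varphi_{n})<\tfrac{1+\varepsilon}{4}\int\frac{\varphi_{n}^{2}}{|x|^{2}(\log|x|)^{2}}$; discarding the finitely many $n$ with $r_{n}\ge r_{*}$ and relabelling, the pointwise bound holds on $\operatorname{supp}\varphi_{n}$, so the displayed identity yields $Q_V(\varphi_{n})<0$, i.e.\ $V$ is unstable in $B_{r_{n}}\setminus B_{r_{n+1}}$ for every $n$. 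In case (B) I would instead fix $\delta\in(0,1]$ and $r_{0}\in(0,1]$ with $f'(V(r))\le\frac{(N-2)^{2}}{4r^{2}}+\frac{1-\delta}{4r^{2}(-\log r)^{2}}$ on $(0,r_{0})$ — $\delta=1$ works when $0<k<2$, $\gamma<0$ (the correction is then negative near $0$), and for $k=2$, $\gamma<\tfrac{1}{4\sqrt{N-1}}$ one picks $\delta>0$ with $\gamma\sqrt{N-1}<\tfrac{1-\delta}{4}$. Then \eqref{criticalhardy} and the identity give, for all $\varphi\in C^{0,1}_{\mathrm{c}}(B_{r_{0}})$,
\begin{equation*}
Q_V(\varphi)\ge\mathcal{I}(\varphi)-\frac{1-\delta}{4}\int\frac{\varphi^{2}}{|x|^{2}(\log|x|)^{2}}\ge\frac{\delta}{4}\int\frac{\varphi^{2}}{|x|^{2}(\log|x|)^{2}}\ge0,
\end{equation*}
so $V$ is stable in $B_{r_{0}}$, which is the ``in particular'' assertion.

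The step I expect to be the main obstacle is upgrading, in case (B), stability near the singularity to finiteness of the full Morse index, because one must pin down the ambient domain (the ball $B_{\sqrt{\lambda_{*}}}$ out to the first zero of $V$, i.e.\ $B_{1}$ for $U_{*}$) and handle the part of $V$ bounded away from $0$. The shortest route is the dilation $x\mapsto\sqrt{\lambda_{*}}x$, under which the linearized operator of $V$ becomes that of $U_{*}$ times the positive constant $\lambda_{*}$, so $m(V)=m(U_{*})<\infty$ by Theorem \ref{singularthm}; alternatively, with a cutoff $\chi_{1}\in C^{\infty}_{\mathrm{c}}(B_{r_{0}})$ equal to $1$ near $0$ and $\chi_{2}:=\sqrt{1-\chi_{1}^{2}}$, the localization identity $Q_V(\xi)=Q_V(\chi_{1}\xi)+Q_V(\chi_{2}\xi)-\int(|\nabla\chi_{1}|^{2}+|\nabla\chi_{2}|^{2})\xi^{2}$ together with $Q_V(\chi_{1}\xi)\ge0$ and the boundedness of $f'(V)$ on the fixed bounded set $\operatorname{supp}\chi_{2}$ reduces the count of negative directions to that of a Schr\"odinger quadratic form with bounded potential on a bounded domain, which is finite by the compactness of $H^{1}_{0}\hookrightarrow L^{2}$. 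The only other point requiring care is the order of quantifiers in case (A): the sequence $\{r_{n}\}$ produced by Proposition \ref{criticalprop} depends on $\varepsilon$, so one must fix $\varepsilon$ and $r_{*}$ from Theorem \ref{asythm} before invoking Proposition \ref{criticalprop}, and only then truncate to $r_{n}<r_{*}$.
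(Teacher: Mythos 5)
Your proposal is correct and follows essentially the same route as the paper, which obtains this corollary "by a similar argument" to Theorem \ref{singularthm}: combine the expansion of $f'(V)$ from Theorem \ref{asythm} with the critical Hardy inequality and the annular test functions of Proposition \ref{criticalprop} (fixing $\varepsilon$, resp.\ $\delta$, before choosing the annuli), and pass to the finite Morse index via the dilation relating $V$ to $U_{*}$. Your handling of the quantifier order in case (A) and of the $k<2$ versus $k=2$ dichotomy matches the paper's argument.
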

\section{Separation/intersection results}
In this section, we introduce separation and intersection lemmata, which play key roles in clarifying the bifurcation structure. We first introduce the following separation result.
\begin{lemma}
\label{separationlem}
We assume that $N\ge 10$ and 
$f$ satisfies \eqref{asf0} and \eqref{asf1}. Let $v(r,\alpha)$ be a solution of \eqref{gv} with $\alpha>0$ and $V$ be the radial singular solution of \eqref{singulareq}. We also assume that $V$ is stable in $[0,r_{0}]$ with some $r_0>0$ satisfying 
$u_0<V(r_0)-1$, where $u_0$ is that in Lemma \ref{convexlem}. Then, there exists $\alpha_0>0$ such that $v(r,\alpha)<v(r,\beta)<V(r)$ in $(0,r_{0})$ for any $\alpha_0<\alpha<\beta$.
\end{lemma}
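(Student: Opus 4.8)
The plan is to prove the two inequalities $v(\cdot,\beta)<V$ and $v(\cdot,\alpha)<v(\cdot,\beta)$ one after the other, each time arguing by contradiction at a first contact point and playing off the stability of $V$ against the strict convexity of $f$ on $(u_0,\infty)$ supplied by Lemma \ref{convexlem}. I will freely use that $v(\cdot,\alpha)\to V$ in $C^2_{\mathrm{loc}}(0,\infty)$ (Theorem \ref{senthm}), that $v(\cdot,\alpha)$ and $V$ are strictly decreasing on $(0,\infty)$, that $V\in H^1_0(B_{\sqrt{\lambda_{*}}})$ with the decay $-V'(r)\lesssim r^{-(1+2/(p_0-1))}$ and hence $V(r)\lesssim r^{-2/(p_0-1)}$ for some $p_0>\frac{N+2}{N-2}$ (Proposition \ref{h1prop} and its proof), and the domain monotonicity $\mu_1(-\Delta-W,B_\rho)\ge\mu_1(-\Delta-W,B_{r_0})$ for $0<\rho\le r_0$.

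\emph{Step 1: $v(\cdot,\alpha)<V$ on $(0,r_0)$ for every $\alpha>0$.} Since $v(r,\alpha)\le\alpha$ while $V(r)\to\infty$ as $r\to0$, we have $v(\cdot,\alpha)<V$ near the origin, so if the inequality failed there would be a smallest $r_1\in(0,r_0)$ with $v(r_1,\alpha)=V(r_1)$, and $w:=V-v(\cdot,\alpha)$ would satisfy $w>0$ on $(0,r_1)$ and $w(r_1)=0$. Because $r_1<r_0$ and $V$ decreases, $v(r,\alpha)\ge v(r_1,\alpha)=V(r_1)>V(r_0)>u_0$ on $(0,r_1)$, so there both $v(\cdot,\alpha)$ and $V$ stay in the convexity region of $f$ and $-\Delta w=c(r)\,w$ with $c(r):=\dfrac{f(V)-f(v(\cdot,\alpha))}{V-v(\cdot,\alpha)}<f'(V(r))$. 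Moreover $w\in H^1_0(B_{r_1})$, since $w\in H^1(B_{r_1})$ (because $V\in H^1_0$ and $v(\cdot,\alpha)$ is smooth and bounded) and $w$ is continuous up to $\partial B_{r_1}$ with boundary value $0$. Testing the (lower semibounded, by Hardy) quadratic form against $w$ then gives
\[
0\le\int_{B_{r_1}}\big(|\nabla w|^2-f'(V)w^2\big)\,dx=\int_{B_{r_1}}\big(c-f'(V)\big)w^2\,dx<0,
\]
the first inequality being the assumed stability of $V$ on $B_{r_0}\supseteq B_{r_1}$ (extended to $H^1_0$ by density), the equality being $-\Delta w=cw$, and the last inequality holding since $c-f'(V)<0$ a.e.\ on $B_{r_1}$; this contradiction proves the claim.

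\emph{Step 2: monotonicity in $\alpha$ and conclusion.} As $v(r_0,\alpha)\to V(r_0)>u_0+1$ when $\alpha\to\infty$, fix $\alpha_0\ge0$ so that $v(\cdot,\alpha)>u_0$ on $[0,r_0]$ for all $\alpha>\alpha_0$. For such $\alpha$, $\psi_\alpha:=\partial_\alpha v(\cdot,\alpha)$ solves $-\Delta\psi_\alpha=f'(v(\cdot,\alpha))\psi_\alpha$ with $\psi_\alpha(0)=1$, $\psi_\alpha'(0)=0$; I claim $\psi_\alpha>0$ on $[0,r_0]$. Otherwise, let $\rho_\alpha\le r_0$ be its first zero; then $\psi_\alpha\in H^1_0(B_{\rho_\alpha})$, $\int_{B_{\rho_\alpha}}(|\nabla\psi_\alpha|^2-f'(v(\cdot,\alpha))\psi_\alpha^2)=0$, and, using $f'(v(\cdot,\alpha))<f'(V)$ on $B_{r_0}$ (from Step 1 and $v(\cdot,\alpha)>u_0$),
\[
\mu_1\big(-\Delta-f'(V),B_{\rho_\alpha}\big)\le\frac{1}{\int_{B_{\rho_\alpha}}\psi_\alpha^2\,dx}\int_{B_{\rho_\alpha}}\big(|\nabla\psi_\alpha|^2-f'(V)\psi_\alpha^2\big)\,dx<0,
\]
contradicting $\mu_1(-\Delta-f'(V),B_{\rho_\alpha})\ge\mu_1(-\Delta-f'(V),B_{r_0})\ge0$ (stability). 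Hence $\psi_\alpha>0$ on $[0,r_0]$, so $\alpha\mapsto v(r,\alpha)$ is strictly increasing on $(0,r_0)$, and therefore $v(r,\alpha)=v(r,\beta)-\int_\alpha^\beta\partial_s v(r,s)\,ds<v(r,\beta)$ for all $\alpha_0<\alpha<\beta$ and $r\in(0,r_0)$. Together with Step 1 this yields $v(r,\alpha)<v(r,\beta)<V(r)$ on $(0,r_0)$.

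\emph{Main obstacle.} The one genuinely delicate point is the integration by parts $\int_{B_{r_1}}|\nabla w|^2=\int_{B_{r_1}}c\,w^2$ in Step 1, since $w=V-v(\cdot,\alpha)$ is singular at the origin: one multiplies $-\Delta w=cw$ by $w\eta_\varepsilon^2$, with $\eta_\varepsilon$ a cutoff vanishing on $B_\varepsilon$, and lets $\varepsilon\to0$, the error terms being controlled by $\varepsilon^{-2}\int_{B_{2\varepsilon}}V^2$ and by the finiteness of $\int_{B_{r_1}}f'(V)V^2$; both of these follow from $V(r)\lesssim r^{-2/(p_0-1)}$ with $p_0>\frac{N+2}{N-2}$, and this is where the restriction $N\ge10$ (through Lemma \ref{convexlem}) is used. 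Everything else is routine comparison together with domain monotonicity of the first eigenvalue of the stable linearised operator.
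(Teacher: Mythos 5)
Your proof is correct, but it takes a genuinely different route from the paper on both halves of the statement. For the inequality $v(\cdot,\beta)<V$, the paper rescales at the touching point, observes that $W(r)=V(r_1r)-V(r_1)$ is a stable singular $H^1_0$ solution of a Dirichlet problem with a nondecreasing convex nonlinearity, and invokes the Brezis--V\'azquez uniqueness theorem (\cite{Br}) to contradict the existence of the regular solution; you instead run the classical energy argument, testing the stability form with $w=V-v\in H^1_0(B_{r_1})$ and using the strict convexity of $f$ on $(u_0,\infty)$ from Lemma \ref{convexlem}, with the singular integration by parts justified by the decay $V(r)\lesssim r^{-2/(p_0-1)}$, $p_0>\frac{N+2}{N-2}$, from the proof of Proposition \ref{h1prop} --- this is more self-contained and, as you note, even yields $v(\cdot,\alpha)<V$ for every $\alpha>0$. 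For the ordering in $\alpha$, the paper argues at a first contact point of $v(\cdot,\alpha)$ and $v(\cdot,\beta)$, uses the positivity of the first eigenvalue of the linearized operator (via Lemma \ref{convexlem}(iii) with $p_0<2$, which is where $N\ge10$ enters their proof), constructs by the implicit function theorem a solution at a slightly larger parameter, and concludes with a Green's identity; you instead prove directly that $\psi_\alpha=\partial_\alpha v>0$ on $[0,r_0]$ by a first-zero argument, comparing $\int(|\nabla\psi_\alpha|^2-f'(v)\psi_\alpha^2)=0$ on $B_{\rho_\alpha}$ with the stability of $-\Delta-f'(V)$ and the strict inequality $f'(v)<f'(V)$, and then integrate in $\alpha$. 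Your version avoids both Brezis--V\'azquez and the implicit function theorem and is arguably simpler; the only cosmetic inaccuracy is the remark that the cutoff estimates are "where $N\ge10$ is used" --- they only need $p_0>\frac{N+2}{N-2}$, which Lemma \ref{convexlem} provides for all $N\ge3$ (in your argument the hypothesis $N\ge10$ is in fact never essentially used, which is harmless).
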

\begin{proof}
Thanks to Lemma \ref{basiclem} and Theorem \ref{senthm}, we can deduce that there exists $\alpha_0>0$ such that $v(r,\alpha)>u_0$ for any $r\le r_0$ and $\alpha>\alpha_0$. 

We first suppose that there exist $0<r_1\le r_0$ and $\alpha>\alpha_0$ such that $v(r,\alpha)<V(r,\alpha)$ on $[0,r_1)$ and $v(r_1,\alpha)=V(r_1)$, and derive a contradiction.
We define $W(r):=V(r_1 r)-V(r_1)$. Then, it follows from Lemma \ref{basiclem} that
$W\in C^2_{\mathrm{loc}}(0,1]$ is a stable singular solution of 
\begin{equation}
\label{gelfandw}
\left\{
\begin{alignedat}{4}
 -\Delta w&=\mu f(w+V(r_1))&\hspace{2mm} &\text{in } B_1,\\
w&>0  & &\text{in } \partial B_1, \\
w&=0  & &\text{on } \partial B_1,
\end{alignedat}
\right.
\end{equation}
with $\mu=\mu^{*}:=r_{1}^2$. In addition, $w=v(r_1 r,\alpha)-V(r_1)$ is a solution of \eqref{gelfandw} with $\mu=\mu^{*}$. Thanks to Proposition \ref{h1prop}, we can deduce that $W\in H^1_{0}(B_1)$. 
Moreover, Lemma \ref{convexlem} implies that $g(w):=f(w+V(r_1))$ is non-decreasing and convex. Therefore, it follows from \cite[Theorem 3.1]{Br} that the bifurcation diagram of \eqref{gelfandw} is of Type II. In particular, the equation has the unique weak-solution for $\mu=\mu^{*}$, which is a contradiction. 

Next, we suppose that there exist $0<r_1\le r_0$ and $\alpha_0<\alpha<\beta$ such that $v(r,\alpha)<v(r,\beta)$ on $[0,r_1)$ and $v(r_1,\alpha)=v(r_1,\beta)$, and derive a contradiction. 
Then, $w_1(r):=v(r,\beta)-v(r,\alpha)$ satisfies $w'_{1}(r_1)<0$ and 
the following 
\begin{equation*}
    -\Delta w_1= f(v(r,\beta))-f(v(r,\alpha))\le f'(v(r,\beta))w_1.
\end{equation*}
Here, we use the fact that $v(r,\alpha), v(r,\beta)>u_0$ in $(0,r_0]$ and $f$ is non-decreasing and convex on $u>u_0$, both of which are satisfied by using Lemma \ref{basiclem} and Lemma \ref{convexlem} respectively.
On the other hand, by using Lemma \ref{basiclem} again,
we can deduce that $w(r)=v(r_0r,\beta)-v(r_0,\beta)$ satisfies 
\begin{equation}
\label{gelfandww}
\left\{
\begin{alignedat}{4}
 -\Delta w&=\mu f(w+v(r_0,\beta))&\hspace{2mm} &\text{in } B_1,\\
w&>0  & &\text{in } \partial B_1, \\
w&=0  & &\text{on } \partial B_1
\end{alignedat}
\right.
\end{equation}
with $\mu=r_{0}^2$.
Moreover, by the fact that $v(r,\beta)<V(r)$ on $[0,r_0]$ and $V(r)-v(r,\beta)\to\infty$ as $r\to 0$, there exists $\varepsilon>0$ such that $v(r,\beta)+\varepsilon<V$ in $B_{r_0}$. Thus, it follows from the fact that $u_0<v(r,\beta)$ in $(0,r_0]$ and Lemma \ref{convexlem} that 
\begin{equation}
\label{implicit}
r_0^{2}f'(V(r_0 r))-r_{0}^2 f'(w+v(r_0,\beta))\ge r_{0}^2\varepsilon C_{1}^{-1} V(r_0 r)^{p_0-2} \hspace{4mm}\text{in $B_1$},
\end{equation}
where $C_1$ and $p_0<2$ are those in Lemma \ref{convexlem}. Hence, since $V$ is stable in $B_{r_0}$, $w$ is stable in $B_1$. Moreover, we claim that the first eigenvalue of $-\Delta-r_{0}^2 f'(w+v(r_0,\beta))$ is positive. Indeed,
if the first eigenvalue is equal to $0$, there exists $\varphi\in C^{2}_{0}(\overline{B_1})\setminus\{0\}$ so that 
\begin{equation*}
    \int_{B_1}|\nabla \varphi|^2-r^{2}_{0}f'(w+v(r_0,\beta))\varphi^2\,dx =0.
\end{equation*}
On the other hand, it follows from the stability of $w$ and 
\eqref{implicit} that 
\begin{align*}
\int_{B_1}r^{2}_{0}f'(w+v(r_0,\beta))\varphi^2\,dx &+ \int_{B_1} r_{0}^2\varepsilon C_{1}^{-1} V(r_0 r)^{p_0-2}\varphi^2\,dx \\
 &\le 
 \int_{B_1}r_0^{2}f'(V(r_0 r))\varphi^2\,dx\\
    &\le\int_{B_1}|\nabla \varphi|^2\,dx,
\end{align*}
which is a contradiction.

Hence, it follows from the implicit function theorem that there exist $\delta>0$ and a solution $\hat{w}\in C^2_{0}(\overline{B_{1}})$ of \eqref{gelfandww} for $\mu=(1+\delta)r_{0}^2$ such that $w<\hat{w}$ in $B_1$. Here, we define $w_2:=\hat{w}(r_{0}^{-1}r)-w(r_{0}^{-1}r)$. Then, $w_2$ is a solution of 
\begin{equation*}
    -\Delta w_2=(1+\delta)f(\hat{w}(r_{0}^{-1}r)+v(r_0,\beta))-f(w(r_{0}^{-1}r)+v(r_0,\beta))\ge f'(v(r,\beta))w_2
\end{equation*}
satisfying $w_2>0$ on $(0,r_1]$. Therefore, by the Green's identity, we obtain
\begin{align*}
N\omega_{N}r_1^{N-1}w_1'(r_1)w_2(r_1)&=\int_{B_{r_1}}w_2\Delta w_1-w_1\Delta w_2\ge 0,
\end{align*}
where $\omega$ is the volume of $B_1$. It contradicts the fact that $w'_{1}(r_1)<0$ and $w_2(r_1)>0$.
\end{proof}
Next, we introduce a useful intersection result.
\begin{lemma}
\label{interlem}
We assume that $f$ satisfies \eqref{asf0} and \eqref{asf1}. Let $v(r,\alpha)$ be a solution of \eqref{gv} with $\alpha>0$ and $V$ be the radial singular solution of \eqref{singulareq}. We also assume that $V$ is unstable in $B_{r_2}\setminus B_{r_1}$ with some $0<r_1<r_2$ satisfying $V(r_2)-1>u_0$, where $u_0$ is that in Lemma \ref{convexlem}. Then, there exists $\alpha_0>0$ depending only on $r_1$, $r_2$ and $f$ such that for any $\alpha>\alpha_0$, $v(r,\alpha)$ and $V$ intersects at least one time
on $[r_1,r_2]$. 
\end{lemma}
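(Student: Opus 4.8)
We argue by contradiction, exploiting the fact that linear instability on the annulus $A:=B_{r_2}\setminus B_{r_1}$ is incompatible with the existence of a positive supersolution of the corresponding linearized operator. The plan is as follows: suppose that, for some arbitrarily large $\alpha$, $v(\cdot,\alpha)$ never meets $V$ on $[r_1,r_2]$. Then $r\mapsto v(r,\alpha)-V(r)$ is continuous and nowhere zero on $[r_1,r_2]$, hence has a constant sign there, and it suffices to rule out both sign cases for all $\alpha$ beyond a threshold $\alpha_0$ depending only on $r_1,r_2,f$.

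First I would record the preparatory facts. By Theorem~\ref{senthm}, $v(\cdot,\alpha)\to V$ in $C^2(\overline A)$ as $\alpha\to\infty$, since $\overline A$ is a compact subset of $(0,\infty)$; and from the asymptotics of $V$ in Theorem~\ref{senthm} together with $(r^{N-1}V')'=-r^{N-1}f(V)<0$ one sees that $V$ is decreasing on $(0,\infty)$, so $V\ge V(r_2)>u_0+1$ on $[r_1,r_2]$. Combined with the $C^2$ convergence this gives $v(r,\alpha)>u_0$ on $[r_1,r_2]$ once $\alpha\ge\alpha_0$; then, by Lemma~\ref{convexlem}, $f''>0$ on $(u_0,\infty)$, which contains the values of both $V$ and $v(\cdot,\alpha)$ over $\overline A$, so $f$ is convex on the closed interval spanned by $V(r)$ and $v(r,\alpha)$ for every $r\in[r_1,r_2]$. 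Writing $Q_w(\xi):=\int_A(|\nabla\xi|^2-f'(w)\xi^2)\,dx$ for $\xi\in C^{0,1}_{\mathrm c}(A)$, the instability of $V$ on $A$ provides a $\varphi_0\in C^{0,1}_{\mathrm c}(A)$ with $Q_V(\varphi_0)<0$; since $f'(v(\cdot,\alpha))\to f'(V)$ uniformly on $\overline A$ we have $Q_{v(\cdot,\alpha)}(\varphi_0)\to Q_V(\varphi_0)$, hence $Q_{v(\cdot,\alpha)}(\varphi_0)<0$ as well after enlarging $\alpha_0$ (still a function of $r_1,r_2,f$ only, as $V$ and $\varphi_0$ are determined by these).

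Next, the two sign cases. If $v(r,\alpha)>V(r)$ on $[r_1,r_2]$, set $w:=v(\cdot,\alpha)-V$, which is $C^2$ and strictly positive on $\overline A$; subtracting $-\Delta V=f(V)$ from $-\Delta v=f(v)$ and using the tangent-line inequality for the convex $f$ at the smaller value $V$ gives $-\Delta w=f(v(\cdot,\alpha))-f(V)\ge f'(V)\,w$ in $A$, so $w$ is a positive supersolution of $-\Delta-f'(V)$ on $A$. Testing $-\Delta w\ge f'(V)w$ against $\xi^2/w$ for $\xi\in C^{0,1}_{\mathrm c}(A)$ and bounding $2\nabla\xi\cdot\tfrac{\xi}{w}\nabla w\le|\nabla\xi|^2+\tfrac{\xi^2}{w^2}|\nabla w|^2$ (the boundary terms vanish since $\xi$ has compact support in $A$ and $w$ is bounded away from $0$ on $\overline A$) yields $Q_V(\xi)\ge0$ for all such $\xi$, contradicting $Q_V(\varphi_0)<0$. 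If instead $v(r,\alpha)<V(r)$ on $[r_1,r_2]$, set $w:=V-v(\cdot,\alpha)>0$ on $\overline A$; the tangent-line inequality at the smaller value $v(\cdot,\alpha)$ now gives $-\Delta w=f(V)-f(v(\cdot,\alpha))\ge f'(v(\cdot,\alpha))\,w$, so $w$ is a positive supersolution of $-\Delta-f'(v(\cdot,\alpha))$ on $A$, and the same test forces $Q_{v(\cdot,\alpha)}(\xi)\ge0$, contradicting $Q_{v(\cdot,\alpha)}(\varphi_0)<0$. In either case we reach a contradiction, so $v(\cdot,\alpha)$ must intersect $V$ on $[r_1,r_2]$ for every $\alpha>\alpha_0$.

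I do not expect a genuine obstacle here; the one point requiring care is the bookkeeping of which operator the supersolution belongs to — convexity always produces $f'$ evaluated at the \emph{smaller} of $V$ and $v(\cdot,\alpha)$, so one tests against $-\Delta-f'(V)$ (using the hypothesis on $V$) when $v(\cdot,\alpha)>V$, and against $-\Delta-f'(v(\cdot,\alpha))$ (using the instability transferred to $v(\cdot,\alpha)$ by $C^2$-closeness) when $v(\cdot,\alpha)<V$. The ``positive supersolution $\Rightarrow$ nonnegative quadratic form'' step is the standard Allegretto--Piepenbrink/Picone fact and may be cited or supplied in the one line indicated above.
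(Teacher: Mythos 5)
Your proposal is correct and follows essentially the same route as the paper: transfer the instability inequality from $V$ to $v(\cdot,\alpha)$ via the $C^2_{\mathrm{loc}}$ convergence of Theorem \ref{senthm}, use monotonicity and Lemma \ref{convexlem} to get convexity of $f$ above $u_0$ on the annulus, and then rule out a sign-definite difference by the positive-supersolution test $\xi^2/w$. The paper compresses the two sign cases into a ``without loss of generality'' (having prepared both $Q_V(\varphi)<0$ and $Q_{v(\cdot,\alpha)}(\varphi)<0$ exactly for this purpose), whereas you spell out which linearized operator is used in each case; the content is the same.
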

\begin{proof}
Since $V$ is the radial singular solution and unstable in $B_{r_2}\setminus  B_{r_1}$, there exists $\varphi\in C^{0,1}_{\mathrm{c}}(B_{r_2}\setminus  B_{r_1})$ so that 
\begin{equation*}
    \int_{B_{r_2}\setminus  B_{r_1}}|\nabla \varphi|^2-f'(V)\varphi^2<0.
\end{equation*}
In addition, thanks to Theorem \ref{senthm} and Lemma \ref{basiclem}, there exists $\alpha_0>0$ depending only on $r_1$, $r_2$ and $f$ such that 
\begin{equation*}
    \int_{B_{r_2}\setminus  B_{r_1}}|\nabla \varphi|^2-f'(v(r,\alpha))\varphi^2<0 \hspace{4mm}\text{for any $\alpha>\alpha_0$}
\end{equation*}
and $V(r)>u_0$, $v(r,\alpha)>u_0$ for any $r\le r_2$ and $\alpha>\alpha_0$. 

We assume by contradiction that the sign of $w:=v(r,\alpha)-V$ does not change in $B_{r_2}\setminus B_{r_1}$. Without loss of generality, we assume that $w>0$ in $B_{r_2}\setminus B_{r_1}$. Then, it follows from the fact that $V(r)>u_0$ in $r\le r_2$ and Lemma \ref{convexlem} that
\begin{equation*}
    -\Delta w=f(v(r,\alpha))-f(V)\ge f'(V)w.
\end{equation*}
Thus, we have
\begin{align*}
    \int_{B_{r_2}\setminus B_{r_1}}f'(V)\varphi^2\,dx&\le   \int_{B_{r_2}\setminus B_{r_1}} \nabla w\cdot \nabla(\varphi^2/w)\,dx\\
    &=\int_{B_{r_2}\setminus B_{r_1}}-\frac{|\nabla w|^2}{w^2}\varphi^2+2\frac{\varphi}{w}\nabla\varphi\cdot\nabla w\,dx\\
    &\le \int_{B_{r_2}\setminus B_{r_1}} |\nabla \varphi|^2\,dx,
\end{align*}
which is a contradiction.
\end{proof}

\section{Bifurcation structure}
In this section, we prove Theorem \ref{mainthms}. As a corollary, we obtain Theorem \ref{main1} and Corollary \ref{maincor}. We first prove the following
\begin{theorem}
\label{Ithm}
We assume that \eqref{asf0}, \eqref{asf1}, \eqref{asF1} and (A) are satisfied, where (A) is that in Theorem \ref{mainthms}. Then, the bifurcation diagram is of Type I.
\end{theorem}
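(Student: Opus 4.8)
The plan is to prove that the Morse index $m(u(\cdot,\alpha))$ of the regular solution along the bifurcation branch blows up, $m(u(\cdot,\alpha))\to\infty$ as $\alpha\to\infty$, and then to combine this with $\lambda(\alpha)\to\lambda_{*}$ and $u(\cdot,\alpha)\to U_{*}$ in $C^2_{\mathrm{loc}}(0,1]$ from Theorem \ref{singularthm} to conclude that the curve must turn infinitely many times as it approaches $\lambda_{*}$, i.e.\ is of Type I.

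The first step is to manufacture, from hypothesis $(A)$, the instability of $V$ on a fixed finite family of annuli. By Corollary \ref{usecor} there is a sequence $r_{n}\downarrow 0$ such that $V$ is unstable in each annulus $A_{n}:=B_{r_{n}}\setminus B_{r_{n+1}}$, and these annuli are pairwise disjoint. Fix $m\in\mathbb{N}$ and pick $\varphi_{1},\dots,\varphi_{m}\in C^{0,1}_{\mathrm c}(B_{1})$ with $\mathrm{supp}\,\varphi_{n}\subset A_{n}$ and $\int_{A_{n}}(|\nabla\varphi_{n}|^{2}-f'(V)\varphi_{n}^{2})<0$. Since $v(\cdot,\alpha)\to V$ in $C^{2}_{\mathrm{loc}}(0,\infty)$ (Theorem \ref{senthm}) and each $\overline{A_{n}}$ is a fixed compact subset of $\{0<|x|\}$, the potentials $f'(v(\cdot,\alpha))$ converge to $f'(V)$ uniformly on $\bigcup_{n\le m}\overline{A_{n}}$, so there is $\alpha_{m}>0$ with $\int_{A_{n}}(|\nabla\varphi_{n}|^{2}-f'(v(\cdot,\alpha))\varphi_{n}^{2})<0$ for all $n\le m$ and $\alpha>\alpha_{m}$. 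As the supports are disjoint, the quadratic form $Q_{v(\cdot,\alpha)}$ is negative on the whole $m$-dimensional span of $\{\varphi_{1},\dots,\varphi_{m}\}$. Writing $u(r,\alpha)=v(\sqrt{\lambda(\alpha)}\,r,\alpha)$, where $\sqrt{\lambda(\alpha)}$ is the first zero of $v(\cdot,\alpha)$ and $\sqrt{\lambda(\alpha)}>r_{1}$ for $\alpha$ large, and using the scale invariance of the Morse index, one gets $m(u(\cdot,\alpha))\ge m$ for $\alpha>\alpha_{m}$; hence $m(u(\cdot,\alpha))\to\infty$. (Equivalently, feeding the instability of $V$ on $A_{n}$ into Lemma \ref{interlem} gives that $u(\cdot,\alpha)$ and $U_{*}$ intersect arbitrarily many times for large $\alpha$, and one could argue with intersection numbers instead.)

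It then remains to convert ``$m(u(\cdot,\alpha))\to\infty$ together with $\lambda(\alpha)\to\lambda_{*}$'' into Type I. The Morse index of each regular solution is finite (the potential $\lambda(\alpha)f'(u(\cdot,\alpha))$ is bounded on $B_{1}$); it can change only at parameters where the Dirichlet linearization $-\Delta-\lambda(\alpha)f'(u(\cdot,\alpha))$ is degenerate; these degeneracies coincide with the critical points of $\lambda$; and, by Sturm oscillation theory for the linearized equation, the Morse index increases by one there. Consequently there are infinitely many turning points, their locations march off to $+\infty$ (they cannot accumulate at a finite $\bar\alpha$, since then $m(u(\cdot,\bar\alpha))$ would be infinite), and between consecutive ones $\lambda$ is strictly monotone with alternating direction while the turning values converge to $\lambda_{*}$; a short bookkeeping argument then yields that $\lambda(\alpha)-\lambda_{*}$ changes sign for arbitrarily large $\alpha$, which together with $\lambda(\alpha)\to\lambda_{*}$ is precisely Type I.

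I expect the main obstacle to be this last step: making the correspondence between degeneracies of the linearized operator, sign changes of $\lambda'$, the monotone growth of $m(u(\cdot,\alpha))$, and the non-accumulation of turning points fully rigorous \emph{without} assuming analyticity of $f$ (which, unlike in part $(B)$, is not among the hypotheses here). This is exactly the role of Proposition \ref{mainprop}, obtained in the spirit of \cite{NS,kuma2024}. A minor, routine point is the uniformity in the Morse-index step: because the instability of $V$ is concentrated in annuli shrinking to the origin, one must fix finitely many of them before letting $\alpha\to\infty$, so the natural output is the limit $m(u(\cdot,\alpha))\to\infty$ rather than $m(u(\cdot,\alpha))$ being already infinite (which, for a regular solution, it never is).
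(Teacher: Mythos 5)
Your first step (Morse index blow-up along the branch, via the instability annuli of Corollary \ref{usecor} and the $C^{2}_{\mathrm{loc}}$ convergence of Theorem \ref{senthm}, with disjointly supported radial test functions) is sound and close in spirit to what the paper does. The genuine gap is in the conversion step. First, ``infinitely many turning points together with $\lambda(\alpha)\to\lambda_{*}$'' does \emph{not} imply that the curve oscillates around $\lambda_{*}$: for instance $\lambda(\alpha)=\lambda_{*}-(2+\sin\alpha)/\alpha$ has infinitely many turning points and converges to $\lambda_{*}$ while never crossing it, so your ``short bookkeeping argument'' cannot exist in the generality you invoke; Type I requires that $\lambda(\alpha)$ actually meets $\lambda_{*}$ for arbitrarily large $\alpha$, and this must be produced by a different mechanism. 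Second, the rigor you defer (degeneracies of the linearization coincide with zeros of $\lambda'$, the index changes by exactly one there, turning points do not accumulate) is not covered by Proposition \ref{mainprop}: that proposition is proved under hypothesis $(B)$ and concludes $\lambda'(\alpha)\neq 0$ for large $\alpha$, i.e.\ it serves the Type II/III part; under $(A)$ no such statement is available (and its conclusion would be false, since Type I forces infinitely many turns). Also, at a turning point the Morse index changes by $\pm 1$, not necessarily $+1$, and without analyticity the set of degenerate parameters need not be discrete, so even the ``infinitely many turning points'' intermediate claim is not fully justified as written.

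The paper avoids all of this by never passing through the Morse index of the regular solutions: it uses Corollary \ref{usecor} together with the intersection Lemma \ref{interlem} to show that the number of intersections of $v(\cdot,\alpha)$ with the singular solution $V$ on $[0,\min\{\sqrt{\lambda(\alpha)},\sqrt{\lambda_{*}}\}]$ tends to infinity, and then converts unbounded intersection numbers \emph{directly} into infinitely many solutions of $\lambda(\alpha)=\lambda_{*}$ by an argument as in \cite[Lemma 5]{Mi2015} (roughly: new intersections can only be created through the common zero level at the boundary, which forces crossings of $\lambda_{*}$). Your parenthetical remark about arguing with intersection numbers instead is exactly the right track, but the crux of the proof is precisely that conversion from intersection numbers to crossings of $\lambda_{*}$, which your proposal leaves unsupplied.
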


\begin{proof}
Thanks to Theorem \ref{singularthm}, it is sufficient to prove that $\lambda(\alpha)$ oscillates infinitely many times around $\lambda_{*}$, where $\lambda(\alpha)$ and $\lambda_{*}$ are those in Theorem \ref{singularthm}. Let $v(r,\alpha)$ be the solution of \eqref{gv} and $V$ be the radial singular solution of \eqref{singulareq}. We remark that $v(\sqrt{\lambda(\alpha)},\alpha)=0$ and $V(\sqrt{\lambda_{*}})=0$. Then, thanks to Lemma \ref{basiclem}, Lemma \ref{interlem} and Corollary \ref{usecor}, the intersection number of $v(r,\alpha)$ and $V$
on $[0,\min\{\sqrt{\lambda(\alpha)},\sqrt{\lambda}_{*}\}]$ converges to infinity as $\alpha\to\infty$. It implies that $\lambda(\alpha)$ intersects with $\lambda^{*}$ infinitely many times as $\alpha\to\infty$ (this assertion can be proved rigorously by using an argument similar to that of \cite[Lemma 5]{Mi2015}).
Thus, we obtain the conclusion.
\end{proof}
Next, we prove the following
\begin{proposition}
\label{mainprop}
We assume that $f$ satisfies \eqref{asf0}, \eqref{asf1}, \eqref{asF1} and (B), where (B) is that in Theorem \ref{mainthms}.
Then, there exists $\alpha_0>0$ such that if $(\lambda(\alpha), u(r,\alpha))$ is a solution of \eqref{gelfand} satisfying $\alpha>\alpha_{0}$, we have $\lambda'(\alpha)\neq 0$.    
\end{proposition}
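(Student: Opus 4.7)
The plan is to parametrize the bifurcation curve via the first positive zero of the unscaled solution $v(\cdot,\alpha)$ of \eqref{gv}: set $\lambda(\alpha)=R(\alpha)^2$ where $v(R(\alpha),\alpha)=0$. Differentiating this identity in $\alpha$ and using $v_r(R(\alpha),\alpha)<0$ yields
\[
\lambda'(\alpha) \;=\; -\frac{2 R(\alpha)\,v_\alpha(R(\alpha),\alpha)}{v_r(R(\alpha),\alpha)},
\]
so the proposition reduces to proving $\psi(R(\alpha),\alpha)\neq 0$ for $\alpha$ large, where $\psi:=v_\alpha$ satisfies the radial linearization
\[
-\psi''-\frac{N-1}{r}\psi'=f'(v(\cdot,\alpha))\,\psi \quad\text{on } (0,R(\alpha)), \qquad \psi(0)=1,\ \psi'(0)=0.
\]
Letting $Z(\alpha)$ denote the number of zeros of $\psi(\cdot,\alpha)$ in $(0,R(\alpha))$, $\lambda'(\alpha)=0$ is exactly the event that a zero of $\psi$ sits at $r=R(\alpha)$; equivalently, $Z$ is discontinuous at $\alpha$. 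It therefore suffices to show $Z$ is eventually constant.

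\textbf{Stability near the origin.} Under (B), Corollary \ref{usecor} supplies $r_0>0$ such that $V$ is stable in $B_{r_0}$ and $m(V)<\infty$. Lemma \ref{separationlem} gives the monotonicity $v(r,\alpha)<v(r,\beta)$ on $(0,r_0)$ for $\alpha_*<\alpha<\beta$, so $\psi(r,\alpha)=\partial_\alpha v(r,\alpha)\geq 0$ on $(0,r_0)$ for $\alpha>\alpha_*$. Since $\psi(0,\alpha)=1>0$, uniqueness for the linear IVP forbids $\psi(\cdot,\alpha)$ from vanishing in $(0,r_0]$ (any interior zero of a nonnegative solution would be a double zero, forcing $\psi\equiv 0$), so $\psi(\cdot,\alpha)>0$ on $(0,r_0]$. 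In particular all zeros of $\psi(\cdot,\alpha)$ lie in $[r_0,R(\alpha))$, and their total number is uniformly bounded by the radial Morse index of $V$, which is finite under (B).

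\textbf{Limit analysis and contradiction.} Suppose for contradiction that $\alpha_n\to\infty$ with $\psi(R(\alpha_n),\alpha_n)=0$. By Theorem \ref{senthm}, $v(\cdot,\alpha_n)\to V$ in $C^2_{\mathrm{loc}}(0,\infty)$ and $R(\alpha_n)\to\sqrt{\lambda_*}$, so on any compact $[r_0,r_1]\subset[r_0,\sqrt{\lambda_*})$ the coefficient $f'(v(\cdot,\alpha_n))$ converges uniformly to $f'(V)$. Normalizing $\widetilde\psi_n:=\psi(\cdot,\alpha_n)/\psi(r_0,\alpha_n)$, a subsequence converges in $C^2_{\mathrm{loc}}([r_0,\sqrt{\lambda_*}])$ to a nontrivial $\Psi$ solving the linearization at $V$ with $\Psi(r_0)=1$ and $\Psi(\sqrt{\lambda_*})=0$; on $(0,r_0]$ the analogous normalization converges in $C^2_{\mathrm{loc}}((0,r_0])$ and glues $\Psi$ into a radial weak zero-eigenfunction of $-\Delta-\lambda_*f'(U_*)$ on $B_{\sqrt{\lambda_*}}$ vanishing on the boundary. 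Using the refined asymptotic of $f'(U_*)$ from Theorem \ref{singularthm} together with Proposition \ref{criticalprop}, one upgrades $\Psi$ to an element of $H^1_0(B_{\sqrt{\lambda_*}})$, hence a genuine kernel element of the linearized operator at $U_*$. Finally, strict monotonicity of $v(\cdot,\alpha)$ in $\alpha$ on $(0,r_0)$ from Lemma \ref{separationlem}, combined with a Picone-type identity pairing $\Psi$ against $\partial_\alpha v$, forces the relevant eigenvalue to move strictly with $\alpha$, contradicting its vanishing for infinitely many $\alpha_n$.

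\textbf{Main obstacle.} The most delicate step will be the limit analysis at $r=0$: one must show that $\Psi$ has finite Dirichlet energy across the singular point and that matching the bulk and near-origin normalizations yields a genuine nontrivial global kernel element. This is precisely where the quantitative refinement of the singular profile from Theorem \ref{singularthm} and the critical-case Hardy inequality from Proposition \ref{criticalprop} become essential.
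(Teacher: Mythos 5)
Your setup (reducing $\lambda'(\alpha)\neq 0$ to the non-vanishing of $v_\alpha$ at the first zero $R(\alpha)$, using Corollary \ref{usecor} and Lemma \ref{separationlem} to get $v_\alpha\ge 0$ near the origin, and passing to a limit with the critical Hardy inequality) overlaps with ingredients the paper also uses, but the decisive step of your argument is missing. Your contradiction is supposed to come from producing a nontrivial radial kernel element $\Psi$ of the linearization at $(\lambda_*,U_*)$ and then asserting that "a Picone-type identity pairing $\Psi$ against $\partial_\alpha v$ forces the relevant eigenvalue to move strictly with $\alpha$." The existence of such a kernel element is not, by itself, contradictory — the paper's proof explicitly allows the limiting equation \eqref{weq} to have a nontrivial solution and treats that case — and no mechanism is given for why the eigenvalue must move monotonically in $\alpha$; in regime (A) the analogous degeneracies do occur along a sequence $\alpha_n\to\infty$, so any such monotonicity must use (B) quantitatively, which your sketch does not do. The paper's actual route is a second-order argument in $\alpha$: assuming $\dot\lambda(\alpha_n)=0$ with the "wrong" sign of $\ddot\lambda(\alpha_n)$, testing the equation for $\ddot u_n$ against $\dot u_n$ gives $0=\ddot\lambda_n\mathcal{A}_n+\mathcal{B}_n$ with $\mathcal{A}_n=\int_{B_1}f(u_n)\dot u_n$, $\mathcal{B}_n=\int_{B_1}\lambda_n f''(u_n)\dot u_n^3$; then the refined asymptotics of Theorem \ref{asythm} are used twice — the upper bound together with Proposition \ref{criticalprop} to get compactness of $w_n=|x|^{\frac{N-2}{2}}\dot u_n/\lVert\cdot\rVert$ in a two-dimensional formulation, and the lower bound \eqref{lower-es} to show $z=r^{-\varepsilon}w$ is bounded below near $0$, whence $\mathcal{B}_n$ (suitably normalized) diverges to $+\infty$ while $\mathcal{A}_n$, computed via the boundary identity $\int_{B_1}f(u_n)\dot u_n=\frac{N\omega_N}{2\lambda_n}u_n'(1)\dot u_n'(1)$, has a definite sign determined by $w'(1)$. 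This forces $(-1)^{i-1}\ddot\lambda_n>0$, contradicting the choice of the sequence; no analogue of this quantitative step appears in your proposal.

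Two further points. First, your reduction "it suffices to show $Z$ is eventually constant" is not valid: a simple zero of $\psi(\cdot,\alpha)$ can sit exactly at $r=R(\alpha)$ for an isolated $\alpha$ without changing the interior zero count, so constancy of $Z$ does not exclude $\lambda'(\alpha)=0$. Second, the claim that the number of zeros of $\psi(\cdot,\alpha)$ in $[r_0,R(\alpha))$ is "uniformly bounded by the radial Morse index of $V$" is plausible but unproven as stated; the convergence $v(\cdot,\alpha)\to V$ is only in $C^2_{\mathrm{loc}}(0,\infty)$, so transferring the finite Morse index of $V$ to a uniform zero bound for $v_\alpha$ requires an argument (compare the direction proved in Proposition \ref{apenprop}), and in any case such a bound alone would not yield the conclusion for the reason above.
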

\begin{proof}
Let us denote by $\cdot$ the differentiation with respect to $\alpha$. Let $i\in \mathbb{N}$ be a number depending only on $f$ to be defined later. Then, the conclusion is obtained if the following assertion holds: there exists $\alpha_0>0$ such that if $\alpha_0<\alpha$ and $\dot{\lambda}(\alpha)=0$ are satisfied, then $(-1)^{i-1}\ddot{\lambda}(\alpha)>0$. We assume by contradiction that there exist a sequence of solutions $(\lambda(\alpha_n), u(r,\alpha_n)))$ of \eqref{gelfand} such that $\dot{\lambda}(\alpha_n)=0$, $(-1)^{i-1}\ddot{\lambda}(\alpha_n)\le 0$ and $\alpha_n\to \infty$ as $n\to\infty$. Now, we denote by 
$u_n(r):=u(r,\alpha_n)$ and $\lambda_n=\lambda(\alpha_n)$. Then, by a direct calculation, we verify that 
\begin{equation}
\label{equdot}
\left\{
\begin{alignedat}{4}
 &-\Delta \dot{u}_{n}= \lambda_n f'(u_{n})\dot{u}_n,\hspace{14mm}0<r\le 1,\\
&\dot{u}_{n}(0)=1, \hspace{1mm} \frac{d}{dr} \dot{u}_{n}(0)=0, \hspace{1mm} \dot{u}_{n}(1)=0
\end{alignedat}
\right.
\end{equation}
and
\begin{equation}
\label{equdots}
\left\{
\begin{alignedat}{4}
-\Delta \ddot{u}_{n}&=\lambda_n f'(u_n)\ddot{u}_{n}+
\lambda_n  f''(u_n) \dot{u}^{2}_{n}+
\ddot{\lambda}_n f(u_n)
\hspace{4mm}&&\text{in $B_1$},\\
\ddot{u}_{n}&=0 &&\text{on $\partial B_1$}.\notag
\end{alignedat}
\right.
\end{equation}
Here, we set $v(r,\alpha):= u(\lambda(\alpha)^{-1/2}r,\alpha)$ and $v_n:=v(r,\alpha_n)$. Then, $v$ is a solution of \eqref{gv}. Therefore, thanks to Theorem \ref{senthm}, we can deduce that $\lim_{n\to \infty}\lambda_n=\lambda_{*}$ with some $\lambda_{*}>0$ and $v_{n}\to V$ in $C^2_{\mathrm{loc}}(0,\infty)$, where $V$ is the radial singular solution of \eqref{singulareq}. In addition, by Lemma \ref{basiclem}, Corollary \ref{usecor} and Lemma \ref{separationlem}, we have $\dot{v}_{n}\ge 0$ and $v_n\le V$ for all $0<r<r_{0}$ and $n$ sufficiently large
with some $r_0>0$ independent of $n$. Moreover, thanks to Theorem \ref{asythm}, there exist $\delta>0$ and $r_1<r_0$ independent of $n$ such that we have
\begin{equation*}
    f'(v_k)\le f'(V)\le \frac{(N-2)^2}{4}|x|^{-2}+ \frac{1-2\delta}{4}|x\log x|^{-2}
    \hspace{4mm}\text{in $B_{r_1}$}
\end{equation*}
for all $n$ sufficiently large. Thus thanks to the above estimate and Theorem \ref{singularthm},
we obtain
\begin{align}
\label{estimateI}
\begin{split}
    \lambda_n f'(u_n)&\le \frac{(N-2)^2}{4}|x|^{-2}+ \frac{1-2\delta}{4}|x(\log x+\frac{\log \lambda_n}{2})|^{-2}+C\\
    &\le \frac{(N-2)^2}{4}|x|^{-2}+\frac{1-\delta}{4}|x(\log x)|^{-2}+C
    \end{split}
\end{align}
for all $0<r<1$ and $n$ sufficiently large, where $C$ is a positive constant independent of $n$. Moreover, since $\dot{v}_n=\dot{u}_n(\lambda_n^{-1/2}r,\alpha_n)$, it follows that $\dot{u}_n\ge 0$ for $0<r<\frac{1}{2}\lambda_{*}^{-1/2}r_0$ for all $n$ sufficiently large. Now, let $\varepsilon>0$ be a small constant defined later. Then, Theorem \ref{asythm} implies that 
\begin{equation}
\label{lower-es}
  \lambda_{*} f'(U_*)\ge \frac{(N-2)^2-4\varepsilon^2}{4|x|^2} 
\end{equation}
for $0<r<\hat{r}$ with some $0<\hat{r}<\frac{1}{2}\lambda_{*}^{-1/2}r_0$.

We define 
\begin{equation*}
    w_n=\frac{|x|^{\frac{N-2}{2}} \dot{u}_{n}}{\lVert |x|^{\frac{N-2}{2}} \dot{u}_{n} \rVert_{L^2(B^{2}_{1})}},
\end{equation*}
where $B^{2}_{1}$ is the $2$-dimensional unit ball. Then, $w_n$ satisfies 
\begin{equation*}
\left\{
\begin{alignedat}{4}
 -\Delta w_{n}&= \left(\lambda_nf'(u_n)-\frac{(N-2)^2}{4|x|^2}\right) w_{n}, \hspace{4mm}&&\text{in $B^{2}_1$},\\
w_{n}&=0 &&\text{on $\partial B^{2}_1$}
\end{alignedat}
\right.
\end{equation*}
and $w_{n}(r)\ge 0$ in $0<r<\hat{r}$. Hence, it follows from \eqref{estimateI}, Proposition \ref{criticalprop} and a density argument that
\begin{align*}
  \delta\int_{B^{2}_{1}} |\nabla w_n|^2\,dx&\le\int_{B^{2}_{1}} |\nabla w_n|^2\,dx- \frac{1-\delta}{4}\int_{B^{2}_{1}}|x\log x|^{-2} w^2_{n}\,dx\\
    &=\int_{B^{2}_{1}}\left(\lambda_nf'(u_n)-\frac{(N-2)^2}{4|x|^2}-\frac{1-\delta}{4}|x\log x|^{-2}\right) w^2_{n}\,dx\\
&\le C\int_{B^{2}_1} w^2_{n}\,dx=C.
\end{align*}
As a result, there exists $w\in H^{1}_{0}(B^2_{1})$ such that $w_n\rightharpoonup w$ in $H^{1}(B^{2}_1)$ by taking a subsequence if necessary. In particular, we get $w_n\to w $ in $L^q(B^{2}_1)$ for any $1\le q<\infty$. Thus, we have $\lVert w\rVert_{L^2(B^2_{1})}=1$. Moreover, by Theorem \ref{singularthm}, the elliptic regularity theory (see \cite{gil}) and a diagonal argument, we can deduce that $w_n\to w$ in $C^2_\mathrm{loc}(0,1]$ as $n\to\infty$ and $w\in C^2_{\mathrm{loc}}(0,1]$ is a radial function satisfying $w\ge 0$ for $0<r<\hat{r}$ and
\begin{equation}
\label{weq}
\left\{
\begin{alignedat}{4}
 &w''+\frac{w'}{r}+ \left(\lambda_{*}f'(U_{*})-\frac{(N-2)^2}{4r^2}\right) w=0, \hspace{4mm}\text{$0<r<1$},\\
&w(1)=0.
\end{alignedat}
\right.
\end{equation}
If the equation \eqref{weq} does not have a non-trivial solution $\hat{w}\in C^2(0,1]$ satisfying $\hat{w}\ge 0$ in $(0,t)$ with some $t>0$, we obtain a contradiction. On the other hand, if \eqref{weq} has a non-trivial solution $\hat{w}\in C^2(0,1]$ satisfying $\hat{w}\ge0$ in $(0,t)$ with some $t>0$, 
we verify that $\hat{w}'(1)w'(1)>0$. Indeed, if $w(s)=0$ with $s\in (0,1)$, then it follows from Green's identity that
\begin{equation*}
    2\pi s (w'(s)\hat{w}(s)-w(s)\hat{w}'(s))=0.
\end{equation*}
By Hopf's lemma, we have $w'(s)\neq 0$. Therefore, we have $\hat{w}(s)=0$. By using a similar argument again, we verify that $w(s)=0$ if and only if $\hat{w}(s)=0$. Hence, since $w$ and $\hat{w}$ are non-trivial solution satisfying $w\ge 0$ and $\hat{w}\ge 0$ in $(0,\min\{\hat{r},t\})$, it follows from Hopf's lemma that $w'(1)$ and $\hat{w}'(1)$ has the same sign. Therefore, the result follows. We choose a non-trivial solution $\hat{w}$ satisfying $\hat{w}>0$ in $(0,t)$ with some $t>0$, and define $i\in \{1,2\}$ so that $(-1)^{i-1}\hat{w}'(1)>0$. By the above discussion, $i$ is defined 
regardless of the choice of $\hat{w}$. Moreover, we have $(-1)^{i-1}w'(1)>0$.

We define $z:=r^{-\varepsilon}w$. We can verify that $z\ge 0$ for $0<r<\hat{r}$ and $z$ satisfies
\begin{equation*}
    z''+\frac{1+2\varepsilon}{r}z'+ \left(\lambda_{*}f'(U_{*})-\frac{(N-2)^2-4\varepsilon^2}{4r^2}\right) z=0.
\end{equation*}
Here, we claim that $z'\le 0$ for $0<r<\hat{r}$. Indeed, if not, there exists $0<t<\hat{r}$ such that $z'(t)>c_1$ with some $c_1>0$. From the above equation and \eqref{lower-es}, we have
$(r^{1+2\varepsilon}z')'\le 0$ for all $0<r<\hat{r}$. Hence, $z'(s)\ge s^{-(1+2\varepsilon)}t^{1+2\varepsilon}z'(t)\ge s^{-(1+2\varepsilon)}c_2$ for all $0<s<t$, where $c_2=t^{1+2\varepsilon}c_1$. Integrating the inequality on $(r,t)$, we have $-z(r)\ge -z(t)+\frac{c_2}{2\varepsilon}(r^{-2\varepsilon}-t^{-2\varepsilon})\to\infty$ as $r\to 0$, which contradicts the fact that $z\ge 0$ for $0<r<\hat{r}$.  

Hence, since $\lVert w\rVert_{L^2(B_{1}^2)}=1$, we obtain $z>z_0$ for $0<r<\frac{\hat{r}}{2}$ with some $z_0>0$. Thanks to \eqref{equdot} and \eqref{equdots}, we have
\begin{equation*}
    0=\int_{B_1}\ddot{\lambda}_n f(u_n)\dot{u}_n\,dx+\int_{B_1}\lambda_n f''(u_n)\dot{u}^{3}_n\,dx=\ddot{\lambda}_n\mathcal{A}_n+ \mathcal{B}_n,
\end{equation*}
where
\begin{align*}
\mathcal{A}_n:= \int_{B_{1}}f(u_n)\dot{u}_n\,dx \hspace{4mm}\text{and}\hspace{4mm}\mathcal{B}_n:=\int_{B_1}\lambda_n f''(u_n)\dot{u}^{3}_n\,dx.
\end{align*}
We claim that there exists $n_0$ such that 
$(-1)^{i}A_n>0$, $B_n>0$ for all $n>n_0$, which contradicts the equality above.
Indeed, we recall that $w_n\to w $ in $L^q(B^{2}_1)$ for any $1\le q<\infty$ and $v_n\to V_{*}$ in $C^2_{\mathrm{loc}}(0,1]$. Therefore, by Theorem \ref{singularthm} and Fatou's Lemma, we get
\begin{align*}
\liminf_{n\to\infty}\frac{\mathcal{B}_n}{\lVert |x|^{\frac{N-2}{2}}\dot{u}_n\rVert_{L^2(B^2_{1})}^3}&=\liminf_{n\to\infty}\int_{B_{1}}\lambda_n |x|^{\frac{3(2-N)}{2}}f''(u_n)w^3_{n}\,dx\\
    &= \lambda_{*} \int_{B_{\frac{\hat{r}}{2}}} |x|^{\frac{3(2-N)}{2}}f''(U_{*})w^3\,dx-C\\
    &\ge \lambda_{*}\int_{B_{\frac{\hat{r}}{2}}} |x|^{\frac{3(2-N)}{2}+3\varepsilon}f''(U_{*})z_{0}^3\,dx-C\\
    &=\infty \hspace{4mm}\text{by taking $\varepsilon$ such that $0<\varepsilon<\frac{2}{3}$.}
\end{align*}
Moreover, if follows from \cite[Theorem 1.5]{korbook} that
\begin{equation*}
    \int_{B_1}f(u_n)\dot{u}_n=\frac{N\omega_{N}}{2\lambda_n}u'_{n}(1)\dot{u}_{n}'(1),
\end{equation*}
where $\omega_{N}$ is the volume of $B_1$. Hence, we have
\begin{align*}
\lim_{n\to\infty}\frac{\mathcal{A}_n}{\lVert |x|^{\frac{N-2}{2}}\dot{u}_n\rVert_{L^2(B^2_{1})}}&=\lim_{n\to\infty}\frac{1}{\lVert |x|^{\frac{N-2}{2}}\dot{u}_n\rVert_{L^2(B^2_{1})}}\int_{B_{1}} f(u_n)\dot{u}_n\,dx\\
&=\lim_{n\to\infty} \frac{N\omega_N u'_{n}(1)\dot{u}_{n}'(1)}{2\lambda_n{\lVert |x|^{\frac{N-2}{2}}\dot{u}_n\rVert_{L^2(B^2_{1})}}}\\
&=\frac{N\omega_{N}U'_{*}(1)}{2\lambda_{*}}\lim_{n\to\infty} (w_{n}'(1)-\frac{N-2}{2}w_{n}(1))\\
&=\frac{N\omega_{N}U'_{*}(1)w'(1)}{2\lambda_{*}}=(-1)^{i}c
\end{align*}
with some positive constant $c>0$.
Thus, we get the claim.  
\end{proof}

\begin{proof}[Proof of Theorem \ref{main1}, Theorem \ref{mainthms} and Corollary \ref{maincor}.]
We first claim that Theorem \ref{main1} and Corollary \ref{maincor} are obtained by using Theorem \ref{mainthms} and Lemma \ref{examplelem}. Thus, it suffices to prove Theorem \ref{mainthms}. We also remark that Theorem \ref{mainthms} (i) is already proved as Theorem \ref{Ithm}. 

Now, we prove Theorem \ref{mainthms} (ii). Thanks to Theorem \ref{singularthm} and Proposition \ref{criticalprop}, we verify that there exists $r_0$ such that the radial singular solution $U_{*}$ of \eqref{gelfand} with $\lambda=\lambda_{*}$ is stable in $B_{r_0}$ and $U_{*}(r_0)>u_0+1$, where $u_0>0$ is that in Lemma \ref{convexlem}. Let $c\ge U_{*}(r_0)$ and we denote by $r_1$ the first zero of $U_{*}-c$. We define $\hat{U}_{*}(r):=U_{*}(r_1r)-c$ and $\hat{\lambda}_{*}=\lambda_{*}r_{1}^2$.
Then, thanks to Lemma \ref{basiclem} and Lemma \ref{convexlem}, we can deduce that $(\hat{\lambda}_{*}, \hat{U}_{*})$ is the radial stable singular solution of \eqref{gelfand} with the non-decreasing and convex nonlinearity $f_{c}(u):=f(c+u)$. Moreover, thanks to Proposition \ref{h1prop}, we can deduce that $\hat{U}_{*}\in H^1_{0}(B_1)$.
Hence, it follows from \cite[Theorem 3.1]{Br} that the bifurcation diagram is of Type II with $f_{c}$. In addition, thanks to Proposition \ref{mainprop}, we can deduce that $\lambda(\alpha)$ is non-degenerate for all $\alpha$ sufficiently large. Hence, the result is satisfied.

Finally we prove Theorem \ref{mainthms} (iii). In this case, since $f$ is analytic on $(-\delta,\infty)$, by the analytic implicit functional theorem, we can deduce that $\lambda(\alpha)$ is analytic, which implies that the set of zeros of $\lambda'$ does not have an accumulation point. Thus, the desired result follows.
\end{proof}

\subsection*{Acknowledgment}
The author would like to thank Professor Michiaki Onodera for his valuable advice to greatly improve the clarity of the presentation. 

\appendix
\section{A similarity between Type II and Type III}
In this appendix, we prove the following
\begin{proposition}
\label{apenprop}
We assume that $f$ satisfies \eqref{asf0} and \eqref{asf1}. We assume in addition that  
$f$ can be analytically extended to $(-\delta,\infty)$ with some $\delta>0$. Moreover, we suppose that the bifurcation diagram is of Type III with $f$. Then, there exists $c_0>0$ such that the bifurcation diagram is of Type II with $f_{c}=f(u+c)$ for all $c>c_0$.
\end{proposition}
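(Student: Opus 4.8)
The plan is to follow the proof of Theorem \ref{mainthms}(ii): first show that the Type III hypothesis forces the radial singular solution of \eqref{gelfand} to be stable in a neighbourhood of the origin, and then translate it so that \cite[Theorem 3.1]{Br} applies. By Theorem \ref{senthm}, under \eqref{asf0}--\eqref{asf1} the problem \eqref{gelfand} has the unique radial singular solution $(\lambda_*,U_*)$, with $\lambda(\alpha)\to\lambda_*$ and $u(\cdot,\alpha)\to U_*$ in $C^2_{\mathrm{loc}}(0,1]$ as $\alpha\to\infty$, and $U_*\in H^1_0(B_1)$ by Proposition \ref{h1prop}. Since $f$ is analytic on $(-\delta,\infty)$, the curve $\alpha\mapsto\lambda(\alpha)$ is analytic, hence not locally constant; thus its turning points are isolated, and ``Type III'' means that there are finitely many of them while $\lambda(\alpha)\to\lambda_*$.

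The key step is to prove $m(U_*)<\infty$. Suppose not. From \eqref{r-as} we have $\lambda_*f'(U_*)\le Cr^{-2}$ near $0$, so a standard localization (IMS-type) argument shows that if $U_*$ were stable in some ball $B_{r_0}$ then $m(U_*)$ would be finite; hence $U_*$ is unstable in $B_{r_0}$ for every $r_0>0$. Because $N\ge 3$ and $\lambda_*f'(U_*)\le Cr^{-2}$, one may cut off a radial destabilizing test function near the origin without essentially changing the energy $\int_{B_1}(|\nabla\varphi|^2-\lambda_*f'(U_*)\varphi^2)$, which upgrades this to: $U_*$ is unstable in an annulus $B_{r_0}\setminus B_{\varepsilon(r_0)}$ for every $r_0>0$. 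Extracting pairwise disjoint such annuli $B_{r_n}\setminus B_{r_{n+1}}$, $r_n\downarrow0$ (equivalently, $V$ is unstable in a sequence of shrinking annuli, as supplied by Corollary \ref{usecor} under hypothesis (A)), we are in the setting of the proof of Theorem \ref{Ithm}: by Lemma \ref{basiclem}, Lemma \ref{interlem} and the argument there (cf.\ \cite[Lemma 5]{Mi2015}), the number of intersections of $u(\cdot,\alpha)$ with $U_*$ tends to $\infty$ as $\alpha\to\infty$, so $\lambda(\alpha)$ crosses $\lambda_*$ infinitely often and therefore has infinitely many turning points. This contradicts Type III, so $m(U_*)<\infty$.

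Running the same localization/disjoint-annuli argument in reverse, $m(U_*)<\infty$ yields $r_0>0$ with $U_*$ stable in $B_{r_0}$; shrinking $r_0$ we may also assume $U_*(r_0)>u_0+1$, where $u_0$ is as in Lemma \ref{convexlem}. Put $c_0:=U_*(r_0)$, and for $c>c_0$ let $r_1\in(0,r_0)$ be the unique radius with $U_*(r_1)=c$ ($U_*$ being strictly decreasing), $\hat U_*(r):=U_*(r_1r)-c$ and $\hat\lambda_*:=\lambda_*r_1^2$. A direct computation gives that $(\hat\lambda_*,\hat U_*)$ is a radial singular solution of \eqref{gelfand} with $f_c(u)=f(u+c)$; since $f_c$ again satisfies \eqref{asf0}--\eqref{asf1}, it is \emph{the} radial singular solution by Theorem \ref{senthm}. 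Moreover $\hat U_*\in H^1_0(B_1)$ by Proposition \ref{h1prop}, the change of variables $x\mapsto r_1x$ carries the stability of $U_*$ in $B_{r_1}\subseteq B_{r_0}$ into the stability of $\hat U_*$ in $B_1$ (that is, $m(\hat U_*)=0$), and $f_c$ is non-decreasing and convex on $[0,\infty)$ by Lemma \ref{convexlem} since $c>u_0$. Therefore \cite[Theorem 3.1]{Br} applies and the bifurcation diagram of \eqref{gelfand} with $f_c$ is of Type II for every $c>c_0$, which is the claim.

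The main obstacle is the key step: deducing from $m(U_*)=\infty$ that $U_*$ is unstable in a sequence of shrinking annuli, i.e.\ reproving the content of Corollary \ref{usecor}(A) with only the crude bound $\lambda_*f'(U_*)=O(r^{-2})$ from \eqref{r-as}, rather than the refined asymptotics of Theorem \ref{singularthm}. Everything else is either quoted (Theorems \ref{senthm} and \ref{Ithm}, Proposition \ref{h1prop}, Lemma \ref{interlem}, and the Brezis--V\'azquez theorem) or a routine scaling/cutoff computation.
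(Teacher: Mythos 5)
Your proof is correct in substance, and the second half (translating by $c_0=U_*(r_0)$ with $U_*(r_0)>u_0+1$, checking convexity/monotonicity via Lemma \ref{convexlem}, $H^1_0$ membership via Proposition \ref{h1prop}, and invoking \cite[Theorem 3.1]{Br}) coincides with the paper's. But your key step is genuinely different from the paper's. The paper does not argue through oscillation at all: it uses the analyticity hypothesis together with the analytic bifurcation theory of \cite{Dan} and \cite{BD} to show that the Morse index $m(u(\cdot,\alpha))$ can change only at turning points, so Type III yields a uniform bound $m(u(\cdot,\alpha))\le M$; a Fatou-type limit argument (via Theorem \ref{senthm} and Lemma \ref{convexlem}) then gives $m(U_*)\le M$, and \cite[Proposition 1.5.1]{Dup} converts finiteness of the Morse index into stability of $U_*$ in a small ball. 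You instead run the machinery of Theorem \ref{Ithm} contrapositively: if $U_*$ were unstable in every ball, the crude bound $\lambda_*f'(U_*)=O(r^{-2})$ from \eqref{r-as} and $N\ge 3$ allow a logarithmic/capacity cutoff near the origin, producing disjoint shrinking unstable annuli, and then Lemma \ref{interlem} plus the intersection-counting argument forces $\lambda(\alpha)$ to cross $\lambda_*$ infinitely often, contradicting Type III; this in effect reproves the relevant half of Corollary \ref{usecor} without \eqref{asF1}, which is exactly the ``obstacle'' you flag, and your sketch of it (IMS localization plus the $\varepsilon^{N-2}$ cutoff estimates) does go through. What each route buys: yours does not use analyticity anywhere (only ``finitely many turning points''), so it actually proves a slightly stronger statement, while the paper's route avoids redoing the oscillation machinery and yields the quantitative bound $m(U_*)\le M$. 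One simplification you should make: as written you pass through $m(U_*)$ in both directions, which forces you to invoke ``stable in a ball implies finite Morse index,'' the thinner direction of the \cite{Dup}-type equivalence; this is unnecessary, since the construction only ever needs stability of $U_*$ in some ball $B_{r_0}$ with $U_*(r_0)>u_0+1$. Arguing the dichotomy directly --- either $U_*$ is stable in some small ball (then translate and apply \cite[Theorem 3.1]{Br} as above), or it is unstable in every ball (then your annuli/intersection argument contradicts Type III) --- removes the Morse index from the proof entirely.
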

\begin{proof}
Since $f$ is analytic and $\lambda$ is parameterized by $\alpha$, we can deduce that the bifurcation curve is analytic, which has no self-intersection and no secondary bifurcation point.
Hence, by using the method of the proof of \cite[Theorem 1]{Dan} and the analytic bifurcation theory provided in \cite[Section 2.1]{BD}, we can verify that the $m(u,\alpha)$ changes at $\alpha$ if $\lambda(\alpha)$ turns at $\alpha$. Thus, we have $m(u(r,\alpha))\le M$ with some $M\in\mathbb{N}$ since the bifurcation diagram is of Type III. 

Here, we claim that $m(U_{*})\le M$. Indeed, if $m(U_{*})\ge M+1$, there exists a $M+1$ dimensional subspace $X_{M+1}\subset C^{1}_{\mathrm{c}}
(B_1)$ so that $Q_{U_{*}}(\xi)<0$ for all $\xi\in X_{M+1}\setminus\{0\}$. Here, we mention that since $m(u(r,\alpha))\le M$, we get a contradiction if we obtain the following assertion: $Q_{u(r,\alpha)}(\xi)<0$ for all $\xi\in X_{M+1}\setminus\{0\}$ and $\alpha>0$ sufficiently large. In order to obtain the assertion, we suppose to the contrary, i.e., there exist a sequence $\alpha_n$ and $\xi_n \in X_{M+1}\setminus\{0\}$ with $\lVert \xi_n \rVert_{C^1} =1$
such that $\alpha_n\to\infty$ and $Q_{u(r,\alpha_n)}(\xi_n)\ge 0$ as $u\to\infty$. Since $X_{M+1}\subset C_{\mathrm{c}}^{1}(B_1)$ is finite dimensional, we can deduce that there exists $\xi\in X_{M+1}$ with $\lVert \xi \rVert_{C^1} =1$ such that $\xi_n\to \xi$ in $C^{1}(B_1)$.
Therefore, thanks to Lemma \ref{convexlem}, Theorem \ref{senthm} and Fatou's Lemma, we obtain 
\begin{align*}
    Q_{U_{*}}(\xi)&=\int_{B_1}|\nabla \xi|^2 - \lambda_{*} f'(U_{*})\xi^2\,dx\\&\ge \liminf_{n\to\infty}\int_{B_1}|\nabla \xi_n|^2 - \lambda(\alpha_n) f'(u(r,\alpha_n))\xi_{n}^2\,dx\ge 0,
\end{align*}
which is a contradiction. 

Thus, it follows that $m(U_{*})\le M$. Hence, thanks to \cite[Proposition 1.5.1]{Dup}, there exists $r_0>0$ such that $U_{*}$ is stable in $B_{r_0}$ and $U_{*}(r_0)>u_0+1$, where $u_0$ is that in Lemma \ref{convexlem}. Therefore, by an argument similar to that in the proof of Theorem \ref{mainthms} (ii), we obtain the conclusion. 
\end{proof}

\bibliographystyle{plain}
\bibliography{classification_of_bifurcation}

\end{document}